\newtheorem{theorem}{Theorem}[section]
\newtheorem{lemma}[theorem]{Lemma}
\newtheorem{corollary}[theorem]{Corollary}
\newtheorem{proposition}[theorem]{Proposition}
\theoremstyle{definition}
\newtheorem{definition}[theorem]{Definition}
\newtheorem{example}[theorem]{Example}
\newtheorem{remark}[theorem]{Remark}
\newtheorem{question}[theorem]{Question}
\newtheorem*{ack}{Acknowledgment}
\DeclareMathOperator{\T}{\mathsf{T}}
\DeclareMathOperator{\Hom}{Hom}
\DeclareMathOperator{\Int}{Int}
\DeclareMathOperator{\Sym}{Sym}
\DeclareMathOperator{\Frac}{Frac}
\DeclareMathOperator{\Bin}{Bin}
\DeclareMathOperator{\dash}{-}
\DeclareMathOperator{\supp}{supp}
\DeclareMathSymbol{\Gamma}{\mathalpha}{operators}{0}
\newcommand{\N}{\mathbb{N}}
\newcommand{\Z}{\mathbb{Z}}
\newcommand{\Q}{\mathbb{Q}}
\newcommand{\R}{\mathbb{R}}
\newcommand{\fm}{\mathfrak{m}}
\newcommand{\bx}{\mathbf{x}}
\newcommand{\bz}{\mathbf{0}}
\newcommand{\ot}{\otimes}
\newcommand{\z}{\zeta}
\newcommand{\la}{\langle}
\newcommand{\ra}{\rangle}
\newcommand{\ba}{\mathbf{a}}
\DeclareMathAlphabet{\pazocal}{OMS}{zplm}{m}{n}
\newcommand{\surj}{\twoheadrightarrow}
\newcommand{\cupd}{\cup_1{\dash}d}
\newsavebox{\@brx}
\newcommand{\llangle}[1][]{\savebox{\@brx}{\(\m@th{#1\langle}\)}%
  \mathopen{\copy\@brx\kern-0.5\wd\@brx\usebox{\@brx}}}
\newcommand{\rrangle}[1][]{\savebox{\@brx}{\(\m@th{#1\rangle}\)}%
  \mathclose{\copy\@brx\kern-0.5\wd\@brx\usebox{\@brx}}}
\newcommand{\arxiv}[1]
{\texttt{\href{http://arxiv.org/abs/#1}{arXiv:#1}}}
\newcommand{\arxi}[1]
{\texttt{\href{http://arxiv.org/abs/#1}{arxiv:}}
\texttt{\href{http://arxiv.org/abs/#1}{#1}}}
\newcommand{\abs}[1]{\left| #1 \right|}
\numberwithin{table}{section}
\numberwithin{equation}{section}
\def\norms#1#2{\left\| #1 \right\|_{\lower 1ex\hbox{$\scriptstyle #2 $}}}
\def\namedlabel#1#2{\begingroup
    #2%
    \def\@currentlabel{#2}%
    \phantomsection\label{#1}.\endgroup
}
\def\@tocline#1#2#3#4#5#6#7{\relax
  \ifnum #1>\c@tocdepth 
  \else
    \par \addpenalty\@secpenalty\addvspace{#2}%
    \begingroup \hyphenpenalty\@M
    \@ifempty{#4}{%
      \@tempdima\csname r@tocindent\number#1\endcsname\relax
    }{%
      \@tempdima#4\relax
    }%
    \parindent\z@ \leftskip#3\relax \advance\leftskip\@tempdima\relax
    \rightskip\@pnumwidth plus4em \parfillskip-\@pnumwidth
    #5\leavevmode\hskip-\@tempdima
      \ifcase #1
       \or\or \hskip 1em \or \hskip 2em \else \hskip 3em \fi%
      #6\nobreak\relax
    \dotfill\hbox to\@pnumwidth{\@tocpagenum{#7}}\par
    \nobreak
    \endgroup
  \fi}
\definecolor{dkgreen}{RGB}{0,100,0}
\definecolor{dkbrown}{RGB}{139,69,19}
\begin{document}
 
\date{December 6, 2021}
    
\title[Cup-one algebras, binomial operations, and Massey products]%
{Differential graded algebras, Steenrod cup-one products, 
binomial operations, and Massey products}

\author{Richard~D.~Porter$^1$}
\author{Alexander~I.~Suciu$^{1,2}$}
\address{$^1$Department of Mathematics,
Northeastern University,
Boston, MA 02115, USA}
\email{\href{mailto:r.porter@northeastern.edu}{r.porter@northeastern.edu}}

\email{\href{mailto:a.suciu@northeastern.edu}{a.suciu@northeastern.edu}}
\thanks{$^2$Supported in part by the Simons Foundation Collaboration Grants 
for Mathematicians \#354156 and \#693825}

\subjclass[2020]{Primary
16E45, 
55N45. 
Secondary
13F20, 
46L87,  
55P62, 
55S05, 
55S10, 
55S30, 
55U10. 
}

\keywords{Differential graded algebras, cochain algebras, 
Steenrod cup-$i$ products, 
binomial rings, rings of integer-valued polynomials, 
non-commutative differential forms, Massey products}

\dedicatory{In memory of \c{S}tefan Papadima}

\begin{abstract}
Motivated by the construction of Steenrod cup-$i$ products in 
the singular cochain algebra of a space and in the 
algebra of non-commutative differential forms, we define a
category of binomial cup-one differential graded algebras 
over the integers and over prime fields of positive characteristic. 
The Steenrod and Hirsch identities bind the cup-product, the 
cup-one product, and the differential in a package that we 
further enhance with a binomial ring structure arising from 
a ring of integer-valued rational polynomials. This structure 
allows us to define the free binomial cup-one differential
graded algebra generated by 
a set and derive its basic properties. It also provides the 
context for defining restricted triple Massey products, which 
have a smaller indeterminacy than the classical ones, and
hence, give stronger homotopy type invariants.
\end{abstract}

\maketitle

\setcounter{tocdepth}{1}
\tableofcontents

\section{Introduction}
\label{sect:intro}

\subsection{Algebraic models for spaces}
\label{intro:models}

From its beginnings, algebraic topology has relied on algebraic models 
to capture topological properties of spaces. Some of the earliest such 
models are the cochain algebra of a simplicial complex and the de Rham 
algebra of smooth differential forms on a smooth manifold. The first 
construction was extended to cellular cochains on a CW-complex 
and to singular cochains on an arbitrary topological space. The 
latter construction evolved into Sullivan's algebra of 
piecewise polynomial forms on a topological space, 
which provides a natural setting for rational homotopy theory, 
\cite{Sullivan}. 

An important distinction in this context is between the commutative 
differential graded algebra (cdga) models such as the de Rham and 
Sullivan algebras and the non-commu\-tative differential graded algebra 
(dga) models such as the aforementioned cochain algebras, as well 
as the algebras of non-commutative differential forms. Although 
in some ways more difficult to handle, these dga models 
contain extra information, exemplified by the Steenrod 
cup-$i$ products, which provide explicit witnesses to the 
non-commutativity of the usual cup-products and lead to
the Steenrod cohomology operations.

We revisit here some of these classical ideas, many of which 
go back to the work of Steenrod \cite{Steenrod} and Hirsch \cite{Hirsch}, 
and define several categories of differential graded algebras (over the 
integers and over prime fields of positive characteristic) with some 
extra structure, coming from either the cup-one products, 
or from a binomial ring structure, or both, bound together 
by suitable compatibility conditions. This structure 
allows us to define the free binomial cup-one differential
graded algebra generated by a set, which will be developed 
in \cite{Porter-Suciu-2021-1, Porter-Suciu-2021-2}  into a theory 
of $1$-minimal models over the integers and over $\Z_p$.  This 
framework also provides the context for defining restricted triple 
Massey products, which is a particular case of a more general 
construction that will be developed in \cite{Porter-Suciu-2021-3}. 

\subsection{Differential graded algebras and $\cup_1$-products}
\label{intro:dga-cup-one}
We now outline the main constructions and results of this paper. 
We start in section \ref{sect:dga} by reviewing some basic notions 
regarding graded algebras and differential graded algebras, including 
Massey's definition of higher order products \cite{Massey-1958}. 
In section \ref{sect:steenrod-cup} we enhance (differential) graded 
algebras with an additional structure, working by analogy with 
Steenrod's cup-one products in cochain algebras.

We define a {\em graded algebra with cup-one products}\/ over a coefficient 
ring $R$ as a graded $R$-algebra $A$ with cup-one product maps, 
$\cup_1\colon A^p \otimes_R A^1 \to A^{p}$ for $p=1,2$ such that
the {\em left Hirsch identity},
\begin{equation}
\label{eq:intro-hirsch-1c}
(a \cup b) \cup_1 c = a \cup (b\cup_1 c) + (a \cup_1 c)\cup b \, ,
\end{equation} 
is satisfied for all $a,b,c\in A^1$, and such that 
the cup-one map $\cup_1\colon A^1 \otimes_R A^1 \to A^1$ gives 
the $R$-module $R \oplus A^1$ the structure of a commutative ring.

Suppose now that $(A,d)$ is a dga such that the underlying algebra $A$ 
satisfies the above two conditions, and also comes endowed with a map
$\cup_1 \colon A^1 \otimes A^2 \to A^2$. 
We then say that $(A,d)$ is a 
 {\em differential graded algebra with $\cup_1$-products}\/ if the following 
{\em Steenrod identity}\/ is satisfied for all $a,b\in A^1$:
\begin{equation}
\label{eq:intro-steenrod-1c}
d(a \cup_1 b)  = -a\cup b- b\cup a  + da \cup_1 b - a\cup_1 db \, .
\end{equation}

Note that by the left Hirsch identity if $da$ is a sum of cup products
then the first three terms in equation \eqref{eq:intro-steenrod-1c}
belong to $D^2(A)$, the set of elements in $A^2$ that can be 
written as sums of cup products of elements in $A^1$.
This raises the question of whether there are
conditions under which the term $a\cup_1 db$  can also
be written as a sum of cup products of elements in $A^1$.
To this end, we assume there is an operation on $D^2(A)$ 
satisfying
$(u \cup v)\circ (w \cup z)=(u \cup_1 w) \cup (v \cup_1 z)$, 
and consider the identities 
\begin{align}
\label{eq:intro-right-hirsch}
a \cup_1 (b\cup c) &= da \circ (b \cup c) - (b\cup c) \cup_1 a,
\\
\label{eq:intro-c1d}
d(a \cup_1 b) &= - a \cup b - b \cup a
+ da \cup_1 b + db \cup_1 a - da \circ db,
\end{align}
for $a,b,c \in A^1$ and $da,db \in D^2(A)$. We call the first one 
the {\em right Hirsch identity} and the second one the 
{\em $\cupd$ formula}. If \eqref{eq:intro-steenrod-1c} and 
\eqref{eq:intro-right-hirsch} hold, then it follows that
\eqref{eq:intro-c1d} is also satisfied. 
A differential graded $R$-algebra $(A,d)$ with cup-one 
products and differential that satisfies the 
$\cupd$ formula is called a {\em $\cup_1$-differential 
graded algebra}.

We show in sections \ref{subsec:cochains-cup1d} and 
\ref{subsec:noncomm-cup1d}
that the right Hirsch identity, and hence the $\cupd$ formula,
holds in cochain algebras and in
algebras of non-commutative differential forms, respectively.
The property that the $\cupd$ formula involves only cup products
of elements in $A^1$ is a critical component in the proofs
in sections \ref{sect:free-bin-dga} and \ref{sect:zp-binomial}
of the properties of free $\cup_1$-differential graded algebras.

\subsection{Cochain algebras and non-commutative differential forms}
\label{intro:cochain-nc}
We have two basic motivating examples for this definition. 
The first one, discussed in section \ref{sect:cochain}, is 
the singular cochain algebra, $A=C^*(X;R)$, 
of a topological space $X$, with coefficients in a commutative 
ring $R$.  In this case, one can take the map $\cup_1$ to be the 
cup-one product defined by Steenrod in \cite{Steenrod}, with the  
Steenrod and Hirsch identities established in \cite{Steenrod} and \cite{Hirsch},
respectively. 

The second motivating example, discussed in section \ref{sect:non-comm},  
is provided by the algebra of non-commutative 
differential forms, $\Omega^*(A;R)$, on an $R$-algebra $A$, 
as constructed by Karoubi in \cite{Karoubi-tams1995}.  
In this context, the Steenrod identities were  
established by Battikh \cite{Battikh-2007} and the Hirsch identities by 
Abbassi \cite{Abbassi-2013}. 
As we shall see in \cite{Porter-Suciu-2021-1}, sequences of non-commutative 
Hirsch extensions also satisfy the aforementioned axioms.

\subsection{Binomial cup-one dgas}
\label{intro:bin-co-one}
Inspired by the work of Ekedahl \cite{Ekedahl-2002}, Elliott \cite{Elliott-2006}, 
and others we bring in section \ref{sect:bin-cup1}  the notion of 
binomial ring and tie it up to the previous examples  
in order to define binomial cup-one algebras and dgas. 

A commutative ring $A$ is a {\em binomial ring}\/ if $A$ is torsion-free 
as a $\Z$-module, and the elements 
$\binom{a}{n} \coloneqq a(a-1)\cdots (a-n+1)/n!$ 
lie in $A$ for every $a\in A$ and every $n>0$, thereby defining 
maps $\zeta_n\colon A\to A$, $a\mapsto \binom{a}{n}$.
Combining the notions of cup-one (differential) graded algebras 
and binomial rings we arrive at the notion of a binomial cup-one 
(differential) graded algebra. We define binomial $\cup_1$-dgas as 
graded algebras with cup-one products endowed with a differential 
satisfying the Leibniz and $\cupd$ formulas.

Our goal here is to derive a formula for the differential of $\zeta_n(a)$ for 
$a$ a cocycle in $A^1$ and $n\ge 1$, in the setting where $A$ is a 
binomial $\cup_1$-dga.  We achieve this in Theorem \ref{thm:cup-zeta}, 
where we show that 
\begin{equation}
\label{eq:intro-dcup}
d\zeta_n (a) = - \sum_{k = 1}^{n-1}\zeta_{k}(a) \cup \zeta_{n-k}(a) \, .
\end{equation}
In fact, we give in \eqref{eq:d-zeta} a formula for the differential of 
an arbitrary product of the form $\zeta_{i_1}(a_1)\cdots \zeta_{i_n}(a_n)$ 
with $a_k\in Z^1(A)$ and $i_k\ge 0$. 

\subsection{Free binomial graded algebras}
\label{intro:free}
A basic example of binomial cup-one algebras is provided by the free 
binomial graded algebras constructed in section \ref{sect:free-bin-dga}. 
Let $\Int(\Z^{X})$ be the ring of integrally-valued polynomials with variables 
from $X$ and with rational coefficients. This is a binomial ring, freely 
generated as a $\Z$-module by products of polynomials of the form 
$\binom{x}{n}$, with $x\in X$ and $n\in \N$.  We define the 
{\em free binomial graded algebra}\/ on $X$, denoted $\T(X)$, 
as the tensor algebra on the ideal of all polynomials in $\Int(\Z^X)$ 
without constant term.

We develop here some of the basic properties of these algebras. 
For instance, we show in Theorem \ref{thm:cup1-t} that $(\T(X),d)$, 
with differential $d$ defined by $d(x) = 0$ for all $x \in X$, together 
with the $\cupd$ formula and the graded Leibniz rule, is a binomial 
$\cup_1$-dga. Moreover, if $A$ is an any binomial $\cup_1$-dga, 
we show in Corollary \ref{cor:freebindga} that 
there is a bijection between binomial $\cup_1$-dga maps 
$\T(X)\to A$ and set maps $X\to Z^1(A)$.

Minimal models are then instances of binomial $\cup_1$-dga algebras 
that are free as graded algebras with cup-one products. 
This definition captures the properties of cup and cup-one 
products that we will use in \cite{Porter-Suciu-2021-1} to 
construct $1$-minimal models over the integers. 

\subsection{Binomial cup-one dgas over $\Z_p$}
\label{intro:zp}

In section \ref{sect:zp-binomial} we extend the results in the
previous two sections to binomial rings and $\cup_1$-dgas 
over $\Z_p$, the prime field of characteristic $p>0$.
We say that  a commutative $\Z_p$-algebra $A$ is a 
{\em $\Z_p$-binomial algebra}\/ if 
$a(a-1)\cdots(a-n+1)= 0$ for all integers $n \ge p$ and all $a \in A$. 
The binomial operations $\z_n(a)$ are defined as for algebras over $\Z$, 
but now only for $1 \le n \le p-1$.  

With this definition in hand, we proceed as before and introduce 
the notions of {\em $\Z_p$-binomial\/ $\cup_1$-differential graded algebra}\/ 
and {\em free $\Z_p$-binomial graded algebra}. If $(A,d)$ is a 
$\Z_p$-binomial $\cup_1$-dga, we show in 
Theorem \ref{thm:cup-zeta-p} that 
\begin{equation}
\label{eq:intro-dcup-p}
d\z_n(a) = - \sum_{k=1}^{n-1}\z_k(a) \cup \z_{n-k}(a)
\end{equation}
for all $a\in A^1$ with $da =0$ and for $2\le n \le p-1$. These notions 
are further developed in \cite{Porter-Suciu-2021-1} into a theory 
of $1$-minimal models over $\Z_p$.

\subsection{Ordinary and restricted triple Massey products}
\label{intro:massey}

We use the constructions above to analyze in some detail special 
cases of the ordinary Massey products, and also define restricted triple 
Massey products which have smaller indeterminacy than the
usual products. For instance, if $A$ is a binomial $\cup_1$-dga 
over $\Z$, it follows from the existence of binomials, 
$\z_i(a)$, satisfying \eqref{eq:intro-dcup} that the 
$n$-fold Massey products $\la u, \ldots, u\ra$ 
are defined and contain $0$, for all $u\in H^1(A)$ and $n\ge 3$.

Given a cocycle $a \in C^1(X;\Z_3)$,  formula
\eqref{eq:intro-dcup-p} is used in the proof of
Proposition \ref{prop:mtp-3} to show that the triple product 
$\langle [a], [a], [a] \rangle \in H^2(X;\Z_3)$ may be represented 
by the cocycle $- a\cup \z_2(a) - \z_2(a) \cup a$. We show that 
this triple product contains the negative of the mod $3$
Bockstein applied to $[a]$.  More generally, we 
show in Theorem \ref{thm:mod-p-bock}
that $p$-fold Massey products 
$\langle u, \dots , u \rangle \in H^2(X;\Z_p)$, for odd primes $p$, 
are defined and contain the negative of the 
mod $p$ Bockstein applied
to $u$. Since the indeterminacy of these $p$-fold
products in the cohomology of the Eilenberg--MacLane
space $K(\Z_p,1)$ is zero, it follows in general that the
$p$-fold product $\la u, \ldots , u\ra$ does not necessarily
contain zero.

The category of binomial $\cup_1$-dgas provides a natural framework for 
defining generalized Massey products. In section \ref{sect:massey} 
we consider a special case of such cohomology operations, 
which we call {\em restricted triple Massey products}. 
Furthermore, we relate these products and the binomial 
operations $\z_n$ in a binomial $\cup_1$-dga. 

More precisely, if $A$ is a binomial $\cup_1$-dga and  
$u_1, u_2$ are elements in $H^1(A)$ with $u_1 \cup u_2 = 0$, 
we define the restricted Massey product 
$\langle u_1, u_1, u_2 \rangle_r$ to be the
subset of $H^2(A)$ consisting of elements
$[a_1 \cup a_{12} - \z_2(a_1)\cup a_2]$, where
each $a_i$ is a cocycle with $[a_i]=u_i$ and
$da_{12} = a_1 \cup a_2$. We show that these restricted 
triple Massey products are homotopy invariants with generally 
smaller indeterminacy than the classical Massey products. 

We conclude with a construction of a family of spaces whose 
homotopy types cannot be distinguished by the usual cup products 
and triple Massey products, yet can be distinguished using our 
restricted Massey products. The theory of generalized Massey
products continues the program initiated in
\cite{Porter-Suciu-2020} and is developed more 
fully in \cite{Porter-Suciu-2021-2, Porter-Suciu-2021-3}, 
along with further applications.

\section{Differential graded algebras}
\label{sect:dga}

We start with some basic definitions.
Throughout this section we work over a fixed coefficient ring $R$, 
assumed to be commutative and with unit $1=1_R$.  

\subsection{Graded algebras}
\label{subsec:ga}
For us, an {\em $R$-algebra}\/ is an associative, unital algebra $A$ 
over a ring $R$. That is to say, $A$ is a ring (with multiplicative 
identity $1_A$) which is also an $R$-module, such that the ring and 
module additions coincide, and scalar multiplication satisfies 
$r(ab)=(ra)b=a(rb)$ for all $r\in R$ and $a,b\in A$.  Consequently, 
the ring multiplication map, $A\times A\to A$, is $R$-bilinear, and 
thus factors through a map $A\otimes_{R} A\to A$.  Moreover, $A$ 
comes endowed with a structure map, $R\to A$, given by 
$r\mapsto r\cdot 1_A$. 
We will assume throughout that the structure map is injective, 
so that $R$ may be viewed as a subring of $A$.

A {\em graded algebra}\/ over $R$ is an $R$-algebra $A$ 
such that the underlying $R$-module is a direct sum of 
$R$-modules, $A=\bigoplus_{i\ge 0} A^i$, and such that the
multiplication map $A\otimes_{R} A\to A$ sends 
$A^i \otimes_{R} A^j$ to $A^{i+j}$.

Observe that $A^0$ is a subring of $A$; thus, $1_{A^0}$  
coincides with $1_{A}$, and so $1=1_A$ is a homogeneous 
element of degree $0$. 
Consequently, $R$ may be viewed as a subring of $A^0$, and the 
graded pieces $A^i$ may be viewed as $A^0$-modules. 
We say that $A$ is {\em connected}\/ if the structure map 
$R\to A$ maps $R$ isomorphically to $A^0$. 

We refer to the multiplication maps $\cup\colon A^i\otimes_{R} A^j\to A^{i+j}$, 
given by $\cup(a\otimes b)=a b$ as the {\em cup-product maps}.
We may refer to the elements of $A^{i}$ as $i$-cochains, and, when 
there are other products involved, we will sometimes 
write $a\cup b$ for the product $ab$. 
We say that $A$ is {\em graded commutative} (for short, $A$ is a cga) if 
$ab=(-1)^{\abs{a}\abs{b}} ba$ for all homogeneous elements $a, b\in A$ 
where $\abs{a}$, $\abs{b}$ denotes the degrees of $a$ and $b$; respectively.
Some basic examples to keep in mind are:
\begin{enumerate}
\item \label{alg1} The exterior algebra $\bigwedge V$, where $V$ is 
a free $R$-module with generators in odd degrees.
\item \label{alg2} The symmetric algebra $\Sym(V)$, where $V$ is 
a free $R$-module with generators in even degrees.
\item \label{alg3} The tensor algebra $T(V)$, where $V$ is 
a free $R$-module of rank at least $2$.
\end{enumerate}
All three examples are graded algebras; the first two are graded-commutative, 
while the third one is not.

A morphism of graded algebras is a map $\varphi\colon A\to B$ of 
$R$-algebras preserving degrees, that is, $\varphi(A^i)\subset B^i$ 
for all $i$. The set of all such morphisms is denoted by $\Hom(A,B)$.

\subsection{Differential graded algebras}
\label{subsec:dga}

A {\em differential graded algebra}\/ over a ring $R$  (for short, a dga) 
is a graded $R$-algebra $A=\bigoplus_{i\ge 0} A^i$  endowed with a 
degree $1$ map, $d\colon A\to A$, such that $d^2=0$ and the following 
``graded Leibniz rule" is satisfied, 
\begin{equation}
\label{eq:derivation}
d(a \cup b) = da\cup b + (-1)^{\abs{a}} a\cup db
\end{equation}
for all homogenous elements $a, b\in A$. 

We denote by $[a]\in H^i(A)$ the cohomology class of a cocycle 
$a\in Z^i(A)$.  The graded $R$-module $H^*(A)$ inherits from 
$A$ the structure of a graded algebra, with multiplication given 
by $[a]\cdot [b]=[ab]$. We say that a dga $A$ is {\em graded commutative} 
(for short, $A$ is a cdga) if the underlying graded algebra is a cga. 
Clearly, if $A$ is a cdga, the $H^*(A)$ is a cga.

A morphism of differential graded $R$-algebras is an $R$-linear 
map, $\varphi\colon A \to B$, between two dgas which preserves 
the grading and commutes with the respective differentials and products;  
we denote the set of such morphisms by $\Hom( A, B )$.
The induced map in cohomology, $\varphi^*\colon H^*(A)\to H^*(B)$, is a 
morphism of graded $R$-algebras.   
The map $\varphi$  is called a \emph{$q$-quasi-isomor\-phism}\/ (for some 
$q\ge 1$) if the induced homomorphism, $\varphi^*\colon H^i (A) \to H^i(B)$,  
is an isomorphism for $i \le q$ and a monomorphism for $i=q+1$. 
Two dgas are called \emph{$q$-equivalent}\/ if there is a zig-zag 
of $q$-quasi-isomorphisms connecting one to the other.
A dga $(A,d)$ is said to be {\em $q$-formal}\/ if it is $q$-equivalent 
to its cohomology algebra, $H^*(A)$, endowed with the $0$ differential.   

\subsection{Massey products}
\label{subsec:massey-prod}
A well-known obstruction to formality is provided by the higher-order 
Massey products, introduced by W.S.~Massey in \cite{Massey-1958,May-1969}.  
For simplicity, we focus here on Massey triple products of cohomology 
classes in degree $1$, following the approach from \cite{Fenn-Sjerve-1984,Porter}. 

Let $(A,d)$ be an $R$-dga, and let $u_1, u_2, u_3\in H^1(A)$. 
In general, the Massey triple product $\langle u_1, u_2, u_3 \rangle$ 
is defined if $u_1u_2 = u_2 u_3 = 0$. If the product is defined, then there
are cochains $a_1, a_2, a_3, a_{1,2}, a_{2,3}\in A^1$ with the properties
that the $a_i$ are cocycles, $[a_i]= u_i$, and $da_{i,j} = a_i a_j$.
It follows that $a_1 a_{2,3} + a_{1,2}a_3$ is a $2$-cocycle and
$\langle u_1,u_2, u_3 \rangle$ then denotes the subset of
$H^2(A)$ consisting of the cohomology classes 
$[a_1 a_{2,3} + a_{1,2}a_3]$ for some choice of
the cochains $a_i, a_{i,j}$. 
Due to the ambiguity in the choice of the cocycles $a_i$,
and cochains $a_{i,j}$  
the Massey triple product $\langle u_1, u_2, u_3\rangle$ may 
be viewed as a  coset in 
$H^2(A) \slash(u_1 \cup H^1(A)+H^1(A)\cup u_3)$. 
A Massey product is said to {\em vanish}\/ if it is defined 
and  contains the element $0\in H^2(A)$.

If $\varphi\colon A\to B$ is a dga morphism, and 
$\varphi^*\colon H^*(A)\to H^*(B)$ is the induced morphism 
in cohomology, then 
\begin{equation}
\label{eq:massey-func}
\varphi^*(\langle u_1, u_2, u_3\rangle) 
\subseteq \langle \varphi^*(u_1), \varphi^*(u_2), \varphi^*(u_3)\rangle\, .
\end{equation}
In general, this inclusion is strict, but, if $\varphi$ is a $1$-quasi-isomorphism, 
then \eqref{eq:massey-func} holds as equality. 
Now, if $A$ is a dga with differential $d\colon A^1\to A^2$ equal 
to zero, then all Massey triple products $\langle u_1,u_2, u_3 \rangle$ 
vanish. Consequently, Massey triple products are an obstruction to 
$1$-formality. More precisely, if a dga $(A,d)$ is $1$-formal, and 
a triple product $\langle u_1,u_2, u_3 \rangle\subset H^2(A)$ can 
be defined, then it must contain the element $0\in H^2(A)$.

These notions extend to  higher-order Massey products (in degree one).  
Suppose $u_1,\dots , u_n$ ($n\ge 3$) are elements in $H^1(A)$.   
The $n$-fold product $\langle u_1,\dots, u_n\rangle$ is then 
defined to be the set of elements in $H^2(A)$ 
represented by cocycles of the form
$a_{1}a_{2,n}+a_{1,2}a_{3,n}+\cdots+a_{1,n-1}a_{n}$, 
where the $a_i$ are cocycle representatives of the $u_i$,
and for $i<j$ the $a_{i,j}$ are cochains in $A^1$ which satisfy 
$da_{i,j} = a_{i}a_{i+1,j}+ a_{i,i+1}a_{i+2,j}+\cdots+a_{i,j-1}a_{j}$ 
for $1 \le i < j\le n$ and $(i,j)\ne (1,n)$.

In subsequent work \cite{Porter-Suciu-2021-3}, we develop 
a theory within which generalized Massey triple and higher order products
lead to invariants of $1$-equivalence 
for dgas, and apply this to homotopy classification problems
for spaces. Example \ref{ex:mtp-r} illustrates a particular 
instance of this approach.

\section{Steenrod $\cup_i$-products}
\label{sect:steenrod-cup}

In this section, we review the properties of Steenrod $\cup_i$
products and then focus on properties of the $\cup_1$ product of
elements in dimension 1 and 2 to define cup-one differential graded
algebras. 

\subsection{The  $\cup_i$ products}
\label{subsec:cup-i}
We now enrich the notion of dga with some extra structure. 
We start with a definition due to Steenrod \cite{Steenrod}, 
as developed in subsequent work of Hirsch \cite{Hirsch}, 
Kadeishvili \cite{Kadeishvili-2003, Kadeishvili-2004}, 
Saneblidze \cite{Saneblidze}, and Franz \cite{Franz-2003, Franz-2019}. 

\begin{definition}
\label{def:cup-i-algebra}
A {\em dga with Steenrod products}\/ consists of an $R$-dga $(A,\cup, d)$ 
endowed with $R$-linear maps, $\cup_i\colon A^p\otimes_{R}  A^q\to A^{p+q-i}$, 
which coincide with the usual cup product when $i=0$, 
vanish if $p<i$ or $q<i$, and satisfy the identities 
\begin{align}
\label{eq:cup-i-steenrod}
&d(a \cup_i b)  = (-1)^{\abs{a}+\abs{b}-i} a \cup_{i-1} b +
(-1)^{\abs{a}\abs{b}+\abs{a}+\abs{b}} b\cup_{i-1} a 
+ da \cup_i b + (-1)^{\abs{a}} a \cup_i db
\\
\label{eq:cup-i-hirsch}
&(a \cup b) \cup_1 c = a \cup (b\cup_1 c) + 
(-1)^{\abs{b}(\abs{c}-1)} (a \cup_1 c)\cup b 
\end{align}
for all homogeneous elements $a,b,c\in A$.  
\end{definition}
We shall refer to \eqref{eq:cup-i-steenrod} as the ``Steenrod identities" 
and to \eqref{eq:cup-i-hirsch}, with the cup product to the left of the
cup-one product, as the ``left Hirsch identity". 

\begin{lemma}
\label{lem:gr-comm}
If $A$ is a dga with Steenrod products, then its cohomology algebra, $H^*(A)$, 
is graded-commutative.
\end{lemma}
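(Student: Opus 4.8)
The plan is to show that for any two cocycles $a \in Z^p(A)$ and $b \in Z^q(A)$, the cup products $a \cup b$ and $(-1)^{pq} b \cup a$ differ by a coboundary, so that $[a][b] = (-1)^{pq}[b][a]$ in $H^*(A)$. The key tool is the Steenrod identity \eqref{eq:cup-i-steenrod} specialized to $i = 1$, which relates $d(a \cup_1 b)$ to $a \cup_0 b$ and $b \cup_0 a$ (the $i-1 = 0$ terms are ordinary cup products) plus terms involving $da$ and $db$. Since $\cup_0$ is the usual cup product, this is exactly the chain-homotopy-type formula that witnesses graded-commutativity in cohomology.

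Concretely, I would first write down \eqref{eq:cup-i-steenrod} with $i=1$ for homogeneous $a, b$:
\begin{equation*}
d(a \cup_1 b) = (-1)^{\abs{a}+\abs{b}-1} a \cup b + (-1)^{\abs{a}\abs{b}+\abs{a}+\abs{b}} b \cup a + da \cup_1 b + (-1)^{\abs{a}} a \cup_1 db.
\end{equation*}
Then I would impose $da = 0$ and $db = 0$, which kills the last two terms, leaving
\begin{equation*}
d(a \cup_1 b) = (-1)^{\abs{a}+\abs{b}-1} \bigl( a \cup b - (-1)^{\abs{a}\abs{b}} b \cup a \bigr),
\end{equation*}
after checking the sign bookkeeping: $(-1)^{\abs{a}\abs{b}+\abs{a}+\abs{b}} = (-1)^{\abs{a}+\abs{b}-1} \cdot (-1)^{\abs{a}\abs{b}+1} = -(-1)^{\abs{a}+\abs{b}-1}(-1)^{\abs{a}\abs{b}}$, so the second term contributes $-(-1)^{\abs{a}+\abs{b}-1}(-1)^{\abs{a}\abs{b}} b \cup a$, giving the displayed factorization. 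Hence $a \cup b - (-1)^{\abs{a}\abs{b}} b \cup a = \pm d(a \cup_1 b) \in \im d$, so passing to cohomology classes yields $[a \cup b] = (-1)^{\abs{a}\abs{b}}[b \cup a]$, which is precisely graded-commutativity of $H^*(A)$.

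The main obstacle here is not conceptual but purely the sign arithmetic: one must be careful that the exponent $\abs{a}\abs{b} + \abs{a} + \abs{b}$ really does differ from $\abs{a} + \abs{b} - 1$ by $\abs{a}\abs{b} + 1 \pmod 2$, and that the resulting overall sign $(-1)^{\abs{a}+\abs{b}-1}$ is a unit (which it trivially is) so that it can be discarded when taking cohomology. A secondary point worth a remark is that the left Hirsch identity \eqref{eq:cup-i-hirsch} plays no role in this particular lemma — only the Steenrod identity at $i=1$ is needed — and that the vanishing conditions $a \cup_1 b = 0$ when a degree drops below $1$ are not an issue since here $\abs{a}, \abs{b} \ge 1$ for the relevant case (and in degree $0$ everything is central over $R$, so graded-commutativity is automatic). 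I would close by noting this shows $H^*(A)$ is a cga in the sense of Section~\ref{subsec:ga}.
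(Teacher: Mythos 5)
Your proof is correct and follows essentially the same route as the paper: specialize the Steenrod identity \eqref{eq:cup-i-steenrod} to $i=1$ and two cocycles, so that $a\cup b - (-1)^{\abs{a}\abs{b}}\, b\cup a$ is exhibited as $\pm\, d(a\cup_1 b)$. Your sign bookkeeping is in fact slightly more careful than the paper's one-line computation, and your observations that the left Hirsch identity is not needed and that the degree-zero cases cause no trouble are accurate.
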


\begin{proof}
Let $a\in Z^p(A)$ and $b\in Z^q(A)$ be two cocycles.  Then, by \eqref{eq:cup-i-steenrod}, 
$d(a \cup_1 b)  = (-1)^{p+q} \big(a b + (-1)^{pq} b a \big)$. Hence, 
$[a][b] + (-1)^{pq} [b] [a]=0$, and the claim is proved. 
\end{proof}

For $a,b,c\in A^1$, the identities \eqref{eq:cup-i-steenrod} and 
\eqref{eq:cup-i-hirsch} become
\begin{align}
\label{eq:steenrod-1c}
&d(a \cup_1 b)  = -a\cup b- b\cup a  + da \cup_1 b - a\cup_1 db,
\\
\label{eq:hirsch-1c}
&(a \cup b) \cup_1 c = a \cup (b\cup_1 c) + (a \cup_1 c)\cup b .
\end{align} 

In particular, if $a,b\in Z^1(A)$ are $1$-cocyles, then
\begin{equation}
\label{eq:cup-1-1}
d(a \cup_1 b)  = -( a\cup b + b\cup a).
\end{equation}
Thus, the operation 
$\cup_1\colon Z^1(A)\otimes_R  Z^1(A)\to A^2$ 
provides an explicit witness for the non-commutativity 
of the multiplication map 
$\cup\colon Z^1(A)\otimes_R  Z^1(A)\to Z^2(A)$ and shows
that $uv = -vu$ for elements $u,v \in H^1(A)$.  

\begin{remark}
\label{rem:non-assoc}
In general, the $\cup_i$-operations are not associative, even for $i=1$, 
see for instance \cite{Kadeishvili-2003, Kadeishvili-2004,Lawson}. 
Nevertheless, as we shall see in  sections  \ref{sect:cochain}--\ref{sect:non-comm},  
the operation $\cup_1\colon A^1 \otimes_R A^1 \to A^1$ is both associative and 
commutative for our motivating examples.
\end{remark}

\subsection{Graded algebras with cup-one products}
\label{subsec:graded-cup1}

Let $A=\bigoplus_{p\ge 0} A^{p}$ be a graded algebra over a commutative ring $R$.  
Recall that we are assuming that the structure map, $R\to A$, which sends $1_R$ 
to $1_A$, is injective, so that $R$ as a subring of $A^0$. In what follows, we let 
$D^2(A)$ denote the \emph{decomposables}\/ in $A^2$; that is, the $R$-submodule 
of $A^2$ spanned by all elements of the form $a\cup b$, with $a,b\in A^1$.

\begin{definition}
\label{def:gr-cup1}
A  \emph{graded algebra with cup-one products}\/ is a graded $R$-algebra
$A$ with cup-one product maps, $\cup_1\colon A^p \otimes_R A^1 \to A^{p}$ 
for $p=1, 2$, such that
\begin{enumerate}[label=(\roman*), itemsep=2pt]
\item  \label{axi}
The left Hirsch identity \eqref{eq:hirsch-1c} is satisfied for all $a,b,c\in A^1$. 
\item  \label{axii}
The cup-one map $\cup_1\colon A^1 \otimes_R A^1 \to A^1$ gives 
the $R$-submodule $R \oplus A^1\subset A^0\oplus A^1$ the structure 
of a commutative ring.
\end{enumerate}
\end{definition}

In \ref{axii}, the $R$-module $R \oplus A^1$ already 
possesses three partial multiplication maps, given by   
the product in the ring $R$, viewed as a map $R \otimes_R R \to R$, 
and the cup product maps
$A^1 \otimes_R R \to A^1$ and $R \otimes_R A^1 \to A^1$, 
which make $A^1$ into an $R$-bimodule. 
The added structure provided in Definition \ref{def:gr-cup1} is 
the cup-one product map, $\cup_1\colon A^1 \otimes_R A^1 \to A^1$, 
which meshes with the aforementioned maps to give a multiplication 
map, $(R \oplus A^{1}) \otimes_R (R\oplus A^1) \to R\oplus A^1$. 
Let us highlight the fact that axiom \ref{axii} requires that the cup-one product 
$\cup_1\colon A^1 \otimes_R A^1 \to A^1$ be both associative 
and commutative.

Note that, if $A$ is a {\em connected}\/ graded algebra with cup-one 
products, then $A^{\le 1}=A^0\oplus A^1$ acquires the structure of 
a commutative ring.  As can be seen in Example \ref{ex:interval}, 
though, this is not necessarily the case if $A$ is not connected.

For a graded algebra with cup-one products, $A$, 
the left Hirsch identity expresses the cup-one product 
$D^2(A)\otimes_R A^1 \to A^2$ in terms of cup-one products 
from $A^1 \otimes_R A^1$ to $A^1$. This raises the question 
of whether there is an analogous formula that expresses 
$a\cup_1 (b\cup c)$ in terms of cup-one products from 
$A^1 \otimes_R A^1$ to $A^1$.

Hirsch, \cite{Hirsch}, in the context of cochain algebras,
and Abbassi, \cite[Remarque~4.5 (1)]{Abbassi-2013}, in
the context of non-commutative differential forms, give
examples to show that there is no general identity relating  
$a \cup_1 (b \cup c)$, $a \cup_1 b$, and $a \cup_1 c$; 
see Proposition \ref{prop:no-hirsch} and Example \ref{ex:abbassi} below.
Nevertheless, these observations do not preclude the possibility 
of a formula for $a \cup_1 (b \cup c)$ in terms of cup-one products 
from $A^1 \otimes_R A^1$ to $A^1$.  We will give in 
Equation \eqref{eq:otid}  such a formula, that we call
the {\em right Hirsch identity}, in the case when 
the differential of $a$ is decomposable.  

\begin{example}
\label{ex:trivial-cup1}
There is a trivial way to impose a cup-one structure on a dga $(A,d)$: 
simply declare all the maps $\cup_1\colon A^p \otimes_R A^q \to A^{p+q-1}$ 
to be the zero maps. 
\end{example}

\subsection{Cup-one differential graded algebras}
\label{subsec:cupd-formula}
The goal for the rest of this section is to give a formula for the differential
of $1$-cochains that involves only cup products and cup-one
products of $1$-cochains. This leads to the following definition, 
which will play an important role in the sequel.

\begin{definition}
\label{def:cup-one-d-algebra}
A differential graded $R$-algebra $(A,d)$ is called a 
\emph{$\cup_1$-differential graded algebra}\/ (for short, $\cup_1$-dga) if 
the following conditions hold. 
\begin{enumerate}[label=(\roman*), itemsep=2pt]
\item \label{cup1-1}
$A$ is a graded $R$-algebra with cup-one products.
\item \label{cup1-2}
There is an $R$-linear map $\circ \colon D^2(A) \otimes_R D^2(A) \to D^2(A)$ 
such that 
\begin{equation}
\label{eq:circ-op}
(u\cup v) \circ (w\cup z) = (u \cup_1 w) \cup (v \cup_1 z)
\end{equation}
for all  $u,v,w,z\in A^1$. 
\item \label{cup1-3}
The differential $d$ satisfies the ``$\cupd$ formula,"
\begin{equation}
\label{eq:c1d}
d(a \cup_1 b) = - a \cup b - b \cup a
+ da \cup_1 b + db \cup_1 a - da \circ db,
\end{equation}
for all $a,b\in A^1$ with $da,db \in D^2(A)$.
\end{enumerate}
\end{definition}

\begin{example}
\label{ex:trivial-cup1-bis}
Let $(A,d)$ be a graded-commutative dga. Put on $A$ the 
trivial cup-one structure from Example \ref{ex:trivial-cup1} 
and set the $\circ$ product equal to the zero map. 
It is readily verified that both the $\cupd$ formula and the 
Steenrod identities are satisfied in this case, and so $(A,d)$ is 
a $\cup_1$-dga.
\end{example}

The next lemma gives a condition for the $\cupd$ formula to 
hold in a dga with Steenrod products.

\begin{lemma}
\label{lem:sth-cup1}
Let $(A,d)$ be dga such that $A$ is a graded algebra with 
cup-one products which is endowed with a map 
$\circ \colon D^2(A) \otimes_R D^2(A) \to D^2(A)$ 
such that formula \eqref{eq:circ-op} holds. 
Suppose the following two conditions are also satisfied.
\begin{enumerate}
\item 
\label{eq:hirsch-deg1}
The right Hirsch identity is satisfied in degree $1$; 
that is, for all $a,b,c\in A^1$ with $da \in D^2(A)$, 
\begin{equation}
\label{eq:otid}
a \cup_1 (b\cup c)
			= da \circ (b \cup c) - (b\cup c) \cup_1 a.
\end{equation}
\item The Steenrod identity is satisfied in degree $1$; that is, 
for all $a,b\in A^1$, 
\begin{equation}
\label{eq:steenrod-again}
d(a \cup_1 b) = - a \cup b - b \cup a + da \cup_1 b - a \cup_1 db.
\end{equation}
\end{enumerate}
Then the $\cupd$ formula \eqref{eq:c1d} holds, and so $(A,d)$ 
is a $\cup_1$-dga.
In particular, if $A$ is a dga with Steenrod products and 
equations and \eqref{eq:circ-op} and \eqref{eq:otid} are satisfied, 
then $A$ is a $\cup_1$-dga.
\end{lemma}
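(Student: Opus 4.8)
The plan is to derive the $\cupd$ formula \eqref{eq:c1d} by substituting the right Hirsch identity \eqref{eq:otid} into the Steenrod identity \eqref{eq:steenrod-again}. The only term in \eqref{eq:steenrod-again} that does not already appear in \eqref{eq:c1d} is $-a\cup_1 db$; so the whole proof amounts to showing that, under the hypothesis $db\in D^2(A)$, one has $-a\cup_1 db = db\cup_1 a - da\circ db$, or equivalently $a\cup_1 db = da\circ db - db\cup_1 a$. But this is precisely \eqref{eq:otid} applied with $b\cup c$ replaced by the decomposable element $db$: since $db\in D^2(A)$, we may write $db$ as a sum $\sum b_i\cup c_i$ with $b_i,c_i\in A^1$, apply \eqref{eq:otid} to each summand, and add. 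Here I would note that \eqref{eq:otid} is stated for a single product $b\cup c$, and that both sides are $R$-linear in the pair, so extending it to arbitrary decomposables is legitimate; a brief remark to that effect suffices.

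Concretely, I would carry out the following steps in order. First, fix $a,b\in A^1$ with $da,db\in D^2(A)$, and write $db=\sum_i b_i\cup c_i$. Second, apply the right Hirsch identity \eqref{eq:otid} to obtain $a\cup_1 db = \sum_i\big(da\circ(b_i\cup c_i)-(b_i\cup c_i)\cup_1 a\big) = da\circ db - db\cup_1 a$, using bilinearity of $\circ$ and of $\cup_1$ on $D^2(A)\otimes_R A^1$ (the latter available since $\cup_1\colon A^2\otimes_R A^1\to A^2$ is part of the cup-one structure). Third, substitute this expression for $a\cup_1 db$ into the Steenrod identity \eqref{eq:steenrod-again}:
\begin{equation*}
d(a\cup_1 b) = -a\cup b - b\cup a + da\cup_1 b - a\cup_1 db
= -a\cup b - b\cup a + da\cup_1 b + db\cup_1 a - da\circ db,
\end{equation*}
which is exactly \eqref{eq:c1d}. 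Hence $(A,d)$ satisfies all three conditions of Definition \ref{def:cup-one-d-algebra} and is a $\cup_1$-dga. For the final sentence, if $A$ is a dga with Steenrod products then \eqref{eq:steenrod-again} is just the degree-$1$ case \eqref{eq:steenrod-1c} of the Steenrod identities \eqref{eq:cup-i-steenrod}, so hypothesis (2) is automatic; combined with the assumed \eqref{eq:circ-op} and \eqref{eq:otid}, the first part applies.

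I do not expect a genuine obstacle here — the lemma is essentially an algebraic identity-chase. The one point that needs a sentence of care is the passage from \eqref{eq:otid} for a single cup product $b\cup c$ to the same identity for an arbitrary element of $D^2(A)$; this requires observing that $D^2(A)$ is by definition the $R$-span of such products and that every operation involved ($\cup_1$ on $A^2\otimes_R A^1$, the $\circ$-product, and ordinary addition) is $R$-linear, so the identity propagates to finite $R$-linear combinations. A secondary bookkeeping point is that \eqref{eq:otid} as stated also involves a term $(b\cup c)\cup_1 a$, which uses the map $\cup_1\colon A^2\otimes_R A^1\to A^2$; this is exactly the data packaged in ``graded algebra with cup-one products'' together with the ambient dga, so no extra hypothesis is needed. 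With those remarks in place the computation above is complete.
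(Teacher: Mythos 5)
Your proof is correct and follows essentially the same route as the paper: write $db$ as a sum of cup products, use linearity of $\cup_1$ and $\circ$ to apply the right Hirsch identity \eqref{eq:otid} and get $a\cup_1 db = da\circ db - db\cup_1 a$, then substitute into the Steenrod identity \eqref{eq:steenrod-again}. The paper's proof is a compressed version of exactly this argument, so nothing further is needed.
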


\begin{proof}
Let $a,b\in A^1$ with $da,db\in D^2(A)$.
From the assumptions that $da$ and $db$ are decomposable it
follows from equation \eqref{eq:otid} and the linearity
of the $\cup_1$ and $\circ$ products that
\begin{equation}
\label{eq:a-cup1-db}
a \cup_1 db = (da)\circ (db) - db \cup_1 a. 
\end{equation}
Combining this equation with \eqref{eq:steenrod-again}, 
the claim follows.
\end{proof}

\begin{question}
\label{quest:coho-cup1}
Suppose $(A,d)$ is a $\cup_1$-dga. Does the cohomology algebra 
$H^*(A)$, with cup-product inherited from $A$ and with zero differential, also
inherit in a natural way a $\cup_1$-product from $A$ that makes it into a 
graded algebra with cup-one products?
\end{question}

\subsection{Functoriality}
\label{subsec:cup1-functor}
A morphism of $\cup_1$-dgas is a dga map $\varphi\colon A \to B$ between 
two such objects which commutes with the cup-one products, that is, 
$\varphi(a_1\cup_1 a_2) = \varphi(a_1) \cup_1 \varphi(a_2)$, 
for all $a_1, a_2\in A^1$.  We denote the set of all such morphisms 
by $\Hom_1( A, B )$. Clearly, we have an inclusion 
$\Hom_1( A, B )\subseteq \Hom( A, B )$, but in general this 
inclusion is strict, as illustrated in the following example.

\begin{example}
\label{ex:hom-hom1}
Let $A=\bigwedge(a)$ be the exterior algebra over $R=\Z$
on a single generator $a$ in degree $1$, with $da=0$ and with
$ a\cup_1 a = a$. Then $\Hom(A,A)\cong \Z$ while $\Hom_1(A,A)$ 
contains only two elements, namely, the zero map and the identity.
\end{example}

The notions of $1$-quasi-isomorphism and $1$-equivalence 
are defined in the category of $\cup_1$-dgas  
exactly as in the category of dgas.  Note though that two 
$\cup_1$-dgas can be $1$-equivalent as dgas, 
but not as $\cup_1$-dgas.  For instance, take $A$ to 
be the $\cup_1$-dga from Example \ref{ex:hom-hom1}, 
and $A'$ to be the same dga, but with trivial cup-one structure, 
$a\cup_1 a=0$. 

It would be interesting to have a definition of $1$-formality in 
this context, but this depends on answering Question \ref{quest:coho-cup1}
first. If this could be done, it would also be interesting to decide 
whether it is possible that a $\cup_1$-dga may be $1$-formal 
as a dga, but not as a $\cup_1$-dga.

\section{Cochain algebras}
\label{sect:cochain}
 The first of our two motivating examples for the definition
 of cup-one dgas is the cochain complex of a space.
 In this section we review the notion of a $\Delta$-complex
 and the cochain complex of a $\Delta$-set. 
 Theorem \ref{thm:quasi-iso} relates the $n$-type of 
 CW complexes to equivalences of cochain complexes.
 
\subsection{$\Delta$-sets and $\Delta$-complexes}
\label{subsec:delta}
We start the section by reviewing the notion of a $\Delta$-complex, in 
the sense of Rourke and Sanderson \cite{Rourke-Sanderson}; 
see also Hatcher \cite{Hatcher} and Friedman \cite{Friedman}. 
We will view such a complex as the geometric realization of the 
corresponding $\Delta$-set, cf.~\cite{Friedman}.

An (abstract) $n$-simplex $\Delta^n$ is simply a finite ordered set, $(0,1, \dots, n)$. 
The face maps $d_i\colon \Delta^n\to \Delta^{n-1}$ 
for $n \ge 1$ are given by omitting the $i$-th 
element in the set; that is, 
\begin{equation}
\label{eq:diff-simplicial}
d_i(0,\dots,n)=(0,\dots, \hat{i}, \dots ,n) .
\end{equation}
These maps satisfy $d_i d_j=d_{j-1}d_i$ whenever 
$0\le i<j\le n$ and $n \ge 2$.
The geometric realization of the simplex, $\abs{\Delta^n}$, is the convex hull of $n+1$ 
affinely independent vectors in $\R^{n+1}$, endowed with the subspace topology; the 
face maps induce continuous maps, $d_i\colon \abs{\Delta^n}\to \abs{\Delta}^{n-1}$.

More generally, a {\em $\Delta$-set}\/ consists of a sequence of sets $\{X_n\}_{n\ge 0}$ 
and maps $d_i\colon X_{n+1}\to X_n$  for each $0\le i\le n+1$ such that 
$d_i d_j=d_{j-1}d_i$ whenever $i<j$.  This is the generalization of the 
notion of ordered (abstract) simplicial complex, where the sets $X_n$ 
are the sets of $n$-simplices and the maps $d_i$ are the face maps. Note that the singular simplices in a topological space
form a $\Delta$-set.

The geometric realization of a $\Delta$-set is the topological space
\begin{equation}
\label{eq:delta-realize}
\abs{X} = \coprod_{n\ge 0} X_n \times \abs{\Delta^n}/\!\sim, 
\end{equation}
where $\sim$ is the equivalence relation generated by $(x,\partial_i(p))\sim (d_i(x), p)$ 
for $x\in X_{n+1}$,  $p\in \abs{\Delta^n}$, and $0\le i\le n$.  Such a space is called 
a {\em $\Delta$-complex}, and can be viewed either as a special kind of 
CW-complex, or a generalized simplicial complex.

The assignment $X\leadsto \abs{X}$ is functorial: 
if $f\colon X\to Y$ is a map of $\Delta$-sets (i.e., $f$ is a family of maps 
$f_n\colon X_n \to Y_n$ commuting with the face maps), there is an 
obvious realization, $\abs{f}\colon \abs{X} \to \abs{Y}$, and this is a  
(continuous) map of $\Delta$-complexes. We will often abuse notation 
and mistake an (abstract) $\Delta$-complex for its geometric realization, 
$\abs{X}$, and likewise for maps between these objects.

\subsection{Cochain algebras of $\Delta$-complexes}
\label{subsec:delta-cochains}

The chain complex of a $\Delta$-set $X$, denoted $C_*(X)$, coincides with the 
simplicial chain complex of its geometric realization: for each $n\ge 0$, the chain 
group $C_n(X)$ is the free abelian group on $X_n$, while the boundary 
maps  $\partial_n\colon C_n(X)\to C_{n-1}(X)$ are the $\Z$-linear maps given 
on basis elements by $\partial_n=\sum_{i=0}^n (-1)^i d_i$, where $d_i$ is given 
by formula \eqref{eq:diff-simplicial}. The cochain complex 
$C^*(X)=\big(C^n(X),\delta^n\big)_{n\ge 0}$ is defined similarly 
with $(\delta u)(x) = u(\partial x)$ for
$u \in C^n(X)$ and $x \in C_{n+1}(X)$.

More generally, we let $C=C^*(X;R)$ be the cochain complex of $X$ 
with coefficients in a commutative ring $R$ with unit $1$. This $R$-module 
acquires the structure of an $R$-algebra, with multiplication given by the 
cup-product of cochains. More precisely, if $u\in C^p(X;R)$ and $v \in C^q(X;R)$,
then $u \cup v  \in C^{p+q}(X;R)$ is the cochain given by 
\begin{equation}
\label{eq:cup-cochains}
u \cup v \, ([ 0,1,\ldots, p+q]) = 	u([ 0, \ldots, p])\cdot v([p, \ldots, p+q])
\end{equation}	
where $[ a_0, a_1, \ldots, a_k]$ denotes a $k$-simplex on the indicated vertices 
and $\,\cdot\,$ denotes the product in $R$.  Moreover, the unit $1_C\in C^0(X;R)$ 
is the $0$-cochain which takes the value $1_R$ on each $0$-simplex; clearly, 
the structure map $R\to C$ which takes $1_R$ to $1_C$ is injective. 

Two maps of spaces, $f, g\colon X\to Y$, are said to be {\em 
$n$-homotopic}\/ (in the sense of \cite{Wh}), if $f\circ h\simeq g\circ h$, 
for every map $h\colon K\to X$ from a CW-complex $K$ of dimension 
at most $n$. A map $f\colon X\to Y$ is an $n$-homotopy equivalence 
(for some $n\ge 1$) if it admits an $n$-homotopy inverse. If such a map 
$f$ exists, one says that $X$ and $Y$ have the same {\em $n$-homotopy type}, 
written $X\simeq_n Y$.  Two CW-complexes, $X$ and $Y$, are said to be 
of the same {\em $n$-type}\/  if $X^{(n)}\simeq_{n-1} Y^{(n)}$. In particular, 
any two connected CW-complexes have the same $1$-type, and 
they have the same $2$-type if and only if their fundamental groups are 
isomorphic.

Continuous maps between CW-complexes (or $\Delta$-complexes, 
or simplicial complexes) can be approximated up to homotopy by maps 
that respect cellular structures; we will do that when needed, without 
further mention.  A map $f\colon X\to Y$ between two $\Delta$-complexes 
induces a dga map, $f^{\sharp} \colon C^\ast(Y;R) \to C^\ast(X;R)$, 
between the respective cochain algebras, and thus a morphism,  
$f^{\ast}\colon H^*(Y;R)\to H^*(X;R)$, between their 
cohomology algebras. If $f$ is a homotopy equivalence, 
then $f^{\sharp}$ is a quasi-isomorphism of $R$-dgas. 
More generally, we have the following theorem, which provides 
the bridge from topology to algebra in this context. Although 
surely known to specialists, we could not find a proof in the 
literature, so we include one here.

\begin{theorem}
\label{thm:quasi-iso}
If $X$ and $Y$ are CW-complexes of the same $n$-type, then the 
cochain algebras $C^\ast(X;R)$ and $C^\ast(Y;R)$ are $(n-1)$-equivalent.
\end{theorem}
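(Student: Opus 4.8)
The plan is to reduce the statement to a homotopy-theoretic comparison of the relevant skeleta and then transport it to cochains via the standard fact that a homotopy equivalence of spaces induces a quasi-isomorphism of cochain algebras. First I would unwind the definitions: $X$ and $Y$ have the same $n$-type means $X^{(n)} \simeq_{n-1} Y^{(n)}$, i.e.\ there are cellular maps $f\colon X^{(n)} \to Y^{(n)}$ and $g\colon Y^{(n)} \to X^{(n)}$ that are mutually $(n-1)$-homotopy inverse. The key observation is that $(n-1)$-homotopic maps out of an $(n-1)$-dimensional (or lower) complex agree up to honest homotopy, and since cohomology $H^i$ for $i \le n-1$ is detected on the $(n-1)$-skeleton, such maps induce the same homomorphism on $H^i$ for $i \le n-1$. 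The slightly delicate point is bookkeeping at the top degree $i = n$: I want an isomorphism on $H^i$ for $i \le n-1$ and a monomorphism on $H^n$, which is exactly the definition of an $(n-1)$-quasi-isomorphism, so I must be careful that the skeletal truncation at level $n$ still controls $H^n$ correctly (it controls cocycles but not all of $H^n$, hence only the monomorphism claim is available, which is all we need).

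Concretely, the steps I would carry out are: (1) Reduce from $X,Y$ to their $n$-skeleta, noting that the inclusion $X^{(n)} \hookrightarrow X$ induces an isomorphism on $H^i(-;R)$ for $i < n$ and a monomorphism on $H^n(-;R)$ — so it suffices to prove $C^*(X^{(n)};R)$ and $C^*(Y^{(n)};R)$ are $(n-1)$-equivalent, and then compose/precompose with these inclusion-induced quasi-isomorphisms (reading off the definition of $(n-1)$-quasi-isomorphism and checking the composite still has the required iso/mono pattern). (2) Using $f\colon X^{(n)} \to Y^{(n)}$ from the $(n-1)$-homotopy equivalence, form the dga map $f^{\sharp}\colon C^*(Y^{(n)};R) \to C^*(X^{(n)};R)$. (3) Show $f^{\sharp}$ is an $(n-1)$-quasi-isomorphism: restrict everything to the $(n-1)$-skeleta, where $f|_{X^{(n-1)}}$ and $g|_{Y^{(n-1)}}$ become genuine homotopy inverses (because the $(n-1)$-homotopies, restricted to the $(n-1)$-dimensional domain, are ordinary homotopies), hence $f^{\sharp}$ and $g^{\sharp}$ induce mutually inverse isomorphisms on $H^i$ for $i \le n-1$; for the degree-$n$ monomorphism, argue directly with cocycles: an $n$-cocycle on $Y^{(n)}$ is determined by its values on $n$-cells, and $f^{\sharp}$ composed with $g^{\sharp}$ differs from the identity by a coboundary in degrees $\le n-1$, which together with a dimension count forces injectivity of $f^{\sharp}$ on $H^n$. (4) Assemble: the zig-zag
\[
C^*(X;R) \longleftarrow C^*(X^{(n)};R) \xleftarrow{\;f^{\sharp}\;} C^*(Y^{(n)};R) \longrightarrow C^*(Y;R)
\]
— where the outer maps are the skeletal-inclusion quasi-isomorphisms (which are in particular $(n-1)$-quasi-isomorphisms) and the middle map is the $(n-1)$-quasi-isomorphism just produced — exhibits an $(n-1)$-equivalence.

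The main obstacle I expect is \emph{step (3) in the top degree $n$}: the cleanest parts (the iso on $H^i$, $i \le n-1$) follow formally from restricting to the $(n-1)$-skeleton and using that $(n-1)$-homotopies become ordinary homotopies there, but getting the monomorphism on $H^n$ requires a genuine argument since the $n$-homotopy equivalence does \emph{not} give homotopy-inverse maps on the full $n$-skeleton — only on the $(n-1)$-skeleton. I would handle this by working at the cochain level: the chain map $g^{\sharp} f^{\sharp} - \id$ on $C^*(Y^{(n)};R)$ is chain-homotopic to $0$ through a homotopy built from the $(n-1)$-homotopy data, and in degree $n$ one checks that the chain homotopy $h\colon C^n \to C^{n-1}$ together with the fact that there are no $(n+1)$-cells to worry about on the $n$-skeleton forces $g^{\sharp} f^{\sharp}$ to be the identity on $Z^n$ modulo $B^n$, hence $f^{\sharp}$ is injective on $H^n$. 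A secondary (purely bookkeeping) annoyance is the composition lemma for $(n-1)$-quasi-isomorphisms — verifying that composing an $(n-1)$-quasi-isomorphism with a quasi-isomorphism, or with a map that is an iso in degrees $<n$ and a mono in degree $n$, again yields an $(n-1)$-quasi-isomorphism — but this is a routine diagram chase from the definitions given in Section~\ref{subsec:dga}.
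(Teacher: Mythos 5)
Your overall zig-zag architecture and the outer legs (the skeletal inclusions inducing $(n-1)$-quasi-isomorphisms) are fine, but step (3) contains a genuine gap that cannot be repaired along the lines you propose: an $(n-1)$-homotopy equivalence $f\colon X^{(n)}\to Y^{(n)}$ need \emph{not} induce a monomorphism on $H^n$, in either direction. Concretely, take $n=2$, $X^{(2)}=S^2$ and $Y^{(2)}=S^2\vee S^2$. Both are simply connected, so the constant maps in the two directions are mutually inverse $1$-homotopy equivalences; yet $f^{*}\colon H^2(S^2\vee S^2;R)\cong R^2\to H^2(S^2;R)\cong R$ is the zero map, so $f^{\sharp}$ is not a $1$-quasi-isomorphism (and neither is $g^{\sharp}$). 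Your proposed fix fails at exactly the point you flagged: the $(n-1)$-homotopy data only supplies homotopies $g f\circ h\simeq h$ for $h$ defined on complexes of dimension at most $n-1$, hence at best a chain homotopy between $(gf)^{\sharp}$ and the identity \emph{after restriction to the $(n-1)$-skeleton}; it gives no chain homotopy on all of $C^{*}(Y^{(n)};R)$ and in particular says nothing about degree $n$, as the example shows. (Your degree-$\le n-1$ argument also needs a small repair --- $f|_{X^{(n-1)}}$ and $g|_{Y^{(n-1)}}$ are homotopy inverse only after composing with the inclusions into the $n$-skeleta --- but that part does go through, using injectivity of $H^{n-1}(X^{(n)};R)\to H^{n-1}(X^{(n-1)};R)$.)

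The missing idea, which is how the paper proceeds, is Whitehead's stabilization theorem \cite[Theorem 6]{Wh}: if $X^{(n)}\simeq_{n-1}Y^{(n)}$, then there is a \emph{genuine} homotopy equivalence
$\overline{X}^{(n)}=X^{(n)}\vee\bigvee_{i\in I}S^n_i \to \overline{Y}^{(n)}=Y^{(n)}\vee\bigvee_{j\in J}S^n_j$
after wedging on $n$-spheres. This honest homotopy equivalence induces a quasi-isomorphism of cochain algebras, and the collapse maps $q_X,q_Y$ onto $X^{(n)}$ and $Y^{(n)}$ induce $(n-1)$-quasi-isomorphisms, since collapsing the wedge of $n$-spheres is an isomorphism on $H^i$ for $i\le n-1$ and a split injection on $H^n$. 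The wedge summands absorb precisely the degree-$n$ discrepancy that defeats your direct argument; without them, no single map between $X^{(n)}$ and $Y^{(n)}$ that you extract from the $(n-1)$-homotopy equivalence is guaranteed to have the required iso/mono pattern.
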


\begin{proof}
By assumption, $X^{(n)}\simeq_{n-1} Y^{(n)}$. Hence, 
by \cite[Theorem 6]{Wh}, there is a homotopy equivalence, $f$, from 
$\overline{X}^{(n)}  = X^{(n)} \vee \bigvee_{i\in I} S^n_i$ 
to $\overline{Y}^{(n)} =Y^{(n)}\vee \bigvee_{j\in J} S^n_j$, for some 
indexing sets $I$ and $J$. Let $q_X\colon \overline{X}^{(n)} \to X^{(n)}$ and 
$q_Y\colon \overline{Y}^{(n)} \to Y^{(n)}$ be the maps that collapse the 
wedges of $n$-spheres to the basepoint of the wedge. 
\begin{equation}
\label{eq:wh}
\begin{tikzcd}
\overline{X}^{(n)} \ar[d, "q_X" ] \arrow{r}{f}[swap]{\simeq}
& \overline{Y}^{(n)} \ar[d, "q_Y"]\\
X^{(n)} &  Y^{(n)}
\end{tikzcd}
\quad\text{\larger[1.5]$\leadsto$} \quad
\begin{tikzcd}
C^\ast(\overline{X}^{(n)};R)
& C^\ast(\overline{Y}^{(n)};R)\phantom{.} 
\arrow{l}{\simeq}[swap, pos=0.4]{f^{\sharp}} \\
C^\ast(X^{(n)},R) \ar[u, "q_X^{\sharp}"' ] 
&  C^\ast(Y^{(n)},R).\ar[u, "q_Y^{\sharp}"']
\end{tikzcd}
\end{equation}

Clearly, the map 
$f^{\sharp} \colon C^\ast(\overline{Y}^{(n)};R) \to C^\ast(\overline{X}^{(n)};R)$ 
is a quasi-isomorphism, whereas the maps 
$q_X^{\sharp} \colon  C^\ast(X^{(n)};R) \to C^\ast(\overline{X}^{(n)};R)$ 
and $q_Y^{\sharp} \colon C^\ast(Y^{(n)};R) \to C^\ast(\overline{Y}^{(n)};R)$ 
are $(n-1)$-quasi-isomorphisms. Therefore, 
$C^\ast(X^{(n)};R)$ and $C^\ast(Y^{(n)};R)$ are $(n-1)$-equivalent. 
Hence, $C^\ast(X;R)$ and $C^\ast(Y;R)$ are also $(n-1)$-equivalent.
\end{proof}

In particular, if $X$ and $Y$ are connected CW-complexes of the same 
$2$-type, i.e., if $\pi_1(X)\cong \pi_1(Y)$, then $C^\ast(X;R)$ is 
$1$-equivalent to $C^\ast(Y;R)$.

\subsection{The Steenrod $\cup_i$ operations on cochains}
\label{subsec:steenrod-cup-i}
Let $X$ be a $\Delta$-complex, and let $A=(C^*(X;R),\cup,\delta)$ be its 
cochain algebra  with coefficients in a commutative ring $R$. 
In a seminal paper from 1947, Steenrod \cite{Steenrod} introduced 
operations $\cup_i\colon A^p\otimes_{R}  A^q\to A^{p+q-i}$ 
that now bear his name. For $i=0$, the $\cup_0$ operation coincides 
with the usual cup product, while  if $p<i$ or $q<i$, then $\cup_i=0$. 
Crucially, Steenrod's $\cup_i$ products satisfy the identities \eqref{eq:cup-i-steenrod}. 
Several years later, Hirsch \cite{Hirsch} showed that the identities \eqref{eq:cup-i-hirsch} 
also hold. In our terminology from Definition \ref{def:cup-i-algebra}, the cochain algebra 
$A$ is a dga with Steenrod products. 

The construction of the $\cup_i$ products is functorial, in the following 
sense. Let $f\colon X\to Y$ be a map of $\Delta$-complexes. Without 
loss of generality, we may take baycentric subdivisons on $X$ and $Y$. 
Now put partial orders on the resulting sets of vertices by assigning to each
vertex the dimension of the simplex for which it is the barycenter of; note that 
this defines a linear order on each simplex of $X$ and $Y$. With these constructions, 
the map $f$ induces a chain map $f^{\sharp} \colon C^\ast(Y) \to C^\ast(X)$ 
that preserves the ordering of the vertices. Thus, by 
\cite[Theorem 3.1]{Steenrod}, the map $f^{\sharp}$ also  
commutes with the $\cup$ and $\cup_i$ products.  

The following is the definition from \cite{Steenrod}
of the $\cup_1$-products of cochains $u\in A^p$, $v \in A^q$, 
when evaluated on a simplex,
\begin{align}
\label{eq:cup1}
u \cup_1\! v \, ([ 0,1,\ldots, p+q-1])&=\sum_{j=0}^{p-1} (-1)^{(p-j)(q+1)}
		u([ 0, \ldots, j, j+q, \ldots, p+q-1])\cdot
\\[-6pt] \notag
& \hspace{2in}v([j, \ldots, j+q])\, .
\end{align}	
Clearly, $u\cup_1 v=0$ if either $u$ or $v$ is a $0$-cochain. 
When both $u$ and $v$ are $1$-cochains, formulas \eqref{eq:cup-cochains} 
and \eqref{eq:cup1} simplify to
\begin{align}
\label{eq:cup-simplicial}
(u \cup v)(s) &= u(e_1)\cdot v(e_2) , \\
\label{eq:cup1-simplicial}
(u \cup_1\! v)(e) &=  u(e)\cdot v(e) , 
\end{align} 
where in \eqref{eq:cup-simplicial}, $s=[i,j,k]$ is a $2$-simplex with front face $e_1=[i,j]$ 
and back face $e_2=[j,k]$, while in \eqref{eq:cup1-simplicial}, $e$ is a $1$-simplex. 
In particular, the $\cup_1$-product on $C^1(X;R)$ is both associative 
and commutative. Therefore, the $R$-module 
$R\oplus C^{1}(X;R)$ naturally acquires 
the structure of a commutative ring with unit. Since, 
as mentioned previously, the left Hirsch identity \eqref{eq:hirsch-1c} 
holds, the cochain algebra $C^*(X;R)$ is a graded algebra with cup-one products, 
in the sense of Definition \ref{def:gr-cup1}.

\begin{example}
\label{ex:interval}
Let $I$ be the closed interval $[0, 1]$, viewed as a simplicial complex 
in the usual way, and let $C=C^\ast(I;R)$ be its cochain algebra over $R$.  
Then $C^0 = R \oplus R$ with generators $t_0, t_1$ corresponding
to the endpoints $0$ and $1$, and $C^1 = R$ with generator $u$.
The differential $d\colon C^0 \to C^1$ is given by
$d t_0 = -u$ and $d t_1 = u$, while the multiplication is given 
on generators by $t_i  t_j = \delta_{ij} t_i$, 
$t_0  u = u  t_1 =u$, and $u t_0 = t_1  u = 0$. Note that 
the cocycle $t_0+t_1$ is the unit of $C$, and that multiplication 
of $0$- and $1$-cocycles is not commutative, e.g., $t_0u\ne ut_0$.   
Furthermore, $H^*(C)=R$, concentrated in degree $0$.  
Finally, the cup-one product $C^1 \otimes_R C^1 \to C^1$ is 
given by $u \cup_1 u = u$.
\end{example}

\begin{example}
\label{ex:torus-cup}
Let $X=S^1\times S^1$ be the $2$-torus, and  
consider the $\Delta$-complex structure on $X$ 
depicted in Figure \ref{fig:torus}. The attaching 
maps of the standard $2$-simplex 
are indicated by the numbers on the vertices and the requirement
that the correspondence preserves the ordering. Edges are oriented
from the lower-numbered vertex to higher-numbered vertex.
The $1$-cochains are represented by 
$1$-cells---with a transverse orientation---in a graph whose
edges are transverse to the edges of the $\Delta$-complex with
vertices of the graph contained in the $2$-cells of the $\Delta$-complex.
The value of a $1$-cochain, $u$, on an oriented $1$-cell, $e$, is 
$0$ if $u$ and $e$ do not intersect.
If $u$ and $e$ intersect, then $u(e) =1$ if at the point
of intersection the transverse orientation of $u$ agrees with the
orientation of $e$; otherwise, $u(e) = -1$.
Now write $\pi_1(X)=\Z^2=\langle x_1, x_2\mid 
x^{}_1x^{}_2x_1^{-1}x_2^{-1}\rangle$, 
and identify $H^*(X)$ with $\bigwedge^*(v_1,v_2)$, 
the exterior algebra on two generators  in degree $1$. 
Let $a_1$ and $a_2$ be the $1$-cocycles shown in the figure, so that  
$[a_i]=v_i$, and let $b$ be the indicated $1$-cochain. 
Then $b = a_1 \cup_1 a_2$ and $d(b) = -a_1 \cup a_2 - a_2 \cup a_1$.

\begin{figure}
\begin{tikzpicture}[scale=2]
\node [left] at (0,1) {${x}_{1}$};
\node [below] at (1,0) {${x}_{2}$};
\node [right] at (2,1) {${x}_{1}$};
\node [above] at (1,2) {${x}_{2}$};
\draw[->] (0,1.5)--(0,1);
\draw[->] (.75, 0) -- (1,0);
\draw[->] (2,1.5) -- (2,1);
\draw[->] (.75,2) -- (1,2);

\draw (0,0) -- (0,2) -- (2,2) -- (2,0) -- (0,0);
\draw (0,0) -- (2,2);
\draw [red, ultra thick] (0,1.25) -- (2,1.25);
\draw [blue, ultra thick]  (0.75, 0) -- (0.75, 2); 
\draw [red, thick, <-] (0.3,1.1) -- (0.3,1.4]); 
\draw [blue, thick, ->] (0.6,0.3) -- (0.9,0.3); 
\draw [brown, ultra thick] (0.85,1.15) -- (1.15,0.85);
\draw [brown, thick, <-] (0.95,0.85) -- (1.15, 1.05]); 
\node [red, thick, below] at (0.33,1.1) {$a_1$}; 
\node [blue, thick, below] at (0.5,0.35) {$a_2$}; 
\node [brown, thick, below] at (0.92,0.9) {$b$}; 
\node [above left] at (0,2) {\footnotesize $1$};
\node [below left] at (0,0) {\footnotesize $2$};
\node [below right] at (2,0) {\footnotesize $4$};
\node [above right] at (2,2) {\footnotesize $3$};
\end{tikzpicture}
\caption{Cochains on a torus}
\label{fig:torus}
\end{figure}
\end{example}

\subsection{Cochain algebras as cup-one differential graded algebras}
\label{subsec:cochains-cup1d}

We are now ready to state and prove the main result of this section. 
Let $X$ be a non-empty $\Delta$-complex, and let $R$ be a unital 
commutative ring.

\begin{theorem}
\label{thm:cochain-cup1d}
Let $A = C^\ast(X;R)$ be a cochain algebra. Then $A$ is a $\cup_1$-dga.
\end{theorem}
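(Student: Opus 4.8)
The plan is to apply Lemma \ref{lem:sth-cup1}, which reduces the problem to verifying three things about $A = C^*(X;R)$: that it is a graded algebra with cup-one products (already established at the end of section \ref{subsec:steenrod-cup-i}), that there is an $R$-linear $\circ$ operation on $D^2(A)$ satisfying \eqref{eq:circ-op}, and that the right Hirsch identity \eqref{eq:otid} holds in degree $1$. The Steenrod identity \eqref{eq:steenrod-again} in degree $1$ is just the specialization \eqref{eq:steenrod-1c} of Steenrod's identity \eqref{eq:cup-i-steenrod}, already recorded. So the work is concentrated in constructing $\circ$ and proving \eqref{eq:otid}.

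First I would define $\circ$ at the cochain level. Given cochains $u,v,w,z \in C^1(X;R)$, set $(u\cup v)\circ(w\cup z)$ to be the $2$-cochain whose value on a $2$-simplex $s = [i,j,k]$ with front edge $e_1 = [i,j]$ and back edge $e_2 = [j,k]$ is $u(e_1)w(e_1)\cdot v(e_2)z(e_2)$; equivalently, using \eqref{eq:cup1-simplicial} and \eqref{eq:cup-simplicial}, this is $(u\cup_1 w)\cup(v\cup_1 z)$ evaluated on $s$. The main subtlety is well-definedness: $D^2(A)$ is the $R$-span of products $u\cup v$, and an element of $D^2(A)$ may be written as such a sum in many ways, so I must check that the proposed formula descends to a well-defined $R$-bilinear map $D^2(A)\otimes_R D^2(A)\to D^2(A)$. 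The clean way is to observe that a $2$-cochain lies in $D^2(A)$ precisely when its value on each $2$-simplex $[i,j,k]$ depends only on the pair of values $(\text{front edge value},\text{back edge value})$ and is $R$-bilinear in that pair — more precisely, $\cup\colon C^1\otimes_R C^1\to D^2$ identifies $D^2(A)$ with a quotient of $C^1\otimes_R C^1$, and I need the pairing $(u\otimes v, w\otimes z)\mapsto (u\cup_1 w)\otimes(v\cup_1 z)$ to factor through that quotient on both sides. Since $\cup_1$ on $C^1$ is $R$-bilinear and, by \eqref{eq:cup1-simplicial}, is computed edge-by-edge, the kernel of $\cup\colon C^1\otimes_R C^1\to C^2$ consists of tensors that vanish under the "front-back evaluation" pairing, and this kernel is carried into itself under tensoring with a fixed $w\otimes z$; hence $\circ$ is well defined. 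Then \eqref{eq:circ-op} holds by construction.

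Next I would verify the right Hirsch identity \eqref{eq:otid}: for $a,b,c\in C^1(X;R)$ with $da = \delta a \in D^2(A)$, one has $a\cup_1(b\cup c) = \delta a\circ(b\cup c) - (b\cup c)\cup_1 a$. Both sides are $2$-cochains, so it suffices to evaluate on an arbitrary $2$-simplex $[i,j,k]$ and compare, using the explicit formula \eqref{eq:cup1} for $u\cup_1 v$ with $u = a$ a $1$-cochain and $v = b\cup c$ a $2$-cochain (here $p = 1$, $q = 2$, so the sum has the single term $j = 0$), the formula \eqref{eq:cup-cochains} for $b\cup c$, the just-defined formula for $\circ$, and the formula \eqref{eq:cup1-simplicial} for $(b\cup c)\cup_1 a$ — note this last term involves a $2$-cochain cup-one a $1$-cochain, so again \eqref{eq:cup1} applies with $p = 2$, $q = 1$. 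The hypothesis $\delta a\in D^2(A)$ is what lets the term $\delta a\circ(b\cup c)$ even be written down, and expanding $\delta a$ on the boundary of the relevant $3$-simplex (or on the edges of $[i,j,k]$) is where the identity becomes a finite check among values of $a,b,c$ on edges; this mirrors the cochain-level computations of Hirsch \cite{Hirsch} and is the step I expect to be the most calculation-heavy, though it is ultimately a routine (if slightly intricate) bookkeeping of signs and vertex orderings. Once \eqref{eq:otid} and \eqref{eq:circ-op} are in hand, Lemma \ref{lem:sth-cup1} immediately gives that $A$ is a $\cup_1$-dga, completing the proof.

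The main obstacle, then, is not any single deep idea but rather the well-definedness of $\circ$ on $D^2(A)$ together with the careful sign-tracking in the simplicial verification of the right Hirsch identity; I would isolate the well-definedness as a short preliminary lemma (phrased in terms of the surjection $C^1\otimes_R C^1\twoheadrightarrow D^2(A)$) so that the rest of the argument reads cleanly as a reduction to Lemma \ref{lem:sth-cup1}.
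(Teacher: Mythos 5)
Your proposal is correct and follows essentially the same route as the paper: reduce to Lemma \ref{lem:sth-cup1}, build the $\circ$ operation out of edgewise products, and verify the right Hirsch identity \eqref{eq:otid} by evaluating both sides on a $2$-simplex using \eqref{eq:cup1} and $\delta a(s)=a(\partial s)$. The only difference is that the paper dissolves the well-definedness subtlety you isolate by defining $\circ$ on all of $A^2\otimes_R A^2$ as the pointwise product $(v\circ w)(s)=v(s)\cdot w(s)$ and then checking that it restricts to $D^2(A)$ and satisfies \eqref{eq:circ-op}, which is a slightly cleaner packaging of the same computation as your quotient-factoring argument.
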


\begin{proof}
As mentioned previously, $C^*(X;R)$ is a graded algebra 
with cup-one products. Furthermore, it is a differential graded 
algebra, and the Steenrod identities \eqref{eq:cup-i-steenrod} 
hold in full generality; in particular, \eqref{eq:steenrod-again} 
holds.  In view of Lemma \ref{lem:sth-cup1}, we only need to 
show that there is a well-defined $\circ$ map satisfying
\eqref{eq:circ-op} and that the right Hirsch identity, 
equation \eqref{eq:otid}, also holds.

From formula \eqref{eq:cup1-simplicial} it follows that the $\cup_1$-product 
is both associative and commutative for $1$-cochains. Therefore, 
$R \oplus C^1(X;R)$ naturally acquires the structure of a commutative ring
with unit, in the fashion outlined right after Definition \ref{def:gr-cup1}.
From formulas \eqref{eq:cup-simplicial}--\eqref{eq:cup1-simplicial}, 
it follows that for $1$-cochains $u, v, w, u_1,$ and  $u_2$ and for 
a $2$-simplex $s$ in $X$ we have that 
\begin{align}
 \label{eq:derive-1}
u(e_1)(v \cup w)(s) & = ((u\cup_1 v) \cup w)(s),\\
\label{eq:derive-2}
u(e_2)(v \cup w)(s)& = (v \cup (u \cup_1 w))(s),\\
\label{eq:derive-3}
(u_1 \cup u_2)(s) \cdot (v \cup w)(s)
	& = ((u_1 \cup_1 v) \cup (u_2 \cup_1 w))(s).
\end{align}

Define now an $R$-linear map 
$\circ\colon A^2 \otimes_R A^2 \to A^2$ 
by setting
\begin{equation}
\label{eq:circ-cochains}
(v \circ w)(s) = v(s)\cdot w(s)
\end{equation}
for any $2$-cochains $v,w$ and any $2$-simplex $s$. 
It follows from \eqref{eq:cup-simplicial},  \eqref{eq:cup1-simplicial}, 
and \eqref{eq:derive-3} that the map above restricts to a map 
$\circ\colon D^2(A) \otimes_R D^2(A) \to D^2(A)$ which 
obeys formula \eqref{eq:circ-op}.

The next step is to show that the right Hirsch identity holds; 
that is, if $du = \sum_{i} u_{1,i} \cup u_{2,i}$, then
\begin{equation}
\label{eq:S0}
u \cup_1 (v \cup w)
	= - (u\cup_1 v) \cup w 
		+ \sum_{i} (u_{1,i}\cup_1 v) \cup (u_{2,i}\cup_1 w)
		- v \cup(u\cup_1 w).
\end{equation}

By equation \eqref{eq:cup1}, the cup-one product map 
$\cup_1\colon A^1 \otimes_R A^2 \to A^2$ is given by
\begin{equation}
\label{eq:S1}
(u \cup_1 z)(s) = - u(e_3)z(s), 
\end{equation}
for $u\in A^1$ and $z \in A^2$. Since $d u  = u(e_1) + u(e_2) - u(e_3)$, 
it follows that
\begin{equation}
\label{eq:1-cup1-2}
( u \cup_1 z)(s) = (-u(e_1) - u(e_2) + du(s))\cdot z(s).
\end{equation}
For $z = v \cup w$ and $du = \sum_i u_{1,i} \cup u_{2,i}$,
it then follows from equations \eqref{eq:derive-1}, \eqref{eq:derive-2}, 
and \eqref{eq:derive-3} that
\begin{equation}
\label{eq:S4}
\bigl( u \cup_1 (v \cup w) \bigr)(s)
	= - \biggl( (u \cup_1 v) \cup w 
		- \sum_{i} (u_{1,i}\cup_1 v) \cup (u_{2,i}\cup_1 w)
		+ v \cup (u\cup_1 w) \biggr)(s).
\end{equation}
Since equation \eqref{eq:S4} holds for all $2$-simplices, $s$, 
the proof is complete.
\end{proof}

The next result is motivated by---and generalizes---an example 
due to Hirsch \cite{Hirsch}. 

\begin{proposition}
\label{prop:no-hirsch}
There is no linear combination of cup products and cup products of 
cup-one products that equals $u_3 \cup_1 (u_1 \cup u_2)$ for all $1$-cochains
$u_1, u_2, u_3\in C^1(X;R)$ and all $\Delta$-complexes $X$.
\end{proposition}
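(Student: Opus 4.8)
The plan is to defeat any candidate identity on a single $\Delta$-complex, namely $X=\Delta^2$, the standard $2$-simplex together with all of its faces. Write $s=[0,1,2]$ for its unique $2$-simplex and $e_1=[0,1]$, $e_2=[1,2]$, $e_3=[0,2]$ for its three (distinct) edges; a $1$-cochain on $\Delta^2$ is then nothing but an arbitrary triple of values $u(e_1),u(e_2),u(e_3)\in R$, and $C^2(\Delta^2;R)=R\cdot s$. The first step is to evaluate the target cochain on $s$: combining \eqref{eq:S1} with \eqref{eq:cup-simplicial} gives
\[
\bigl(u_3\cup_1(u_1\cup u_2)\bigr)(s)\;=\;-\,u_3(e_3)\,u_1(e_1)\,u_2(e_2),
\]
so this cochain depends genuinely on the value $u_3(e_3)$ of $u_3$ on the \emph{long} edge $e_3$, the edge that is neither the front face nor the back face of $s$.

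The second step is to show that no admissible right-hand side can see that value. By ``a linear combination of cup products and cup products of cup-one products'', modeled on the right-hand side of the left Hirsch identity \eqref{eq:hirsch-1c}, I mean an $R$-linear combination of words $p\cup q$ in which each of $p,q$ is built from $u_1,u_2,u_3$ by iterated use of the degree-one product $\cup_1\colon A^1\otimes_R A^1\to A^1$, where a length-one word is just a bare $u_i$. By \eqref{eq:cup1-simplicial}, such a $p$ evaluates on any $1$-simplex $e$ to the product of the values on $e$ of its constituent $u_i$; hence $p(e_1)$ and $q(e_2)$ are monomials in the values $u_i(e_1)$ and $u_i(e_2)$, and by \eqref{eq:cup-simplicial} the word $p\cup q$ evaluates on $s$ to $p(e_1)\,q(e_2)$. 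Consequently, for any such linear combination $L$, the scalar $L(s)$ is a polynomial in the values $\{u_i(e_1),u_i(e_2)\}_{i=1,2,3}$ only, with no dependence on $u_3(e_3)$.

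The last step compares the two. Fix $u_1$ with $u_1(e_1)=1$ and $u_1(e_2)=u_1(e_3)=0$, fix $u_2$ with $u_2(e_2)=1$ and its other two values $0$, and let $u_3$ range over the two cochains that vanish on $e_1$ and $e_2$ and take the value $0$, respectively $1$, on $e_3$. Then $L(s)$ is the same scalar for both choices, whereas $\bigl(u_3\cup_1(u_1\cup u_2)\bigr)(s)$ equals $0$ for the first and $-1$ for the second; since $-1\ne 0$ in $R$ (we assume, as customary, that $R\ne 0$), equality of cochains fails for at least one of the two choices, which is exactly what the statement asserts. I expect the only delicate point to be pinning down the \emph{scope} of the excluded expressions: they must be built from $\cup$ and the degree-one $\cup_1$ alone, with no $d$, no $\circ$, and no higher $\cup_1\colon A^1\otimes_R A^2\to A^2$, since otherwise the right Hirsch identity \eqref{eq:otid} would itself furnish such a formula. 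Everything else is a direct computation with the cochain-level formulas for $\cup$ and $\cup_1$.
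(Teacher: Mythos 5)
Your proof is correct and follows essentially the same route as the paper's: both work on the standard $2$-simplex with $u_1,u_2,u_3$ supported on $e_1,e_2,e_3$ respectively, and both exploit that the left-hand side depends on $u_3(e_3)$ while every admissible right-hand side, evaluating via \eqref{eq:cup-simplicial}--\eqref{eq:cup1-simplicial}, sees only values on $e_1$ and $e_2$. The only (minor) difference is that you isolate this blindness to $u_3(e_3)$ as a structural fact covering arbitrary iterated degree-one $\cup_1$-words, whereas the paper enumerates the candidate terms explicitly and lets $u_3(e_3)=n$ vary to force the contradiction $-n=\mathrm{const}$.
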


\begin{proof}
We need to show that there are no constants
$\alpha(i_1,i_2 | i_3)$, $\alpha(j_1 | j_2, j_3)$,
$\alpha (k_1,k_2 | k_3, k_4)$, and $\alpha(\ell_i|\ell_2)$
with
$1 \le i_r \le 3$, $1 \le j_r \le 3$, $1 \le k_r \le 3$, and
$1 \le \ell_r\le 3$,
such that the equation
\begin{equation}
\label{eq:no-hirsch}
\begin{split}
u_3 \cup_1 (u_1 \cup u_2)
		&= \sum_{1 \le i_\ell \le 3} 
		     \alpha(i_1, i_2 | i_3) (u_{i_1} \cup_1 u_{i_2})\cup u_{i_3}
	   + \sum_{1 \le j_\ell \le 3}
		     \alpha (j_1 | j_2,j_3) u_{j_1} \cup (u_{j_2} \cup_1 u_{j_3}) \\
		&\quad  + \sum_{1 \le k_\ell \le 3} \alpha (k_1,k_2 | k_3, k_4)
					(u_{k_1} \cup_1  u_{k_2}) \cup (u_{k_3} \cup_1 u_{k_4})
				 + \sum_{1 \le l_i \le 3} 
					\alpha(l_i | l_j) u_{l_i}\cup u_{l_j}
\end{split}
\end{equation}
holds for all $1$-cochains, $u_1, u_2, u_3$, on an arbitrary $\Delta$-complexes $X$. 

Let $X$ be the standard $2$-simplex $[0,1,2]$, with front face $e_1 = [0,1]$,
back face $e_2 = [1,2]$, and third face $e_3 = [0,2]$. Let $n$
be an integer and define $1$-cochains on $[0,1,2]$ by
\begin{align*}
u_1(e_i) &= 1\text{ if $i=1$ and $0$ otherwise},\\
u_2(e_i) &= 1\text{ if $i=2$ and $0$ otherwise},\\
u_3(e_i) &= n\text{ if $i=3$ and $0$ otherwise}.
\end{align*}
By direct computation of cup and cup-one products of these cochains,
it follows that the right hand side of equation \eqref{eq:no-hirsch}
is equal to
\begin{align*}
&\alpha(1,1 |2) (u_1 \cup_1 u_1) \cup u_2
+ \alpha(1|2,2)u_1 \cup (u_2 \cup_1 u_2)\\
& \qquad+ \alpha(1,1 | 2,2)  (u_1 \cup_1 u_1) \cup (u_2 \cup_1 u_2)
+ \alpha(1|2)u_1 \cup u_2\\
& \quad =
				\alpha(1,1 |2) 
+ \alpha(1|2,2)
+ \alpha(1,1 | 2,2) 
+ \alpha(1|2).
\end{align*}
From the definition of the cup-one product, we have that
\[
u_3 \cup_1 (u_1 \cup u_2)([0,1,2]) 
			= -u_3(e_3)\cdot (u_1\cup u_2)([0,1,2]) = -n.
\]
Thus, assuming that equation \eqref{eq:no-hirsch} holds in our 
situation, we have that
\begin{equation}
\label{eq:n-alpha}
-n  = \alpha(1,1 |2) + \alpha(1|2,2)+ \alpha(1,1 | 2,2) + \alpha(1|2)
\end{equation}
for all integers $n$. This is a contradiction, since the right hand side
of equation \eqref{eq:n-alpha} is a constant, and so we are done.
\end{proof}

\section{Non-commutative differential forms}
\label{sect:non-comm}

The second of our two motivating examples for the definition
 of cup-one dgas is the algebra of non-commutative differential
 forms. In this section we review the definition and properties 
 of non-commutative differential forms and show that they
 are cup-one differential graded algebras.

\subsection{The algebra of non-commutative differential forms}
\label{subsec:Omega}
We start this section with a brief review of the theory of non-commutative 
differential forms, as developed by H.~Cartan \cite{Cartan-1976}, 
A.~Connes \cite{Connes-1985}, and M.~Karoubi 
\cite{Karoubi-tams1995, Karoubi-asens1995}, 
as well as some more recent developments due to 
N.~Battikh \cite{Battikh-2007, Battikh-2011} and 
A.~Abbassi \cite{Abbassi-2013}. 

Let $R$ be a commutative ring with unit $1$,  and let $A$ be an $R$-algebra
with injective structure map $R\to A$. 
For each $n\ge 0$, let $T^n(A)=A\otimes_R \cdots \otimes_R A$ be the 
$(n+1)$-fold tensor product of $A$ (over $R$).  

\begin{proposition}
\label{prop:tensor-dga}
The $R$-module $T^*(A)=\bigoplus_{n\ge 0} T^n(A)$ has the structure, 
of an $R$-dga with multiplication $T^n\otimes_R T^m \to T^{n+m}$ 
and differential $D\colon T^n\to T^{n+1}$ given by 
\begin{align}
\label{eq:ta-dga-1}
&(a_0\otimes a_1 \otimes \cdots  \otimes  a_n)\cup
(b_0\otimes b_1 \otimes \cdots  \otimes  b_m) =
a_0\otimes a_1 \otimes \cdots  \otimes  a_nb_0
\otimes b_1 \otimes \cdots  \otimes  b_m,
\\
&D(a_0\otimes a_1 \otimes \cdots  \otimes  a_n)=
1\otimes a_1 \otimes \cdots  \otimes  a_n - a_0\otimes 1 \otimes \cdots  \otimes  a_n
+ \cdots \\ 
&\hspace*{3in} + (-1)^{n+1} a_0 \otimes \cdots  \otimes a_{n}\otimes  1.\notag
\end{align}
\end{proposition}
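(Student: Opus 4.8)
The plan is to verify the three dga axioms directly from the formulas \eqref{eq:ta-dga-1}: associativity and unitality of the multiplication, the identity $D^2 = 0$, and the graded Leibniz rule. First I would check that the multiplication is well-defined, associative, and has unit $1 \in A = T^0(A)$. This is immediate from the definition: the product of $a_0 \otimes \cdots \otimes a_n$ with $b_0 \otimes \cdots \otimes b_m$ simply concatenates the two tensor strings while multiplying the ``inner'' pair $a_n b_0$ in $A$; associativity of this operation reduces to associativity of the multiplication in $A$, and $1 \in T^0(A)$ clearly acts as a two-sided identity since multiplying by it just merges $1$ into the adjacent slot. One should note the $R$-bilinearity (using that the tensor products are over $R$ and that $R$ is central in $A$), so that the map factors through $T^n(A) \otimes_R T^m(A)$ as claimed.

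Next I would address $D^2 = 0$. The standard approach is to recognize $D$ as (a sign-twisted version of) the Hochschild-type coface alternating sum: writing $D = \sum_{j=0}^{n+1} (-1)^j s_j$, where $s_j$ inserts a $1$ in the $j$-th slot of $a_0 \otimes \cdots \otimes a_n$, one checks the cosimplicial-type relation $s_k s_j = s_j s_{k+1}$ for $j \le k$ (inserting two $1$'s in either order gives the same result). A routine index manipulation with these relations then yields $D^2 = 0$ by the usual cancellation-in-pairs argument. Alternatively, and perhaps more transparently, one can observe that $T^*(A)$ with $D$ is exactly the (reduced or full) bar-type construction / the simplicial cochain complex of the simplex built from $A$, for which $D^2=0$ is classical; but I would likely just do the direct sign bookkeeping to keep the proof self-contained.

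Then I would verify the graded Leibniz rule \eqref{eq:derivation}: $D(\alpha \cup \beta) = D\alpha \cup \beta + (-1)^{n} \alpha \cup D\beta$ for $\alpha \in T^n(A)$, $\beta \in T^m(A)$. The cleanest way is to expand both sides on elements $\alpha = a_0 \otimes \cdots \otimes a_n$ and $\beta = b_0 \otimes \cdots \otimes b_m$. On the left, $D$ applied to the concatenated string $a_0 \otimes \cdots \otimes a_n b_0 \otimes \cdots \otimes b_m$ inserts a $1$ in each of the $n+m+2$ slots with alternating signs. The insertions among the first block of slots (positions $0$ through $n$, before the merged slot) reassemble, after using the product formula in reverse, into $D\alpha \cup \beta$; the insertions among the last block give $(-1)^{n} \alpha \cup D\beta$; the subtlety is the slot containing the merged element $a_n b_0$ and the two boundary terms $\pm(\cdots \otimes a_n \otimes 1) \cup \beta$ and $\pm \alpha \cup (1 \otimes \cdots)$ --- these are precisely the ``extra'' terms in $D\alpha \cup \beta$ and $(-1)^n \alpha \cup D\beta$ that do not come from the merged string, so they must cancel against each other. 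Tracking this cancellation, together with getting the global sign $(-1)^n$ right, is the main obstacle; everything else is formal. I would carry it out by isolating the three groups of terms (left-block insertions, right-block insertions, and the trio of ``seam'' terms) and checking the seam terms cancel, leaving exactly the two desired summands.

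Throughout, I would remark that all maps are manifestly $R$-linear and degree-compatible ($T^n \otimes T^m \to T^{n+m}$ and $D\colon T^n \to T^{n+1}$), so once associativity, unitality, $D^2=0$, and Leibniz are in hand, the proposition follows. A final sentence would note that when $A$ is commutative this $D$ restricts on $T^0(A) = A$ to $a \mapsto 1 \otimes a - a \otimes 1$, which vanishes on $R$, consistent with the injective structure map hypothesis, though this is not needed for the statement.
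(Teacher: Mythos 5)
Your verification is correct, but note that the paper itself offers no proof of this proposition: it is recorded as a known construction from the theory of non-commutative differential forms (Cartan, Connes, Karoubi), so there is no argument in the text to compare against. Your direct check supplies the standard one. The three ingredients are all handled properly: associativity/unitality of the concatenate-and-merge product reduces to associativity of $A$; $D^2=0$ follows from the pairwise cancellation coming from the insertion identities (your parenthetical ``inserting two $1$'s in either order gives the same result'' is the right content, though be careful with the composition convention when you write the identity down -- with the usual right-to-left convention it reads $s_i s_j = s_{j+1} s_i$ for $i\le j$); and for the Leibniz rule you correctly isolate the only nontrivial point, namely that the two seam terms $(-1)^{n+1}(a_0\otimes\cdots\otimes a_n\otimes 1)\cup\beta$ and $(-1)^{n}\,\alpha\cup(1\otimes b_0\otimes\cdots\otimes b_m)$ both equal the unmerged concatenation $a_0\otimes\cdots\otimes a_n\otimes b_0\otimes\cdots\otimes b_m$ with opposite signs and cancel, while the remaining insertions match up bijectively with those of $D(\alpha\cup\beta)$. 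One small typographical caution: the displayed formula for $D$ in the statement drops the $a_0$ from its first term; the ``in particular'' examples following the proposition show the intended first term is $1\otimes a_0\otimes\cdots\otimes a_n$, and your argument implicitly uses the corrected version, as it should.
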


In particular, $D(a)=1\otimes a-a\otimes 1$ and 
$D(a_0\otimes a_1)=1\otimes a_0\otimes a_1-a_0\otimes 1\otimes a_1+
a_0\otimes a_1 \otimes 1$.

Now set $\Omega^0(A)=A$ and let $\Omega^1(A)$ be the kernel of the 
multiplication map $A\otimes_R A\to A$. 
Clearly, the $R$-module $\Omega^1(A)$ 
is also an $A$-bimodule. For each $n\ge 1$, we let  
\begin{equation}
\label{eq:nc-Omega}
\Omega^n(A)= \Omega^1(A) \otimes_A \cdots \otimes_A \Omega^1(A) 
\end{equation}
be the $n$-fold tensor product of $\Omega^1(A)$ over $A$. 
Since $A\otimes_A  A=A$, we have that $\Omega^n(A)\subset T^n(A)$. 
Thus, the $R$-module 
$\Omega^*(A)=\bigoplus_{n\ge 0} \Omega^n(A)$ inherits a natural $R$-dga 
structure from $T^*(A)$; we denote its differential by $d$. 

Writing $\overline{A}=A/R$, it is readily seen that the map 
$A\otimes_R \overline{A}\to \Omega^1(A)$, $x\otimes \bar{y}\mapsto xdy$ is 
an isomorphism.  Hence, 
\begin{equation}
\label{eq:omega-n-A}
\Omega^n(A)\cong A \otimes_R \overline{A} 
\otimes_R \cdots \otimes_R \overline{A} , 
\end{equation}
where the number of factors of $\overline{A}$ is $n$.
The elements of this $R$-module, known 
as {\em non-commutative forms of degree $n$}, 
can be viewed as linear combinations of elements of the form 
$a_0 da_1\cdots da_n$.  
The differential $d$ given by 
\begin{equation}
\label{eq:d-omega}
d(a_0 da_1\cdots da_n)= da_0 da_1\cdots da_n.
\end{equation}

The canonical projection, $J\colon T^*(A)\to \Omega^*(A)$, 
$a_0\otimes a_1 \otimes \cdots  \otimes  a_n\mapsto a_0 da_1\cdots da_n$ is a 
morphism of graded $R$-modules which is compatible with the differentials, but 
not with the algebra structures.  Nevertheless, $J(\alpha\beta)=J(\alpha)J(\beta)$
if $\beta$ is a cocycle in $T^*(A)$.

\subsection{Cup-$i$ products in $T(A)$}
\label{subsec:TA-cup-i}
In \cite[\S 3]{Battikh-2007}, Battikh defines $R$-linear maps 
$\cup_i \colon T^{n}(A)\otimes_R  T^m(A)\to T^{n+m-i}(A)$ 
which vanish if $i>\min(n,m)$, coincide with the multiplication 
in $T^*(A)$ if $i=0$, and satisfy Steenrod's identities, 
\begin{equation}
\label{eq:cup-i-bat}
D(\alpha\cup_i \beta)= (-1)^{n+m-i} \alpha \cup_{i-1} \beta +
(-1)^{nm+n+m} \beta\cup_{i-1} \alpha
+ D\alpha \cup_i \beta + (-1)^{n} \alpha \cup_i D\beta.
\end{equation}
While these maps are rather difficult to write down explicitly, 
here is the definition of the $\cup_1$ map. 
Let $\alpha=a_0 \otimes \cdots \otimes  a_p$ and 
$\beta=b_0 \otimes \cdots \otimes b_q$; 
then (\cite[Proposition 3.1]{Battikh-2007}):
\begin{equation}
\label{eq:cup-one-bat}
\begin{split}
&(a_0 \otimes \cdots \otimes a_p)
\cup_1
(b_0 \otimes \cdots \otimes b_q) = \sum_{i=0}^{p-1}(-1)^{(p-i)(q+1)}
a_0 \otimes \cdots \otimes a_{i-1} \otimes a_ib_0\otimes  \\
&\hspace{2.5in} b_1\otimes \cdots \otimes b_{q-1}
\otimes b_qa_{i+1}\otimes a_{i+2} \otimes \cdots
\otimes a_p.
\end{split}
\end{equation}
In particular, 
\begin{equation}
\label{eq:cup1-t-alg}
(a_0 \otimes a_1)
\cup_1
(b_0 \otimes b_1) = 
a_0 b_0 \otimes b_1a_1.
\end{equation}

Now suppose $A$ is a commutative $R$-algebra. Then the (left) 
Hirsch formula holds in all degrees (\cite[Proposition 4.4]{Abbassi-2013}); 
namely, if $\alpha_i\in T^{n_i}(A)$, then
\begin{equation}
\label{eq:hirsch-abb}
(\alpha_1 \cup \alpha_2)\cup_1 \alpha_3 =
\alpha_1 \cup (\alpha_2\cup_1 \alpha_3) +
(-1)^{n_1(n_2+1)} (\alpha_1\cup_1 \alpha_3)\cup \alpha_2.
\end{equation}
Since $T(A)$ satisfies equations \eqref{eq:cup-i-bat} and \eqref{eq:hirsch-abb}, 
it is a dga with Steenrod products, in the sense of Definition \ref{def:cup-i-algebra}.
Moreover, formula \eqref{eq:cup1-t-alg} shows that the $\cup_1$ product on $T^1(A)$ 
is associative and commutative; therefore, it gives $R\oplus T^1(A)$ the structure of 
a commutative ring with unit. Hence, $T(A)$ is a graded algebra with cup-one products, 
in the sense of Definition \ref{def:gr-cup1}.

\subsection{Cup-$i$ products in $\Omega(A)$}
\label{subsec:Omega-cup-i}
The $\cup_i$ operations on $T^\ast(A)$ induce
$\cup_i$ operations on $(\Omega^*(A),d)$ that
satisfy Steenrod's identites, cf.~\cite[Proposition 3.6]{Battikh-2007}. 
The $\cup_1$ operation on $\Omega^1(A)$ takes the explicit form 
indicated in the next lemma.

\begin{lemma}
\label{lem:cup1-omega1}
Let $a_0da_1$ and $b_0db_1$ be two elements in $\Omega^1(A)$.  Then,
\begin{align*}
a_0da_1 \cup_1 b_0db_1
&=
a_0d(a_1b_0b_1)-a_0b_1d(a_1b_0)-a_0a_1b_0db_1
					-a_0a_1d(b_0b_1) \\
		&\quad 
+a_0a_1b_1db_0 + a_0a_1b_0db_1.
\end{align*}
In particular, $da \cup_1 db = d(ab) - b da - a db$, and so 
$da \cup_1 da = d(a^2) -2a da$.
\end{lemma}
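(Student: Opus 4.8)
The plan is to carry out the computation inside the tensor dga $T^*(A)$ of Proposition \ref{prop:tensor-dga}, where the $\cup_1$-product of two elements of $T^1(A)$ is given by the explicit bilinear formula \eqref{eq:cup1-t-alg}, and then transport the result down to $\Omega^*(A)$. First I would recall that $\Omega^1(A)$ sits inside $T^1(A)=A\otimes_R A$ and that the differential $d$ of $\Omega^*(A)$ is the restriction of $D$, so that $da_1 = Da_1 = 1\otimes a_1 - a_1\otimes 1$; multiplying on the left by the $0$-form $a_0$ in $T^*(A)$ via \eqref{eq:ta-dga-1}, the $1$-form $a_0\,da_1$ becomes the element $a_0\otimes a_1 - a_0a_1\otimes 1$ of $T^1(A)$, and likewise $b_0\,db_1 = b_0\otimes b_1 - b_0b_1\otimes 1$. (One could also first strip off $a_0$ by the left Hirsch identity \eqref{eq:hirsch-abb}, using that $\cup_1$ with the $0$-form $a_0$ vanishes, but the $T^*(A)$-computation handles everything at once.)

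Next I would expand $(a_0\otimes a_1 - a_0a_1\otimes 1)\cup_1(b_0\otimes b_1 - b_0b_1\otimes 1)$ by $R$-bilinearity of $\cup_1$ into four products of simple tensors and evaluate each one by \eqref{eq:cup1-t-alg}, i.e. $(x_0\otimes x_1)\cup_1(y_0\otimes y_1)=x_0y_0\otimes y_1x_1$, obtaining a sum of four simple tensors in $T^1(A)$. To return to $\Omega^*(A)$ I would then rewrite each simple tensor $x\otimes y$ occurring there modulo $A\otimes 1$, equivalently apply the canonical projection $J\colon T^*(A)\to\Omega^*(A)$, which sends $x\otimes y$ to $x\,dy$ and kills $A\otimes 1$, and finally regroup using that $d$ is a derivation (the Leibniz rule \eqref{eq:derivation}, equivalently \eqref{eq:d-omega}) together with commutativity of $A$, so that the $1$-forms that appear are exactly $d(a_1b_0b_1)$, $d(a_1b_0)$, $d(b_0b_1)$, $da_1$, $db_0$, $db_1$, as in the statement. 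The ``in particular'' then follows by specializing $a_0=b_0=1$, $a_1=a$, $b_1=b$: the term $a_0a_1b_1\,db_0$ vanishes since $d(1)=0$ and the remaining five collapse to $da\cup_1 db = d(ab)-b\,da-a\,db$; setting $b=a$ and using $a\,da+a\,da=2a\,da$ gives $da\cup_1 da = d(a^2)-2a\,da$.

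The computation is routine, so the only real obstacle is bookkeeping: keeping the two notations straight ($T^*(A)$-tensors versus the forms $a_0\,da_1$), arranging that the spurious $\otimes 1$ contributions cancel at the level of $T^1(A)$, and choosing the particular Leibniz rewriting that produces exactly the grouping displayed in the lemma. Since the presentation of an element of $\Omega^1(A)$ as a sum $\sum x_i\,dy_i$ is not unique, I would take care to check the displayed equality in $T^1(A)$ rather than only formally.
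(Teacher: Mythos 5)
Your overall strategy --- compute in $T^*(A)$ with the explicit formula \eqref{eq:cup1-t-alg} and push down to $\Omega^*(A)$ --- is exactly how the paper handles the \emph{special} case $da\cup_1 db$, but for the general case the paper takes a different route, and your route, carried out honestly, does not land on the stated formula. The four-term bilinear expansion you propose gives
\begin{align*}
(a_0\ot a_1 - a_0a_1\ot 1)\cup_1(b_0\ot b_1 - b_0b_1\ot 1)
&= a_0b_0\ot b_1a_1 - a_0b_0b_1\ot a_1 - a_0a_1b_0\ot b_1 + a_0a_1b_0b_1\ot 1\\
&= a_0b_0\,d(a_1b_1) - a_0b_0b_1\,da_1 - a_0a_1b_0\,db_1
\end{align*}
(using commutativity of $A$ and $J(x\ot 1)=0$). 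This is already in normal form, and it is \emph{not} the right-hand side of the lemma: since $\Omega^1(A)\cong A\ot_R\overline{A}$ is free as a left $A$-module on the symbols $d\bar y$, there is no relation $d(xy)=x\,dy+y\,dx$ available in $\Omega^1(A)$ (that identity fails for non-commutative forms --- indeed $da\cdot b=d(ab)-a\,db$, formula \eqref{eq:dab}, is the whole point), so the terms $d(a_1b_0b_1)$, $d(a_1b_0)$, $d(b_0b_1)$, $db_0$ in the statement cannot be manufactured by ``regrouping via Leibniz'' from second tensor factors that are only $b_1a_1$, $a_1$, $b_1$, $1$. Concretely, in the function model $A=\Map(S,R)$ your expansion evaluates at a $1$-simplex $(x,y)$ to $a_0(x)\,b_0(x)\,(a_1(y)-a_1(x))(b_1(y)-b_1(x))$, whereas the lemma's right-hand side evaluates to $a_0(x)\,b_0(y)\,(a_1(y)-a_1(x))(b_1(y)-b_1(x))$; these are different elements of $\Omega^1(A)$. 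So the final step of your proposal, which you describe as routine bookkeeping, is in fact where the argument breaks.

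The paper's own proof avoids the four-term expansion: it first proves the normal-form identity $da\cdot b=d(ab)-a\,db$ and the special case $da\cup_1 db=d(ab)-b\,da-a\,db$ by direct tensor computation (this part coincides with your method), and then reduces the general case by writing $a_0\,da_1\cup_1 b_0\,db_1=a_0\bigl[(da_1\cdot b_0)\cup_1 db_1\bigr]$, i.e.\ by pulling the left coefficient $b_0$ of the second factor across $\cup_1$ as a right coefficient of the first factor, and expanding. That step uses left $A$-linearity of $\cup_1$ in the first slot (which the tensor formula does satisfy) together with the balancedness $u\cup_1(b_0v)=(ub_0)\cup_1 v$ --- and the latter is precisely what the tensor-level formula \eqref{eq:cup1-t-alg} does \emph{not} satisfy, which is why your computation and the paper's disagree. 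Before trusting either derivation you would need to pin down exactly which $\cup_1$-product on $\Omega^1(A)$ (restriction of Battikh's product on $T^1(A)$, or the $A$-balanced product determined by the special case) the lemma is about; as written, your proposal silently assumes the first while the statement being proved requires the second.
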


\begin{proof}
Recall that $\Omega^\ast=\Omega^*(A)$ is an $A$-bimodule. Furthermore, 
the differential $d\colon \Omega^0\to \Omega^1$ satisfies the product rule, 
$d(ab) = (da)\cdot b + a\cdot (db)$. 
The first step is to write $da \cdot b$ as an element in $\Omega^1$
in normal form (i.e., as a sum of terms of the form $x_i dy_i$).  This 
can be done using the product rule, or, alternatively, the following computation:
\begin{align}
\label{eq:dab}
da \cdot b
		& = (1 \otimes a - a \otimes 1)b \notag
		 = 1 \otimes ab - a \otimes b\notag\\
		& = 1 \otimes ab - ab \otimes 1 + ab \otimes 1 - a \otimes b\\
		& = d(ab) + a(b \otimes 1 - 1 \otimes b) = d(ab) - adb. \notag
\end{align}
Therefore,
\begin{align*}
da \cup_1 db 
		& = (1 \otimes a - a \otimes 1)\cup_1 (1 \otimes b - b \otimes 1)
		 = 1 \otimes ab - b \otimes a - a \otimes b + ab \otimes 1\\
		& = 1 \otimes ab - ab \otimes 1 + ab \otimes 1-b \otimes a + 
		ba \otimes 1 - ab \otimes 1-a(1\otimes b - b \otimes 1)\\
		& = d(ab) -b(1\otimes a - a \otimes 1) - a db
		 = d(ab) - bda - a db.
\end{align*}
The general case follows along the same lines:
\begin{align*}
a_0da_1 \cup_1 b_0 db_1
		& = a_0 (da_1\cdot b_0) \cup_1 db_1\\
		& = a_0 (d(a_1b_0) - a_1db_0) \cup_1 db_1\\
		& = a_0 [d(a_1b_0)\cup_1 db_1] - a_0a_1 (db_0 \cup_1 db_1)\\
		& = a_0[d(a_1b_0b_1)-b_1d(a_1b_0)- a_1b_0db_1]-
		       a_0a_1[d(b_0b_1)-b_1db_0-b_0db_1]\\
		& = a_0d(a_1b_0b_1)-a_0b_1d(a_1b_0)-a_0a_1b_0db_1
					-a_0a_1d(b_0b_1)\\
		&\quad +a_0a_1b_1db_0 + a_0a_1b_0db_1,
\end{align*}
and we are done.
\end{proof}

\subsection{Non-commutative differential forms as cup-one dgas}
\label{subsec:noncomm-cup1d}
Let $A$ be a commutative $R$-algebra, and let $(\Omega(A),d)$ be 
the dga of non-commutative differential forms on $A$. 
Note that $du \in D(\Omega(A))$ for all $u \in \Omega^1(A)$.

\begin{theorem}
\label{thm:omega-otid}
The differential graded algebra $(\Omega(A),d)$ is a $\cup_1$-dga.
\end{theorem}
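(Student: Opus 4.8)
The plan is to invoke Lemma \ref{lem:sth-cup1}, which reduces the claim to three things: the underlying algebra $\Omega(A)$ is a graded algebra with cup-one products; there is a well-defined $\circ$-map on $D^2(\Omega(A))$ satisfying \eqref{eq:circ-op}; and the right Hirsch identity \eqref{eq:otid} holds in degree $1$. The first point is already established in subsection \ref{subsec:TA-cup-i} for $T(A)$, but I would need to record that it descends to $\Omega(A)$: by formula \eqref{eq:cup1-t-alg} (or directly from Lemma \ref{lem:cup1-omega1} applied to exact forms, which gives $da\cup_1 db = d(ab)-bda-adb$) one checks that $\cup_1\colon \Omega^1(A)\otimes_R\Omega^1(A)\to\Omega^1(A)$ is associative and commutative, so $R\oplus\Omega^1(A)$ is a commutative ring; the left Hirsch identity comes from \eqref{eq:hirsch-abb}; and the Steenrod identities come from \cite[Proposition 3.6]{Battikh-2007}. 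Thus \eqref{eq:steenrod-again} holds, and it only remains to produce $\circ$ and verify \eqref{eq:otid}.

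For the $\circ$-product, the natural guess — by analogy with \eqref{eq:circ-op} and with the cochain case \eqref{eq:circ-cochains} — is to use the formula $da\cup_1 db = d(ab)-bda-adb$ to rewrite an arbitrary decomposable $2$-form. Every element of $D^2(\Omega(A))$ is a sum of products $u\cup v$ with $u,v\in\Omega^1(A)$, and every $1$-form is an $A$-linear combination of exact forms $da$; so it suffices to define $\circ$ on generators by $(a_0\,da_1)\circ(b_0\,db_1) = a_0b_0\,d(a_1b_1) - a_0b_0b_1\,da_1 - a_0a_1b_0\,db_1$ (equivalently, $(da_1)\circ(db_1)$ extended bilinearly over $A$ on the left in each slot, with the correct bookkeeping), and then check that with this definition $(u\cup v)\circ(w\cup z) = (u\cup_1 w)\cup(v\cup_1 z)$ holds for all $1$-forms. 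The key subtlety is \emph{well-definedness}: one must verify that this prescription respects the $A$-bimodule relations defining $\Omega^1(A)$ (i.e., $d(ab) = a\,db + b\,da$ and the Leibniz-type identities), so that $\circ$ is a genuine $R$-linear map $D^2(\Omega(A))\otimes_R D^2(\Omega(A))\to D^2(\Omega(A))$ and not merely defined on a presentation. I expect this to be the main obstacle, and the cleanest route is probably to define $v\circ w$ for \emph{all} $2$-forms $v,w$ (not just decomposable ones) by an explicit formula on $a_0\,da_1\,da_2$-type generators mirroring \eqref{eq:circ-cochains}, prove it is well-defined there, and then observe it restricts appropriately — exactly the strategy used in the proof of Theorem \ref{thm:cochain-cup1d}.

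Finally, for the right Hirsch identity \eqref{eq:otid}: since $du\in D^2(\Omega(A))$ automatically for every $u\in\Omega^1(A)$ (as noted before the theorem), one must show $u\cup_1(v\cup w) = du\circ(v\cup w) - (v\cup w)\cup_1 u$ for all $u,v,w\in\Omega^1(A)$. By bilinearity it is enough to treat $u = a_0\,da_1$, $v = b_0\,db_1$, $w = c_0\,dc_1$, and then the identity becomes a finite computation: expand the left side using \eqref{eq:cup-one-bat} (the $\cup_1\colon\Omega^1\otimes\Omega^2\to\Omega^2$ map inherited from Battikh's formula), expand $du\circ(v\cup w)$ using the explicit $\circ$ from the previous step together with $du = da_0\,da_1$, expand $(v\cup w)\cup_1 u$ again via \eqref{eq:cup-one-bat}, reduce all three to normal form $\sum x_i\,dy_i\,dz_i$ in $\Omega^2(A)$ using the product rule, and check the terms cancel. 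This is the routine-calculation part; the only thing to watch is the sign conventions matching Lemma \ref{lem:cup1-omega1}. Once \eqref{eq:circ-op} and \eqref{eq:otid} are in hand, Lemma \ref{lem:sth-cup1} gives the $\cupd$ formula and hence that $(\Omega(A),d)$ is a $\cup_1$-dga, completing the proof.
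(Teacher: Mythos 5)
Your plan follows the paper's proof essentially step for step: inherit the cup-one structure on $\Omega(A)$ from $T(A)$ via Battikh's Steenrod identities and Abbassi's Hirsch formula, define $\circ$ by an explicit componentwise formula on all of $T^2(A)$ (the paper uses $(a_0\otimes a_1\otimes a_2)\circ(b_0\otimes b_1\otimes b_2)=a_0b_0\otimes a_1b_1\otimes a_2b_2$, which makes your well-definedness worry evaporate, since $\Omega^2(A)\subset T^2(A)$ and one only has to check that the image and the restriction to $D^2$ behave as required), verify the right Hirsch identity \eqref{eq:otid} by direct computation on representatives $u=a_0\,da_1$, $v=b_0\otimes b_1$, $w=c_0\otimes c_1$ in $T^*(A)$, and close with Lemma \ref{lem:sth-cup1}. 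The one slip is your first displayed guess for $\circ$, which takes two $1$-forms to a $1$-form and therefore cannot be the map $D^2(\Omega(A))\otimes_R D^2(\Omega(A))\to D^2(\Omega(A))$ (it is really the $\cup_1$-product of Lemma \ref{lem:cup1-omega1}) --- but you discard it yourself in favor of the correct strategy, so the outline stands.
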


\begin{proof} 
The algebra $\Omega(A)$ inherits from $T(A)$ the structure of a 
graded algebra with cup-one products. Furthermore, the Steenrod 
identity \eqref{eq:steenrod-again} holds in the dga $(\Omega(A),d)$. 

We define an $R$-linear map $\circ\colon T^2(A) \otimes_R T^2(A) \to T^2(A)$ by
\begin{equation}
\label{eq:circ-omega}
(a_0 \otimes a_1 \otimes a_2) \circ (b_0 \otimes b_1 \otimes b_2)
		= a_0b_0 \otimes a_1b_1 \otimes a_2b_2\, .
\end{equation}
It is clear that this map induces a map 
$\circ\colon \Omega^2(A) \otimes_R \Omega^2(A) \to \Omega^2(A)$.
A straightforward computation using
equation \eqref{eq:ta-dga-1} for the cup product and 
equation \eqref{eq:cup-one-bat} for the cup-one product 
shows that 
$(u_1 \cup u_2)\circ (v_1 \cup v_2)
			= (u_1 \cup_1 v_1) \cup (u_2 \cup v_2)$
for all $u_1,u_2,v_1,v_2 \in \Omega^1(A)$ and thus 
the map $\circ$ restricts to a well-defined map 
$\circ\colon D^2(\Omega(A)) \otimes_R D^2(\Omega(A)) \to D^2(\Omega(A))$ 
which obeys formula \eqref{eq:circ-op}.

In view of Lemma \ref{lem:sth-cup1}, we are left with showing that 
 equation \eqref{eq:otid} holds for any elements 
$u,v,w\in \Omega^1(A)$ with $du \in D^2(\Omega(A))$. 
As in the proof of Theorem \ref{thm:cochain-cup1d}, 
we will show that 
\begin{equation}\label{eq:dc1}
u\cup_1(v \cup w) = -(u\cup_1 v) \cup w
+ \sum_i (u_{1,i}\cup_1 v) \cup (u_{2,i} \cup_1 w) 
- v \cup (u \cup_1 w)
\end{equation}
where $du = \sum_i u_{1,i} \cup u_{2,i}$.

From the definition of $\Omega(A)$, it is
sufficient to show that equation \eqref{eq:dc1} holds in $T^\ast(A)$ via the
canonical projection $J \colon T^\ast(A) \to \Omega^\ast(A)$.
For this we can identify $u, v, w$ with the following preimages under this 
projection: $u = a_0 da_1 = a_0 \otimes a_1 -a_0a_1 \otimes 1$,  
$v = b_0 \otimes b_1$, and $w = c_0 \otimes c_1$, respectively.

Then using  the formula $d(a_0da_1) = da_0 \cup da_1$ and the 
formulas for the cup product and cup-one product in $T^\ast(A)$, it follows that
\begin{align*}
u \cup_1 (v \cup w)
	& = - (a_0b_0 \otimes b_1c_0 \otimes c_1a_1)
			+	 (a_0a_1b_0 \otimes b_1c_0 \otimes c_1)\\[3 pt]
- (u \cup_1 v) \cup w
     & = -a_0b_0 \otimes a_1b_1c_0 \otimes c_1
						+ a_0a_1b_0 \otimes b_1c_0 \otimes c_1\\[3 pt]
(da_0 \cup_1 v) \cup (da_1 \cup_1 w)									
     &=  b_0 \otimes a_0b_1 c_0 \otimes a_1c_1
				- a_0b_0 \otimes b_1c_0 \otimes a_1c_1\\
		& \quad		+ a_0b_0 \otimes a_1b_1c_0 \otimes c_1
				- b_0 \otimes a_0a_1b_1c_0 \otimes c_1\\[3 pt]
-v \cup (u \cup_1 w)
		& = - b_0 \otimes a_0b_1c_0 \otimes a_1c_1 + b_0 
						\otimes a_0a_1b_1c_0 \otimes c_1.
\end{align*}
Equation  \eqref{eq:dc1} follows and the proof is complete.
\end{proof}

The following example is analogous to an example in
\cite{Abbassi-2013} to indicate that the naive right-handed 
analogue of the Hirsch identity \eqref{eq:steenrod-1c} does 
not hold in this context.

\begin{example} 
\label{ex:abbassi}
As in the proof of Theorem \ref{thm:omega-otid}, let 
$u = a_0 \otimes a_1 - a_0a_1 \otimes 1$,
$v = b_0 \otimes b_1$, and
$w = c_0 \otimes c_1$. Then
\begin{align*}
u \cup_1 (v \cup w) 
		& = - (a_0b_0 \otimes b_1c_0 \otimes c_1a_1)
			    +	 (a_0a_1b_0 \otimes b_1c_0 \otimes c_1)\\[3 pt]
(u \cup_1 v) \cup w 
		& = (a_0b_0 \otimes a_1b_1 - a_0a_1b_0 \otimes b_1)
		      \cup (c_0 \otimes c_1)\\
		& =  a_0b_0 \otimes a_1b_1 c_0 \otimes c_1
					-a_0a_1b_0 \otimes b_1c_0 \otimes c_1     \\[3 pt]
v \cup (u \cup_1 w)
		& = (b_0 \otimes b_1)\cup 
					(a_0c_0 \otimes a_1c_1 - a_0a_1c_0 \otimes c_1)\\    
		& = b_0 \otimes b_1a_0c_0 \otimes a_1c_1
					- b_0 \otimes b_1 a_0a_1c_0 \otimes c_1, 	
\end{align*}
and so 
\[
u \cup_1 (v \cup w) \ne (u\cup_1 v) \cup w + v \cup (u \cup_1 w).
\]
\end{example}

\begin{remark}
\label{rem:G-algebra}
The referee has suggested there may be a unified approach towards 
showing that  our two main examples---cochain algebras and non-commutative differential 
forms---are indeed $\cup_1$-dgas. This may rely on the notion of 
G-algebra in the sense of Kadeishvili \cite{Kadeishvili-2003} and use 
recent work of Battikh and Issaoui \cite{Battikh-2019}.
\end{remark}

\section{Binomial cup-one algebras}
\label{sect:bin-cup1}

In this section we review the notions of binomial ring and ring of 
integrally-valued polynomials, and define binomial cup-one algebras.

\subsection{Binomial rings}
\label{subsec:binomial}
Following P.~Hall \cite{Hall-1976}, Wilkerson \cite{Wilkerson},  
Elliott \cite{Elliott-2006}, and D.~Yau \cite{Yau}, we make the 
following definition.

\begin{definition}
\label{def:binomial-ring}
A commutative, unital ring $A$ is a {\em binomial ring}\/ if $A$ is torsion-free 
(as a $\Z$-module), and the elements 
\begin{equation}
\label{eq:d-a}
\binom{a}{n} \coloneqq a(a-1)\cdots (a-n+1)/n! \in A\otimes_{\Z} \Q
\end{equation}
lie in $A$ for every $a\in A$ and every $n>0$.  
\end{definition}

For instance, the ring of integers $\Z$ is a binomial ring. 

As shown by Xantcha in \cite{Xantcha},  building on work of Ekedahl 
from \cite{Ekedahl-2002}, this condition is equivalent to the 
existence of maps $\zeta_n\colon A\to A$, $a\mapsto \binom{a}{n}$ 
for all integers $n\ge 0$ such that the following five axioms are satisfied:
\begin{align}
\z_n(a+b) &= \sum_{i+j=n} \z_i(a)\z_j(b), 
\label{bin1-z}\\
\z_n(ab)   &= \sum_{m=0}^{n}\z_m(a) \cdot  \left(
\sum_{\substack{q_1+\cdots+q_m=n \\ 
q_i\geq 1}} \z_{q_1}(b)\cdots\z_{q_m}(b)\right) ,
\label{bin2-z}\\
\z_{m}(a)\z_{n}(a) &=  \sum_{k=0}^{n}\binom{m+k}{n}
\binom{n}{k} \z_{m+k}(a),
\label{bin3-z}\\
\z_n(1) &=0\quad\text{for $n\ge 2$},
\label{bin4-z}\\
\z_0(a)&=1\quad\text{and}\quad  \z_1(a)=a.
\label{bin5-z}
\end{align}

\begin{remark}
\label{rem:lambda}
As shown in \cite{Elliott-2006, Wilkerson}, 
a torsion-free ring $A$ is a binomial ring 
if and only if $A$ is a $\lambda$-ring and  all its Adams 
operations are the identity. 
\end{remark}

The following lemma shows that if $A$ is a commutative ring,
possibly without unit, that satisfies the assumptions 
of Definition \ref{def:binomial-ring},
then there is a natural way to extend $A$ to a binomial
ring by adding a unit.

\begin{lemma}
\label{lem:extend-ring}
Let $A$ be a commutative ring, possibly without unit. 
Suppose $A$ is a module over a binomial ring $R$, 
and suppose $\binom{a}{n} \in A$ for every $a\in A$ and $n>0$. 
Then $R \oplus A$ has the natural structure of a binomial ring
with unit $1\in R$, where $R$ is viewed as a direct summand of
$R \oplus A$.
\end{lemma}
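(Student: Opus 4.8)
The plan is to construct the ring structure on $R \oplus A$ in the only way forced by the requirement that $R$ be a subring and that multiplication be $R$-bilinear, then verify the five Xantcha axioms \eqref{bin1-z}--\eqref{bin5-z} together with torsion-freeness. First I would set the multiplication on $R \oplus A$ by $(r,a)\cdot(s,b) = (rs,\, rb + sa + ab)$, where $rb$ and $sa$ use the given $R$-module structure on $A$ and $ab$ uses the (unital-free) ring multiplication on $A$. Associativity and commutativity of this product follow directly from the corresponding properties of $R$, of $A$, and of the compatibility axiom $r(ab)=(ra)b$ assumed for $R$-algebras in Section~\ref{subsec:ga} (here applied to the non-unital algebra $A$); the element $(1,0)$ is visibly a two-sided unit. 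The element $(0,a)$ acts as $a$ does, so this really is the standard unitalization, with $R$ a direct summand subring.

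Next I would define the binomial operations. On the summand $R$ we already have $\zeta_n^R$ since $R$ is binomial; on $A$ we have, by hypothesis, the maps $a\mapsto \binom{a}{n}\in A$ for $n>0$. The natural guess is to define $\zeta_n\colon R\oplus A \to R\oplus A$ by the addition formula it must satisfy: since $(r,a) = (r,0) + (0,a)$ and these should be ``independent'', set
\begin{equation}
\label{eq:zeta- unitalization}
\zeta_n(r,a) = \sum_{i+j=n} \bigl(\zeta_i^R(r),\,0\bigr)\cdot\bigl(0,\zeta_j^A(a)\bigr),
\end{equation}
with the convention $\zeta_0^A(a)=0$ for the purposes of the $A$-part (so that $(0,a)$ contributes $\binom{a}{j}\in A$ for $j\ge 1$ and nothing for $j=0$) and $\zeta_0^R(r)=1$. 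Concretely this reads $\zeta_n(r,a) = \bigl(\binom{r}{n},\ \sum_{j=1}^{n}\binom{r}{n-j}\binom{a}{j}\bigr)$, which lands in $R\oplus A$ because each $\binom{r}{n-j}\in R$ and each $\binom{a}{j}\in A$. One should check this agrees with the formula $\binom{(r,a)}{n} = (r,a)((r,a)-1)\cdots((r,a)-n+1)/n!$ computed in $(R\oplus A)\otimes_\Z\Q$; this is where Vandermonde's identity $\binom{r+x}{n}=\sum_{i+j=n}\binom{r}{i}\binom{x}{j}$ is invoked, noting that in the rationalization $(r,a)$ and $(0,a)$ differ by the ``scalar'' $r$ which commutes with everything. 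Torsion-freeness of $R\oplus A$ is immediate since both $R$ and $A$ are torsion-free as $\Z$-modules. Hence $R\oplus A$ satisfies Definition~\ref{def:binomial-ring} directly, and one could even stop here; but to make the structure explicit it is worth recording the $\zeta_n$ as above and noting the five axioms then hold automatically.

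If instead one wants to verify axioms \eqref{bin1-z}--\eqref{bin5-z} by hand: \eqref{bin5-z} and \eqref{bin4-z} are immediate from the formula for $\zeta_n$ and from $\zeta_n^R(1)=0$ together with the unit being $(1,0)$; \eqref{bin1-z} is a double application of Vandermonde on the $R$- and $A$-parts; \eqref{bin3-z} (the ``$\binom{m}{}\binom{}{}$'' relation) and \eqref{bin2-z} (the product formula) reduce to the same identities already known in $R$ and in $A$, spliced together through the convolution \eqref{eq:zeta- unitalization}, using that $A$ is an ideal of $R\oplus A$ with $A\cdot A\subseteq A$. The main obstacle is purely bookkeeping: checking that the convolution formula for $\zeta_n$ is genuinely multiplicative, i.e.\ that axiom \eqref{bin2-z} survives the unitalization, since the product formula mixes $R$-scalar multiplication and $A$-internal multiplication in a way that must be disentangled carefully; the cleanest route is to avoid it entirely by working in $(R\oplus A)\otimes_\Z\Q$ and verifying $\binom{(r,a)}{n}\in R\oplus A$ directly via Vandermonde, which is what I would actually write.
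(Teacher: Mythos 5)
Your proposal is correct and follows essentially the same route as the paper: the same unitalized multiplication $(r+a)(s+b)=rs+rb+sa+ab$, and the same verification that $\binom{r+a}{n}$ lands in $R\oplus A$ by computing in $(R\oplus A)\otimes_{\Z}\Q$ via the Vandermonde convolution $\zeta_n(r+a)=\sum_{i+j=n}\zeta_i(r)\zeta_j(a)$ (the paper derives this from the integral identity for $P_n(a+b)$ and then divides by $n!$). Your closing remark that one should bypass the axiom-by-axiom check and verify Definition~\ref{def:binomial-ring} directly is exactly what the paper does.
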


\begin{proof}
For $a\in A$ and $n >0$, set $P_n(a)$ equal to
$P_n(a) = a(a-1)\cdots (a-n+1)$. Due to our assumption, 
equation \eqref{bin1-z} is equivalent to the
statement that for $n >0$, the equation 
\begin{equation}
\label{eq:Pn}
P_n(a+b) = P_n(a) 
		+ \sum_{i=1}^{n-1}\binom{n}{i}P_i(a)P_{n-i}(b)
		+ P_n(b)
\end{equation}
holds for all $a,b\in A$.

Define the multiplication on $R\oplus A$ by
setting  $(r + a)(s+b) = rs + rb + as + ab$
where $rs$ is the mutiplication in $R$, the products
$rb$ and $as$ come from the structure of $A$ as
an $R$ module, and $ab$ denotes the product in $A$. Since 
$as=sa$ and $rb=br$ for all $r,s\in R$ and $a,b\in A$, 
this defines a commutative ring structure on the 
$R$-module $R\oplus A$, with unit as advertised. 

Dividing now both sides in equation \eqref{eq:Pn}
by $n!$ and using the identity 
$\binom{n}{i} = \frac{n!}{i!(n-i)!}$, it follows that
in $(R \oplus A)\otimes_{\Z} \Q$ we have
\begin{align*}
\z_n(r  + a) = \frac{P_n(r+a)}{n!}
			& = \frac{P_n(r)}{n!} + \sum_{i+1}^{n-1} \binom{n}{i}
			      \frac{P_i(r)P_{n-i}(a)}{n!} + \frac{P_n(a)}{n!}\\
			& =  \frac{P_n(r)}{n!}  + \sum_{i+1}^{n-1} 
			      \frac{P_i(r)P_{n-i}(a)}{i!(n-i)!} +\frac{P_n(a)}{n!}\\    
			& = \z_n(r) + \sum_{i=1}^{n-1}\z_i(r)\z_{n-i}(a) 
						+ \z_n(a),    
\end{align*}
and so $\z_n(r  + a) \in A$, for every $r\in R$, $a\in A$, and $n>0$. 
Hence, $R \oplus A$ is a binomial ring.
\end{proof}

\subsection{$R$-valued polynomials}
\label{subsec:closure}
Suppose $R$ is an integral domain, and let $K=\Frac(R)$ be its 
field of fractions. Let $K[X]$ be the ring of polynomials in a 
set of formal variables $X$, with coefficients in $K$, and let $R^X$ 
be the free $R$-module on the set $X$.   
Following  \cite{CC97, Elliott-2006, Elliott-2007}, we define 
the {\em ring of $R$-valued polynomials}\/ (in the 
variables $X$ and with coefficients in $K$) as the subring 
\begin{equation}
\label{eq:bin-int}
\Int(R^X)\coloneqq \{q\in K[X] \mid q(R^X)\subseteq R\}.
\end{equation}
Assume now that the domain $R$ has characteristic $0$, that is, 
$R$ is torsion-free as an abelian group. Then $\Int(R^X)$ is a 
binomial ring, which satisfies a type of universality property that makes it into 
the {\em free binomial ring}\/ on variables in $X$.  As a consequence (at least 
when $R=\Z$), any binomial ring is a quotient of $\Int(R^X)$, for some set $X$.

As shown in \cite[Theorem 7.1]{Elliott-2006}, every torsion-free ring 
$A$ is contained in a smallest binomial ring, $\Bin(A)$, 
which is defined as the intersection of all binomial 
subrings of $A\otimes_{\Z} \Q$ containing $A$. Alternatively, 
\begin{equation}
\label{eq:bin-bin}
\Bin(A) = \Int(\Z^{X_A}) /\big( I_A \Q^{X_A} \cap \Int(\Z^{X_A})\big) ,
\end{equation}
where $\Z[X_A]$ denotes the polynomial ring in variables 
indexed by the elements of $A$, and $I_A$ is the kernel of 
the canonical epimorphism $\Z[X_A]\to A$. Moreover, if $A$ is
generated as a $\Z$-algebra by a collection of elements $\{a_i\}_{i\in J}$, 
then $\Bin(A)$ is the $\Z$-subalgebra of $A\otimes_{\Z} \Q$ 
generated by all elements of the form $\zeta_n(a_i)=\binom{a_i}{n}$ with $i\in J$ 
and $n\ge 0$.  Finally, it is readily seen that $\Bin(\Z[X])=\Int(\Z^X)$. 

\subsection{Products of binomial polynomials}
\label{subsec:prod}

Let $I\colon X\to \Z_{\ge 0}$ be a function which takes only finitely 
many non-zero values; in other words, the support of the function, 
$\supp(I) \coloneqq \{x \in X \mid I(x)\ne 0 \}$, is a finite subset of $X$. 
Given a coefficient ring $R$, we associate to such a function an 
$R$-valued polynomial function, as follows.

First let $\bx=\{x_1, \ldots, x_n\}$ be a finite subset of $X$ that contains 
$\supp(I)$. Then  the indexing function $I\colon X\to \Z_{\ge 0}$ takes 
values $I(x_k)=i_k$ for $k=1,\dots ,n$, and $0$ otherwise, and so it may 
be identified with the $n$-tuple $(i_1,\dots, i_n)\in (\Z_{\ge 0})^n$. 
We define a polynomial, $\zeta_I(\bx)$, in the variables $x_1,\dots, x_n$, by
\begin{equation}
\label{eq:zeta-x}
\zeta_I(\bx)=\prod_{k=1}^{n} \zeta_{i_k}(x_k).
\end{equation}
Clearly, $\zeta_I(\bx)$ is an $R$-valued polynomial in $\Int(R^{\bx})\subset \Int(R^X)$. 
That is to say, given an $n$-tuple $\ba=(a_1,\dots, a_n)\in R^{\bx}$, the evaluation 
$\zeta_I(\ba)\coloneqq \zeta_I(\bx)(\ba)$ of the polynomial $\zeta_I(\bx)$ at 
$x_k=a_k$ is an element of $R$. 

We now define an $R$-valued polynomial $\zeta_I\in \Int(R^X)$ by setting 
$\zeta_I=\zeta_I(\bx)$, for some finite set of variables $\bx$ with $\bx\supseteq \supp(I)$. 
Since $\zeta_0(a)=1$ for all $a\in R$, this definition is independent of the choice of $\bx$. 
Given any $\ba\in R^X$, we have a well-defined evaluation $\zeta_I(\ba)\in R$; 
in fact, we do have such an evaluation for any function $\ba\colon X\to R$, 
again since $I$ has finite support. 
For instance, if $I=\mathbf{0}$ is the function that takes only the 
value $0$, then $\zeta_{\mathbf{0}}$ is the constant polynomial 
$1$ in the variables $X$, and $\zeta_{\mathbf{0}}(\ba)=1$, 
for any $\ba\colon X\to R$.

The above notions extend to an arbitrary binomial ring $A$. 
For instance, if $I=(i_1,\dots,i_n)$ is an $n$-tuple of non-negative integers, 
the evaluation of the polynomial function $\zeta_I(\bx)$ from \eqref{eq:zeta-x} 
at an $n$-tuple $\ba=(a_1,\dots, a_n)$ of elements in $A$ is equal to 
$\zeta_I(\ba)= \prod_{k=1}^{n} \zeta_{i_k}(a_k)$. More generally, 
the evaluation $\zeta_I(\ba)\in A$ is defined for any function 
$\ba\colon X\to A$.

\subsection{A basis for integer-valued polynomials}
\label{subsec:basis}
We restrict now to the case when $R=\Z$. 
The next theorem provides a very useful $\Z$-basis for the ring $\Int(\Z^{X})$ 
of integer-valued polynomials; for a proof, we refer to \cite[Proposition XI.I.12]{CC97} 
and \cite[Lemma 2.2]{Elliott-2006}. 

\begin{theorem}
\label{thm:basis}
The $\Z$-module $\Int(\Z^{X})$ is free, with basis consisting of all 
polynomials of the form $\zeta_I$ with $I\colon X\to \Z_{\ge 0}$ 
a function with finite support.
\end{theorem}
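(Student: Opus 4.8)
The plan is to establish two things: that the proposed set $\{\zeta_I : I \colon X \to \Z_{\ge 0} \text{ finitely supported}\}$ spans $\Int(\Z^X)$ over $\Z$, and that it is $\Z$-linearly independent. For linear independence, I would observe that the $\zeta_I$ are already linearly independent over $\Q$ inside $\Q[X]$: each $\zeta_I(\bx) = \prod_k \binom{x_k}{i_k}$ has a distinct leading monomial $\prod_k x_k^{i_k}/\prod_k i_k!$ (with respect to any monomial order refining total degree), so a nontrivial $\Q$-linear relation would force the top monomials to cancel, which is impossible since distinct $I$ give distinct monomials. Hence no nontrivial $\Z$-linear relation holds either.

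The spanning statement is where the real content lies. First I would reduce to the finitely-many-variables case: any $q \in \Int(\Z^X)$ lies in $K[x_1,\dots,x_n]$ for some finite $\bx = \{x_1,\dots,x_n\} \subseteq X$, and it is integer-valued on $\Z^{\bx}$, so it suffices to show $\Int(\Z^{\bx})$ is spanned over $\Z$ by the finitely-supported products $\zeta_I(\bx)$ with $\supp(I) \subseteq \bx$. For this, I would proceed by induction on $n$. The base case $n=1$ is the classical theorem of Pólya: the binomials $\binom{x}{k}$, $k \ge 0$, form a $\Z$-basis of $\Int(\Z)$, obtained by the finite-difference algorithm — given $q \in \Int(\Z)$ of degree $d$, the leading coefficient times $d!$ is an integer (it equals the $d$-th finite difference), so $q - c\binom{x}{d} \in \Int(\Z)$ has lower degree, and one iterates. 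For the inductive step, write $q \in \Int(\Z^{\bx})$ as a polynomial in $x_n$ with coefficients in $K[x_1,\dots,x_{n-1}]$, and expand in the basis $\binom{x_n}{k}$ of $\Q[x_n]$ over $\Q[x_1,\dots,x_{n-1}]$: $q = \sum_{k} q_k(x_1,\dots,x_{n-1})\binom{x_n}{k}$. The key point is that each coefficient $q_k$ is itself integer-valued on $\Z^{n-1}$: evaluating $q$ at integer points $x_n = 0, 1, 2, \dots$ and inverting the (unitriangular, integer) change-of-basis matrix between $\{\binom{x_n}{k}\}$ and $\{1, x_n, x_n^2,\dots\}$ recovers the $q_k$ as integer combinations of values of $q$, hence $q_k(\Z^{n-1}) \subseteq \Z$. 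By the inductive hypothesis each $q_k$ is a $\Z$-combination of the $\zeta_J$ in the variables $x_1,\dots,x_{n-1}$, and multiplying by $\binom{x_n}{k} = \zeta_k(x_n)$ and summing expresses $q$ as a $\Z$-combination of the $\zeta_I(\bx)$.

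The main obstacle, and the step deserving the most care, is the claim that the coefficients $q_k$ in the $\binom{x_n}{k}$-expansion inherit integrality from $q$ — i.e. that integrality on the product lattice $\Z^n$ descends to integrality of each slice-coefficient on $\Z^{n-1}$. This is exactly where one uses that the transition matrix between the monomial basis $\{x_n^j\}$ and the binomial basis $\{\binom{x_n}{k}\}$ has integer entries with $\pm 1$ on the diagonal, so that the finite-difference operator $\Delta_{x_n}$ extracts the $q_k$ via $q_k(x_1,\dots,x_{n-1}) = (\Delta_{x_n}^k q)|_{x_n = 0}$, a $\Z$-linear combination of the integer-valued functions $q(\cdot, 0), q(\cdot, 1), \dots, q(\cdot, k)$ on $\Z^{n-1}$. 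Once this is in hand the induction runs smoothly, and the result follows; since the statement cites \cite[Proposition XI.I.12]{CC97} and \cite[Lemma 2.2]{Elliott-2006}, I would note that one may alternatively simply invoke those references for the finitely-many-variables case and pass to the colimit over finite subsets $\bx \subseteq X$, which is automatic because every polynomial involves only finitely many variables and the $\zeta_I$ with $\supp(I) \subseteq \bx$ are compatible under the inclusions $\Int(\Z^{\bx}) \hookrightarrow \Int(\Z^{\bx'})$ for $\bx \subseteq \bx'$.
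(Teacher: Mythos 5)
Your proof is correct. Note, however, that the paper does not actually prove Theorem \ref{thm:basis}: it explicitly defers to \cite[Proposition XI.I.12]{CC97} and \cite[Lemma 2.2]{Elliott-2006}, so there is no in-paper argument to compare against step by step. What you have written is a self-contained version of the standard argument underlying those references. Your two main ingredients are sound: linear independence follows from the leading-monomial observation (each $\zeta_I$ has leading term $x^I/\prod_k i_k!$, and distinct $I$ give distinct monomials, so a maximal exponent among the nonzero coefficients cannot cancel), and spanning follows by reduction to finitely many variables, induction on $n$ with P\'olya's theorem as the base case, and the key finite-difference identity $q_k(x_1,\dots,x_{n-1}) = \sum_{j=0}^{k}(-1)^{k-j}\binom{k}{j}\, q(x_1,\dots,x_{n-1},j)$, which shows that the coefficients in the $\binom{x_n}{k}$-expansion inherit integer-valuedness. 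You correctly identify this last point as the crux; it is exactly where the multivariate case reduces to the one-variable case. The passage to an arbitrary (possibly infinite) set $X$ via the colimit over finite subsets is also handled correctly, since every polynomial involves only finitely many variables and the bases are compatible under the inclusions $\Int(\Z^{\bx})\hookrightarrow\Int(\Z^{\bx'})$. In short: the paper buys brevity by citing the literature; your argument buys self-containedness at the cost of about a page, and the two are mathematically the same result established by essentially the argument the cited sources use.
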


Alternatively, one may take as a basis for $\Int(\Z^{X})$ all polynomials 
$\zeta_I(\bx)$ with $\supp(I)=\bx$, together with the constant polynomial 
$\zeta_{\bz}$. We emphasize that in the products $\zeta_{I}(\bx)$, 
there is no repetition allowed among the variables comprising the set $\bx$.  
For instance, the product $\zeta_m(x)\zeta_n(x)$ is not part of the 
aforementioned $\Z$-basis; rather, formula \eqref{bin3-z} expresses 
it as a linear combination of the binomials $\zeta_m(x),\dots,$ $\zeta_{m+n}(x)$. 
On the other hand, if $I$ and $J$ have disjoint supports, we have that 
$\zeta_{I}\cdot \zeta_{J}=\zeta_{I+J}$, and this polynomial is again 
part of the aforementioned basis for $\Int(\Z^{X})$.

\begin{example}[G.~P\'olya]
\label{ex:int-z}
Every degree $n$ integer-valued polynomial $f$ in a single variable $x$ 
can be written uniquely as a linear combination, $f(x)=\sum_{k=0}^n c_k \binom{x}{k}$, 
where the coefficients $c_k\in \Z$ are defined recursively by $c_0=f(0)$ and 
$c_k=f(k)-\sum_{i=0}^{k-1} c_i\binom{k}{i}$. 
\end{example}

As an application of the above theorem, we obtain the following universality 
property for free binomial rings.

\begin{corollary}
\label{lem:extend-map}
Let $X$ be a set, let $A$ be a binomial ring, and let 
$\phi \colon X \to A$ be a map of sets. There is then a unique 
extension of $\phi$ to a map $\tilde\phi \colon \Int(\Z^X) \to A$ 
of binomial rings.
\end{corollary}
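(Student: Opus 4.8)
The plan is to use Theorem~\ref{thm:basis}, which gives an explicit $\Z$-basis $\{\zeta_I\}$ for $\Int(\Z^X)$, to force the extension and then verify it is a ring map by reducing everything to identities that already hold in $A$ because $A$ is binomial. First I would define $\tilde\phi$ on the basis: for a finite-support function $I\colon X\to\Z_{\ge 0}$, set $\tilde\phi(\zeta_I) = \zeta_I(\phi)$, the evaluation of the polynomial $\zeta_I$ at the point of $A^X$ determined by $\phi$, as described in Section~\ref{subsec:prod}. Concretely, if $\supp(I)=\{x_1,\dots,x_n\}$ with $I(x_k)=i_k$, then $\tilde\phi(\zeta_I) = \prod_{k=1}^n \zeta_{i_k}(\phi(x_k))$, using the binomial operations $\zeta_{i_k}$ on $A$. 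Since $\{\zeta_I\}$ is a $\Z$-basis, this extends uniquely to a $\Z$-linear map $\tilde\phi\colon \Int(\Z^X)\to A$, and it visibly restricts to $\phi$ on $X$ (the case $I=\delta_x$, $n=1$, $i_1=1$, using $\zeta_1(\phi(x))=\phi(x)$ by \eqref{bin5-z}) and sends $\zeta_{\bz}$ to $1$ by \eqref{bin5-z} again.

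Next I would check that $\tilde\phi$ is a ring homomorphism. It suffices to verify multiplicativity on basis elements, $\tilde\phi(\zeta_I\cdot\zeta_J) = \tilde\phi(\zeta_I)\,\tilde\phi(\zeta_J)$, by $\Z$-bilinearity. Write $\zeta_I = \prod_{x\in\supp(I)}\zeta_{I(x)}(x)$ and similarly for $J$. The product $\zeta_I\cdot\zeta_J$ factors over $X$: for $x\notin\supp(I)\cap\supp(J)$ only one factor appears, while for $x$ in the common support one gets a product $\zeta_{I(x)}(x)\,\zeta_{J(x)}(x)$ of two binomials in the same variable. By formula \eqref{bin3-z} applied in the (free binomial) ring $\Int(\Z^{\{x\}})$, this equals an explicit $\Z$-linear combination $\sum_k \binom{I(x)+k}{J(x)}\binom{J(x)}{k}\zeta_{I(x)+k}(x)$ of single-variable binomials. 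Expanding all such common-support factors this way rewrites $\zeta_I\zeta_J$ as a $\Z$-linear combination of basis elements $\zeta_K$, each a product with no repeated variables; applying $\tilde\phi$ and using its $\Z$-linearity gives the same $\Z$-linear combination of $\zeta_K(\phi)$. On the other hand, $\tilde\phi(\zeta_I)\,\tilde\phi(\zeta_J) = \prod_x \zeta_{I(x)}(\phi(x)) \cdot \prod_x \zeta_{J(x)}(\phi(x))$, and regrouping over $x$ and applying \eqref{bin3-z} now \emph{in $A$} (legitimate since $A$ is binomial) yields exactly the same combination of the $\zeta_K(\phi)$. So the two sides agree. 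Additivity is immediate since $\tilde\phi$ is $\Z$-linear.

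Uniqueness is the easy part: any binomial-ring map extending $\phi$ must commute with the $\zeta_n$, hence is forced on each $\zeta_{i}(x)$, hence on each product $\zeta_I$, and these span $\Int(\Z^X)$ over $\Z$; being additive, it is then determined everywhere. The main obstacle is the multiplicativity check: one must be careful that the combinatorial rewriting of $\zeta_I\zeta_J$ into the basis, driven by \eqref{bin3-z}, is performed \emph{identically} on the polynomial side and on the $A$ side — i.e., that the structure constants coming from \eqref{bin3-z} are universal integers independent of the ring. This is precisely the content of the five Ekedahl--Xantcha axioms \eqref{bin1-z}--\eqref{bin5-z} holding in every binomial ring, so the argument goes through; but it is the place where the proof has genuine content rather than bookkeeping, and I would organize the computation so that the appeal to \eqref{bin3-z} is made exactly once on each side.
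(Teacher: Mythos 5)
Your proposal is correct and follows essentially the same route as the paper: define $\tilde\phi$ on the $\Z$-basis $\{\zeta_I\}$ from Theorem \ref{thm:basis} by evaluating via the $\zeta$-maps of $A$, extend $\Z$-linearly, and verify multiplicativity by appealing to \eqref{bin3-z} with its universal integer structure constants on both sides. Your write-up is somewhat more explicit than the paper's (which leaves the \eqref{bin3-z} bookkeeping and the uniqueness argument largely to the reader), but there is no substantive difference in method.
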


\begin{proof}
For each finitely supported function $I\colon X\to \Z_{\ge 0}$ and 
each finite subset $\bx=\{x_1,\dots,x_n\}$ of $X$, we put 
\begin{equation}
\label{eq:tilde}
\tilde\phi \big(\zeta_I(\bx)\big) = \zeta_I(\phi(x_1),\dots, \phi(x_n)),
\end{equation}
where on the right side we are using the $\zeta$-maps of $A$. 
In view of Theorem \ref{thm:basis}, this formula defines a $\Z$-linear map, 
$\tilde\phi \colon \Int(\Z^X) \to A$.  Since $\zeta_1(x)=x$ for all $x\in X$, 
the map $\tilde\phi$ agrees with $\phi$ on $X$. 
Using equation \eqref{bin3-z}, it is readily verified that the map $\tilde\phi$ 
respects the multiplicative structures on source and target. Moreover, 
by construction, $\tilde\phi$ preserves the binomial structures. Therefore, 
$\tilde\phi$ is a morphism of binomial rings.
\end{proof}

A characterization of binomial rings in terms of integer-valued polynomials 
is given in the following theorem (see \cite[Theorem 4.1]{Elliott-2006} 
and \cite[Theorem 5.34]{Yau}).

\begin{theorem}
\label{thm:Yau}
A ring $R$ is a binomial ring if and only if the following two
conditions hold:
\begin{enumerate}
\item $R$ is $\Z$-torsion-free,
\item $R$ is the homomorphic image of a ring $\Int(\Z^X)$
of integer-valued polynomials for some set $X$.
\end{enumerate}
\end{theorem}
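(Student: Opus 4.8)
The plan is to prove the two implications separately, relying on Corollary \ref{lem:extend-map} for one direction and on an $n!$-cancellation argument for the other.

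For the forward implication, suppose $R$ is a binomial ring. Condition (1) is then immediate from Definition \ref{def:binomial-ring}. For condition (2) I would take $X$ to be the underlying set of $R$ and apply Corollary \ref{lem:extend-map} to the identity map $\phi = \id_R \colon X \to R$, obtaining a morphism of binomial rings $\tilde\phi \colon \Int(\Z^R) \to R$. Since $\tilde\phi(\zeta_1(a)) = \zeta_1(\phi(a)) = a$ for every $a \in R$ (the variable indexed by $a$ maps to $a$), the map $\tilde\phi$ is surjective, exhibiting $R$ as a homomorphic image of $\Int(\Z^R)$.

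For the converse, suppose (1) and (2) hold, and fix a surjective ring homomorphism $\pi \colon \Int(\Z^X) \surj R$; since a homomorphic image of a commutative unital ring is again commutative and unital, it remains only to check that $\binom{a}{n} \in R$ for all $a \in R$ and $n > 0$. The key point is the identity $n!\,\zeta_n(b) = b(b-1)\cdots(b-n+1)$, valid for every element $b$ of any binomial ring, in particular in $\Int(\Z^X)$. Given $a \in R$, I would choose a lift $b \in \Int(\Z^X)$ with $\pi(b) = a$, apply $\pi$ to this identity, and use that the right-hand side is a polynomial with integer coefficients to get $n!\,\pi(\zeta_n(b)) = a(a-1)\cdots(a-n+1)$ in $R$. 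On the other hand, inside $R \otimes_\Z \Q$ (into which $R$ embeds by (1)) one has $n!\,\binom{a}{n} = a(a-1)\cdots(a-n+1)$ as well. Comparing the two expressions in the torsion-free module $R \otimes_\Z \Q$ and cancelling $n!$ yields $\binom{a}{n} = \pi(\zeta_n(b)) \in R$, as desired.

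The only subtlety worth flagging is this final cancellation: a quotient of a binomial ring need not be binomial in general, since the $\zeta_n$ need not descend, so the torsion-freeness hypothesis (1) is genuinely doing work — it is precisely what allows the passage to $R \otimes_\Z \Q$, the cancellation of $n!$, and the identification of $\pi(\zeta_n(b))$ with $\binom{a}{n}$ independently of the chosen lift $b$. Beyond that, the argument is routine, and I do not anticipate a real obstacle.
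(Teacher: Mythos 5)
Your proof is correct. There is, however, no in-paper argument to compare it against: the paper states this theorem as a quoted result, citing \cite[Theorem 4.1]{Elliott-2006} and \cite[Theorem 5.34]{Yau}, and gives no proof of its own. Your argument is a clean, self-contained derivation from material the paper does establish. The forward direction is exactly the universality of the free binomial ring packaged in Corollary \ref{lem:extend-map}, applied to $X=R$ and $\phi=\id_R$; surjectivity follows since $\zeta_1(x_a)\mapsto a$. The converse correctly isolates the two ingredients: the relation $n!\,\zeta_n(b)=b(b-1)\cdots(b-n+1)$ is an honest ring identity in $\Int(\Z^X)$ (not merely in $\Int(\Z^X)\otimes_{\Z}\Q$), hence is preserved by an arbitrary ring homomorphism $\pi$, while torsion-freeness of $R$ is what permits the passage to $R\otimes_{\Z}\Q$ and the cancellation of $n!$, giving $\binom{a}{n}=\pi(\zeta_n(b))\in R$ independently of the lift $b$. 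Your closing observation that quotients of binomial rings need not be binomial, so that hypothesis (1) is doing genuine work, is precisely the right subtlety to flag. The only pedantic point worth adding is that ``homomorphic image'' must be read as a surjective \emph{unital} ring map, so that $\pi(b-k)=a-k$ for the integer translates; with that convention the argument goes through verbatim.
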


\begin{corollary}
\label{cor:bproduct}
Let $R_1$ and $R_2$ be binomial rings. Then the tensor product 
$R_1 \otimes_{\Z} R_2$, with product 
$(a \otimes b)\cdot (c \otimes d) = ac \otimes bd$,  
is a binomial ring.
\end{corollary}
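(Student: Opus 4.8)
The plan is to verify the two conditions in Theorem \ref{thm:Yau} for the ring $R\coloneqq R_1\otimes_{\Z} R_2$, whose commutative ring structure is the standard one on a tensor product of commutative rings, with unit $1\otimes 1$. So I would argue in two steps: torsion-freeness, and realizing $R$ as a homomorphic image of an integer-valued polynomial ring.

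First I would check that $R$ is $\Z$-torsion-free. A naive reduction to finitely generated subgroups is not available here (the map from a tensor product of finitely generated subgroups into $R$ need not be injective), so instead I would use that a torsion-free module over the PID $\Z$ is flat. Since $R_2$ is torsion-free, the canonical map $R_2\to R_2\otimes_{\Z}\Q$ is injective; tensoring with the flat $\Z$-module $R_1$ preserves this injectivity, so $R=R_1\otimes_{\Z} R_2$ embeds into $R_1\otimes_{\Z}(R_2\otimes_{\Z}\Q)=R\otimes_{\Z}\Q$. The target is a $\Q$-vector space, hence torsion-free, and therefore so is $R$.

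Next I would show that $R$ is a homomorphic image of a ring of integer-valued polynomials. By Theorem \ref{thm:Yau} there are surjective ring homomorphisms $\pi_i\colon \Int(\Z^{X_i})\surj R_i$ for suitable sets $X_i$. The key point is the identification $\Int(\Z^{X_1})\otimes_{\Z}\Int(\Z^{X_2})\cong \Int(\Z^{X_1\sqcup X_2})$, realized by the multiplication map $p\otimes q\mapsto pq$, where $\Int(\Z^{X_i})$ is viewed as a subring of $\Int(\Z^{X_1\sqcup X_2})$ via the inclusion $X_i\hookrightarrow X_1\sqcup X_2$; this map is $\Z$-bilinear and respects products, hence is a ring homomorphism. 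To see that it is bijective I would invoke the $\Z$-basis of Theorem \ref{thm:basis}: a finitely supported function $I\colon X_1\sqcup X_2\to \Z_{\ge 0}$ is exactly a pair $(I_1,I_2)$ of finitely supported functions on $X_1$ and on $X_2$, and since $I_1$ and $I_2$ have disjoint supports one has $\zeta_{I_1}\cdot\zeta_{I_2}=\zeta_I$ with no relations of the type \eqref{bin3-z} intervening. Thus the multiplication map carries the $\Z$-basis $\{\zeta_{I_1}\otimes\zeta_{I_2}\}$ of the tensor product bijectively onto the $\Z$-basis $\{\zeta_I\}$ of $\Int(\Z^{X_1\sqcup X_2})$, so it is a ring isomorphism. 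Composing its inverse with the surjective ring homomorphism $\pi_1\otimes\pi_2\colon \Int(\Z^{X_1})\otimes_{\Z}\Int(\Z^{X_2})\surj R_1\otimes_{\Z}R_2=R$ (a surjection since $\pi_1,\pi_2$ are) then exhibits $R$ as a quotient of $\Int(\Z^{X_1\sqcup X_2})$. Having verified both conditions, Theorem \ref{thm:Yau} gives that $R$ is a binomial ring.

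I expect the main obstacle to be the isomorphism $\Int(\Z^{X_1})\otimes_{\Z}\Int(\Z^{X_2})\cong\Int(\Z^{X_1\sqcup X_2})$: one must be sure that passing to the tensor product loses no integer-valued polynomials, which is precisely what the explicit monomial basis of Theorem \ref{thm:basis} guarantees. A secondary subtlety is the torsion-freeness, which genuinely needs the flatness of torsion-free abelian groups rather than a finitely-generated-subgroup argument.
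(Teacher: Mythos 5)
Your proposal is correct and follows essentially the same route as the paper's proof: verify the two conditions of Theorem \ref{thm:Yau}, using the identification $\Int(\Z^{X_1})\otimes_{\Z}\Int(\Z^{X_2})\cong\Int(\Z^{X_1\sqcup X_2})$ via the basis of Theorem \ref{thm:basis}. You merely supply more detail than the paper does on the two points it leaves implicit, namely the flatness argument for torsion-freeness and the explicit matching of monomial bases.
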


\begin{proof}
By Theorem \ref{thm:Yau}, it suffices to show that
$R_1 \otimes_{\Z} R_2$ is $\Z$-torsion-free and the homomorphic
image of $\Int(\Z^X)$ for some set $X$.
The abelian group $R_1 \otimes_{\Z} R_2$ is torsion-free, since both 
$R_1$ and $R_2$ are torsion-free as $\Z$-modules. 
If we have ring epimorphisms $\Int(X_i)\surj R_i$ for $i=1,2$, 
then $R_1 \otimes_{\Z} R_2$ is a 
homomorphic image of $\Int(X_1) \otimes_{\Z} \Int(X_2)$. 
The result now follows, since
$\Int(X_1) \otimes_{\Z} \Int(X_2) \cong \Int(X_1 \cup X_2) $
by Theorem \ref{thm:basis}.
\end{proof}

One can also prove Corollay \ref{cor:bproduct} by considering
$R_1 \otimes_{\Z} R_2$ as a subring of $R_1 \otimes_{\Z} R_2\otimes_{\Z} \Q$.
Then the $\z_n$ maps have well-defined restrictions
to $R_1 \otimes_{\Z} R_2$.

\subsection{Binomial cup-one algebras}
\label{subsec:binom-cup1}
We now combine the notions of graded algebras with cup-one products
(\S\ref{subsec:graded-cup1}) and cup-one algebras 
(\S\ref{subsec:cupd-formula}) on one hand, 
with the notion of binomial rings (\S\ref{subsec:binomial}) 
on the other hand. 

\begin{definition}
\label{def:binomial-with-cup-one}
Let $R$ be a binomial ring. 
A graded $R$-algebra $A$ is called a 
\emph{binomial graded algebra with cup-one products}\/ 
if the following conditions are satisfied.
\begin{enumerate}[label=(\roman*)]
\item \label{cup1a-1}
$A$ is a graded algebra with cup-one products.
\item \label{cup1a-2}
The $R$-submodule $R \oplus A^1 \subset A^{\le 1}$, with multiplication 
$A^1\otimes_{R} A^1\to A^1$ given by the cup-one product, 
is a binomial ring.
\end{enumerate}
\end{definition}

The binomial ring structure from part \ref{cup1a-2} is the one 
described in Lemma \ref{lem:extend-ring}.
In concrete terms, the ring structure on $R\oplus A^1$ 
is given by $(r+a)(s+b)=rs +(rb+sa+a\cup_1 b)$, 
for $r,s\in R$ and $a,b\in A^1$. Denoting the binomial 
operations in $R$ by $\zeta_n^R$ and setting 
\begin{equation}
\label{eq:zeta-A}
\zeta_n^A(a)\coloneqq
a\cup_1 (a-1)\cup_1 \cdots \cup_1 (a-n+1)/n!
\end{equation}
for $a\in A^1$ and $n\in \N$, with the convention that $\zeta^A_0(a)=1\in R$, 
the binomial operations in  $R \oplus A^1$ 
are given by $\zeta_n(r+a)=\sum_{i+j=n} \z^R_i(r)\z^A_j(a)$.

\begin{definition}
\label{def:binomial-cup-one-algebra}
A differential graded $R$-algebra $(A,d)$ is called a 
\emph{binomial $\cup_1$-dga}\/  if it is both a $\cup_1$-dga 
and a binomial graded algebra with cup-one products.
\end{definition}

If $\varphi\colon A\to B$ is a morphism of 
binomial $\cup_1$-dgas, it follows from the definitions 
that $\varphi(\zeta_n(a)) = \zeta_n(\varphi(a))$, 
for all $n\ge 1$ and all $a\in  A^1$.

\begin{lemma}
\label{lem:3-properties}
In a binomial $\cup_1$-dga $(A,d)$, the following identities hold 
for $a,b,c\in A^1$,
\begin{align}
\label{eq:hirsch-left}
(a \cup b) \cup_1 c &= a \cup (b\cup_1 c) + (a \cup_1 c)\cup b,
\\
\label{eq:halfcup1d}
d(a \cup_1 b) &= -a \cup b - b \cup a + (da)\cup_1 b,
\\
\label{eq:zk1}
\z_{n+1}(a) &= \frac{\z_n(a)\cup_1 a - n \z_n(a)}{n+1}
\quad\text{for $n\ge 1$},
\end{align}
where in \eqref{eq:halfcup1d} we are assuming $da\in D^2(A)$ and $db=0$.
\end{lemma}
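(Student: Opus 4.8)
The plan is to dispatch the three identities in turn, in each case reducing to structure that is already in hand. Identity \eqref{eq:hirsch-left} requires no work at all: since $(A,d)$ is a binomial $\cup_1$-dga it is a fortiori a $\cup_1$-dga, hence a graded algebra with cup-one products, and \eqref{eq:hirsch-left} is verbatim the left Hirsch identity \eqref{eq:hirsch-1c} demanded by axiom \ref{axi} of Definition \ref{def:gr-cup1}.

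For \eqref{eq:halfcup1d} I would specialize the $\cupd$ formula \eqref{eq:c1d}, which holds in any $\cup_1$-dga. The hypotheses there are that $da,db\in D^2(A)$; we are given $da\in D^2(A)$, and $db=0$ lies in $D^2(A)$ trivially, so \eqref{eq:c1d} applies and reads
\[
d(a\cup_1 b) = -a\cup b - b\cup a + da\cup_1 b + db\cup_1 a - da\circ db.
\]
The maps $\cup_1\colon A^2\otimes_R A^1\to A^2$ and $\circ\colon D^2(A)\otimes_R D^2(A)\to D^2(A)$ are $R$-bilinear, so $db\cup_1 a=0$ and $da\circ db=0$ once $db=0$, and \eqref{eq:halfcup1d} drops out.

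The recursion \eqref{eq:zk1} is the one place calling for a little care with the bookkeeping, so I expect it to be the main (mild) obstacle. The idea is to read it off inside the binomial ring $\widehat A\coloneqq R\oplus A^1$ provided by axiom \ref{cup1a-2} of Definition \ref{def:binomial-with-cup-one}, whose multiplication is $(r+x)(s+y)=rs+(ry+sx+x\cup_1 y)$ and whose binomial operations, evaluated at $a\in A^1\subset\widehat A$, are exactly the $\z_n$ of \eqref{eq:zeta-A}; one first observes that $\z_n(a)\in A^1$ for $n\ge 1$, since the product $a(a-1)\cdots(a-n+1)$ in $\widehat A$ has vanishing $R$-component. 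Now invoke the elementary binomial-coefficient identity $\binom{x}{n+1}=\tfrac{1}{n+1}(x-n)\binom{x}{n}$, valid in $\widehat A\otimes_\Z\Q$ with both sides lying in $\widehat A$; at $x=a$ this says $(n+1)\,\z_{n+1}(a)=(a-n)\cdot\z_n(a)$, the product taken in $\widehat A$. Expanding via the multiplication rule --- writing $a-n=(-n)+a$ and $\z_n(a)=0+\z_n(a)$ --- gives $(a-n)\cdot\z_n(a)=-n\,\z_n(a)+a\cup_1\z_n(a)=\z_n(a)\cup_1 a-n\,\z_n(a)$, using that $\cup_1$ is commutative on $A^1$. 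Since $A^1$ is $\Z$-torsion-free (it is a subgroup of the binomial ring $\widehat A$), we may divide by $n+1$ and remain in $A^1$, obtaining \eqref{eq:zk1}. The only thing to watch is that the $R$-component is tracked correctly through the product and that the final division does not leave $A^1$; both are immediate from the description of $\widehat A$.
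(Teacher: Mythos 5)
Your proposal is correct and follows essentially the same route as the paper: \eqref{eq:hirsch-left} is the left Hirsch identity built into Definition \ref{def:gr-cup1}, \eqref{eq:halfcup1d} is the $\cupd$ formula \eqref{eq:c1d} specialized to $db=0$, and \eqref{eq:zk1} comes from the defining formula \eqref{eq:d-a} for $\binom{a}{n}$ read inside the binomial ring $R\oplus A^1$. Your write-up merely supplies details the paper leaves implicit (bilinearity killing the $db\cup_1 a$ and $da\circ db$ terms, $\z_n(a)\in A^1$, commutativity of $\cup_1$, and torsion-freeness justifying the division by $n+1$), all of which check out; the paper also notes the alternative derivation of \eqref{eq:zk1} from \eqref{bin3-z} with $n=1$, which you do not need.
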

\begin{proof}
Equation \eqref{eq:hirsch-left} is the left-side Hirsch identity 
from \eqref{eq:hirsch-1c};
equation \eqref{eq:halfcup1d} is the $\cupd$ formula  \eqref{eq:c1d} 
in the case $db =0$; finally, equation \eqref{eq:zk1} follows straight 
from equation \eqref{eq:d-a}, or from equation \eqref{bin3-z} in the case 
$m=k+1$ and $n=1$. 
\end{proof}

For simplicity, we shall at times abuse notation and write formula \eqref{eq:zk1} 
as $\z_{n+1}(a) = \tfrac{1}{n+1} \big(\z_n(a)\cup_1 (a - n )\big)$.

\subsection{Differentiating the zeta maps}
\label{subsec:zeta-maps}
The goal for the rest of this section is to derive a formula for the differential of 
$\zeta_n(a)$ for $a$ a cocycle in $A^1$ and $n\ge 2$, in the setting where $A$ is a 
binomial $\cup_1$-dga. We start with the case $n=2$.

\begin{lemma}
\label{lem:good}
Let $A$ be a binomial $\cup_1$-dga.  If $a\in Z^1(A)$, then 
$d\z_2(a) = - a \cup a$. 
\end{lemma}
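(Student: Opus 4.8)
The plan is to unwind the definition of $\z_2$ and then apply the $\cupd$ formula to the single product $a\cup_1 a$. Starting from the defining formula \eqref{eq:zeta-A} — equivalently, from \eqref{eq:zk1} with $n=1$ — and using the multiplication rule $(r+a')(s+b') = rs + (rb'+sa'+a'\cup_1 b')$ on the ring $R\oplus A^1$ recalled after Definition~\ref{def:binomial-with-cup-one} to evaluate $a\cup_1(a-1) = a\cup_1 a - a$, one obtains the clean expression
\[
\z_2(a) = \tfrac{1}{2}\bigl(a\cup_1 a - a\bigr).
\]

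Next I would differentiate. Since $d$ is additive and $a\in Z^1(A)$, i.e.\ $da=0$, this reduces the problem to computing $d(a\cup_1 a)\in A^2$. For that I would invoke the $\cupd$ formula in the form \eqref{eq:halfcup1d} (the specialization of \eqref{eq:c1d} to $db=0$) with $b=a$; its hypotheses $da\in D^2(A)$ and $db=0$ hold trivially here because $da=db=0$. This gives
\[
d(a\cup_1 a) = -a\cup a - a\cup a + (da)\cup_1 a = -2\,(a\cup a),
\]
and therefore $d\z_2(a) = \tfrac12\,d(a\cup_1 a) = -a\cup a$, as claimed.

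There is no serious obstacle here; the argument is a two-line computation once the pieces are assembled. The only points that deserve a moment's care are (i) the rewriting of $\z_2(a)$, i.e.\ checking that the product $a\cup_1(a-1)$ taken in $R\oplus A^1$ really equals $a\cup_1 a - a$; and (ii) the division by $2$, which is harmless, since what the computation actually produces is the identity $2\,d\z_2(a) = -2\,(a\cup a)$, and one may either appeal to $\Z$-torsion-freeness of the coefficients (as holds in the motivating examples of Sections~\ref{sect:cochain} and \ref{sect:non-comm}) or simply read the whole argument inside $A\otimes_{\Z}\Q$, where $\z_2(a)$ is defined to begin with.
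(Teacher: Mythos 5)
Your proof is correct and follows essentially the same route as the paper: rewrite $\z_2(a)=\tfrac12(a\cup_1 a - a)$, apply the $\cupd$ formula in the form \eqref{eq:halfcup1d} with $b=a$ to get $d(a\cup_1 a)=-2\,a\cup a$, and divide by $2$. Your extra remarks on justifying the division (torsion-freeness, or reading the computation in $A\otimes_{\Z}\Q$) are a welcome bit of care that the paper leaves implicit.
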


\begin{proof}
By assumption, we have a map $\zeta_2\colon A^1\to A^1$ given by 
$\zeta_2(a) = \tfrac{1}{2}(a \cup_1 a - a)$
for all $a\in A^1$.  If $da=0$, then $d(a \cup_1 a) = - 2 a \cup a$, 
by \eqref{eq:halfcup1d}. Hence, 
\[
d\zeta_2(a) =  \tfrac{1}{2} d(a \cup_1 a - a) = - \tfrac{1}{2} (2 a \cup a)
		= - a \cup a,
\]
and we are done.
\end{proof}

In greater generality, we have the following result relating the cup product, 
the differential, and the $\zeta$-maps in a binomial $\cup_1$-dga.

\begin{theorem}
\label{thm:cup-zeta}
Let $A$ be a binomial $\cup_1$-dga.  Then, for each $a\in Z^1(A)$ and 
each $k \ge 0$, we have
\begin{equation}
\label{eq:da-cup}
d\z_k (a) = - \sum_{\ell = 1}^{k-1}\z_\ell(a) \cup \z_{k-\ell}(a).
\end{equation}
More generally, if $I=(k_1,\dots, k_n)$ with $k_i\ge 0$ 
and $\ba=(a_1, \ldots , a_n)$ with  $a_i\in Z^1(A)$, then
\begin{equation}
\label{eq:d-zeta}
d(\z_I(\ba)) = - \sum_{\substack{I_1 + I_2=I \\ I_j\ne \bz}} 
\z_{I_1}(\ba) \cup \z_{I_2}(\ba) .
\end{equation}
\end{theorem}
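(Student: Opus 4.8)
\emph{Overview.} The plan is to prove the one-variable identity \eqref{eq:da-cup} first, by induction on $k$, and then to deduce the general formula \eqref{eq:d-zeta} by packaging the binomials into generating series, which turns the multivariable statement into a single ``group-like'' identity.

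\emph{The one-variable case.} I would prove \eqref{eq:da-cup} by induction on $k$. The cases $k=0,1$ are trivial (both sides vanish, as $da=0$), and $k=2$ is Lemma \ref{lem:good}. For the step $k\to k+1$ I would apply $d$ to the recursion \eqref{eq:zk1}, rewritten as $(k+1)\,\z_{k+1}(a)=\z_k(a)\cup_1 a-k\,\z_k(a)$. Since $da=0$ and, by the inductive hypothesis, $d\z_k(a)\in D^2(A)$, the $\cupd$ formula \eqref{eq:halfcup1d} gives
\begin{equation*}
d\bigl(\z_k(a)\cup_1 a\bigr)=-\z_k(a)\cup a-a\cup\z_k(a)+\bigl(d\z_k(a)\bigr)\cup_1 a.
\end{equation*}
Substituting the inductive hypothesis for $d\z_k(a)$, expanding each summand $\bigl(\z_\ell(a)\cup\z_{k-\ell}(a)\bigr)\cup_1 a$ by the left Hirsch identity \eqref{eq:hirsch-left}, and then applying \eqref{eq:zk1} once more in the form $\z_m(a)\cup_1 a=(m+1)\,\z_{m+1}(a)+m\,\z_m(a)$ (valid for all $m\ge 0$) would turn the right-hand side into a linear combination of cup products $\z_i(a)\cup\z_j(a)$. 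A short coefficient count then shows that, for every $1\le m\le k$, the coefficient of $\z_m(a)\cup\z_{k+1-m}(a)$ equals $-(k+1)$, the endpoints $m=1,k$ picking up their contribution from $-\z_k(a)\cup a-a\cup\z_k(a)=-\z_k(a)\cup\z_1(a)-\z_1(a)\cup\z_k(a)$; dividing by $k+1$ gives \eqref{eq:da-cup} for $k+1$.

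\emph{The general case.} For each $j$ I would introduce the generating series $F_j=\sum_{k\ge 0}\z_k(a_j)\,T_j^k$, noting that $F_j-1\in A^1[[T_j]]$ because $\z_k(a_j)\in A^1$ for $k\ge 1$, and extend $\cup$, $\cup_1$, $\circ$ and $d$ to $A[[\mathbf{T}]]$ by $\Z[[\mathbf{T}]]$-linearity, so that all manipulations become formal identities in this ring. By \eqref{eq:da-cup}, $dF_j=-(F_j-1)\cup(F_j-1)$. Set $F=F_1\cup_1\cdots\cup_1 F_n$, the product taken in the binomial ring $R\oplus A^1$ and extended to series; since the $T_j$ are disjoint variables, the coefficient of $\mathbf{T}^I$ in $F$ is exactly $\z_I(\ba)$ by \eqref{eq:zeta-x}. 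It therefore suffices to establish $dF=-(F-1)\cup(F-1)$, and I would do this by induction on $n$. Writing $F=G\cup_1 F_n$ with $G=F_1\cup_1\cdots\cup_1 F_{n-1}$, put $\ov{G}=G-1$, $\ov{F}_n=F_n-1$, $P=\ov{G}\cup_1\ov{F}_n$, so that $F-1=\ov{G}+\ov{F}_n+P$ by associativity of $\cup_1$ on $A^1$. By the inductive hypothesis and \eqref{eq:da-cup}, $d\ov{G}=-\ov{G}\cup\ov{G}$ and $d\ov{F}_n=-\ov{F}_n\cup\ov{F}_n$ have decomposable coefficients, so the $\cupd$ formula \eqref{eq:c1d} applies to $\ov{G}$ and $\ov{F}_n$. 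Substituting these expressions for $d\ov{G}$ and $d\ov{F}_n$, expanding $(\ov{G}\cup\ov{G})\cup_1\ov{F}_n$ and $(\ov{F}_n\cup\ov{F}_n)\cup_1\ov{G}$ by \eqref{eq:hirsch-left} (and using commutativity of $\cup_1$ to identify $\ov{F}_n\cup_1\ov{G}=P$), and rewriting $(\ov{G}\cup\ov{G})\circ(\ov{F}_n\cup\ov{F}_n)=P\cup P$ via \eqref{eq:circ-op}, one collects everything into
\begin{equation*}
dF=d(\ov{G}\cup_1\ov{F}_n)=-(\ov{G}+\ov{F}_n+P)\cup(\ov{G}+\ov{F}_n+P)=-(F-1)\cup(F-1).
\end{equation*}
Reading off the coefficient of $\mathbf{T}^I$ then yields \eqref{eq:d-zeta}.

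\emph{Main obstacle.} The crux is the last display for \eqref{eq:d-zeta}: the terms appearing after the substitutions --- $\ov{G}\cup\ov{G}$, $\ov{F}_n\cup\ov{F}_n$, the mixed products $\ov{G}\cup\ov{F}_n$ and $\ov{F}_n\cup\ov{G}$, the four cross terms $\ov{G}\cup P$, $P\cup\ov{G}$, $\ov{F}_n\cup P$, $P\cup\ov{F}_n$ produced by the left Hirsch identity, and the term $P\cup P$ coming from the $\circ$-product --- must reassemble \emph{exactly} into $-(\ov{G}+\ov{F}_n+P)\cup(\ov{G}+\ov{F}_n+P)$. This is precisely where the $\cupd$ formula \eqref{eq:c1d} is indispensable rather than the weaker Steenrod identity \eqref{eq:steenrod-again}: the summand $-\bigl(d\ov{G}\bigr)\circ\bigl(d\ov{F}_n\bigr)=-P\cup P$ is exactly what completes the square. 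The inductive step for \eqref{eq:da-cup} rests on the analogous, lighter coefficient count.
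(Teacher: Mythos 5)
Your one-variable argument is essentially identical to the paper's: induction on $k$ via the recursion \eqref{eq:zk1}, the special case \eqref{eq:halfcup1d} of the $\cupd$ formula, the left Hirsch identity, and a coefficient count, with the division by $k+1$ carried out in $A\otimes_{\Z}\Q$. For the multivariable formula \eqref{eq:d-zeta}, however, you take a genuinely different route. The paper runs a double induction: an outer induction on the number $n$ of variables, handled by computing $d(b\cup_1 a_n)$ with $a_n$ a cocycle, followed by an inner induction on $k_n$ that climbs from $\z_1(a_n)$ to $\z_{k_n}(a_n)$ via \eqref{eq:zk1}; because one factor in each application of $\cup_1$ is always a cocycle, the paper only ever needs the $db=0$ special case \eqref{eq:halfcup1d}, and the $\circ$-term never appears. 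You instead encode the statement as the group-like identity $dF=-(F-1)\cup(F-1)$ for $F=F_1\cup_1\cdots\cup_1 F_n$ and prove it by a single induction on $n$, applying the \emph{full} $\cupd$ formula \eqref{eq:c1d} to $\ov{G}\cup_1\ov{F}_n$, where now both differentials are nonzero but decomposable; your nine-term bookkeeping is correct, and the $-d\ov{G}\circ d\ov{F}_n=-P\cup P$ contribution is exactly what completes the square, as you say. What your approach buys is conceptual economy (no inner induction, no reindexing of sums) and a clean explanation of why the $\circ$-operation belongs in the axioms; what the paper's approach buys is that it stays entirely within the weaker identity \eqref{eq:halfcup1d} and avoids having to justify the bilinear extension of \eqref{eq:c1d} and \eqref{eq:circ-op} to power series with decomposable differentials --- a point you do address, correctly, by noting that every identity involved is $R$-multilinear and that each coefficient of $\mathbf{T}^I$ involves only finitely many terms.
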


\begin{proof}
The first claim is proved by induction on $k$. Since $da=0$, 
the case $k=1$ is vacuous, and the case $k=2$ is Lemma \ref{lem:good}.  
Assume now that \eqref{eq:da-cup} holds for all $a$ in $Z^1(A)$. 
Working in $\Q$-vector space $A\otimes_{\Z} \Q$ and using the formulas 
listed in Lemma \ref{lem:3-properties}, we get
\begin{align*}
d\z_{k+1} (a) 
&\underset{\eqref{eq:zk1}}{=} 
\frac{1}{k+1}d\big(\z_k(a)\cup_1 a \big) - 
\frac{k}{k+1}d \z_k(a)
\\
 &\underset{ \substack{{\eqref{eq:halfcup1d}}\\
{\eqref{eq:da-cup}}} }{=}
 -\frac{1}{k+1} \Big( \z_k(a)\cup a + a \cup \z_k(a) - d\z_k(a)\cup_1 a\Big)
 + \frac{k}{k+1}  \sum_{\ell = 1}^{k-1}\z_\ell(a) \cup \z_{k-\ell}(a)
 \\
&\underset{\eqref{eq:da-cup}}{=} 
-\frac{1}{k+1} \Big( \z_k(a)\cup a + a \cup \z_k(a) + 
  \sum_{\ell = 1}^{k-1}\big(\z_\ell(a) \cup \z_{k-\ell}(a)\big) \cup_1 a\Big)
 + \frac{k}{k+1}  \sum_{\ell = 1}^{k-1}\z_\ell(a) \cup \z_{k-\ell}(a)
 \\
&\underset{\eqref{eq:hirsch-left}}{=} 
-\frac{1}{k+1} \Big( \z_k(a)\cup a + a \cup \z_k(a) + 
  \sum_{\ell = 1}^{k-1} \big(
  \z_\ell(a) \cup ( \z_{k-\ell}(a) \cup_1 a ) + 
   (\z_\ell(a) \cup_1 a)\cup  \z_{k-\ell}(a) \big)
  \Big)
  \\
  &\hspace{1.1in}
 + \frac{k}{k+1}  \sum_{\ell = 1}^{k-1}\z_\ell(a) \cup \z_{k-\ell}(a)
 \\
   &=-\frac{1}{k+1} \Big( \z_k(a)\cup a + a \cup \z_k(a) + 
     \\
  &\hspace{1.1in}
  \sum_{\ell = 1}^{k-1} \big(
  \z_\ell(a) \cup ( \z_{k-\ell}(a) \cup_1 (a -k+\ell)) + 
   (\z_{\ell}(a) \cup_1 (a-\ell))\cup  \z_{k-\ell}(a) 
  \big)\Big)
   \\
&\underset{\eqref{eq:zk1}}{=} 
-\frac{1}{k+1} \Big( \z_k(a)\cup a + a \cup \z_k(a) + 
     \\
  &\hspace{1.1in}
  \sum_{\ell = 1}^{k-1} \big(
(k-\ell+1) \cdot  \z_\ell(a) \cup  \z_{k-\ell+1}(a) + 
(\ell +1) \z_{\ell+1}(a) \cup  \z_{k-\ell}(a) 
  \big)\Big)
   \\
   &=-\frac{1}{k+1} 
  \sum_{\ell = 1}^{k} \Big(
(k-\ell+1) \cdot  \z_\ell(a) \cup  \z_{k-\ell+1}(a) + 
\ell  \z_{\ell}(a) \cup  \z_{k-\ell+1}(a) 
\Big)
   \\
 &=- \sum_{\ell = 1}^{k}\z_\ell(a) \cup \z_{k-\ell+1}(a).
 \end{align*}
 
To prove the second claim, we need to show the following: 
Given $a_1, \ldots , a_n \in Z^1(A)$ and $k_1,\dots, k_n\in \Z_{\ge 0}$, 
we have
\begin{equation}
\label{eq:daa-cup}
\begin{split}
d\bigl(\z_{k_1}(a_1)\cup_1 \cdots \cup_1 \z_{k_n}(a_n) \bigr)
 	= \\
	&\hspace{-1.2in}
	 - \sum_{\ell_1, \ldots , \ell_n}
		\bigl( \z_{\ell_1}(a_1)\cup_1 \cdots \cup_1 \z_{\ell_n}(a_n)\bigr)
		\cup
		\bigl( \z_{k_1 -\ell_1}(a_1)\cup_1 \cdots \cup_1 \z_{k_n-\ell_n}(a_n)\bigr),
\end{split}
\end{equation}
where the sum is over all $n$-tuples, $(\ell_1, \ldots, \ell_n)$, 
of non-negative integers such that $\ell_i \le k_i$, with
the tuples $(0, \ldots , 0)$ and $(k_1, \ldots , k_n)$ excluded. 
In particular, if we set 
$a= \z_{k_1}(a_1)\cup_1 \cdots \cup_1 \z_{k_{n}}(a_{n})$
we have that $da\in D^2(A)$, as needed in order to apply 
formula \eqref{eq:halfcup1d}. 

The above claim is established by induction on $n$, the 
base case $n=1$ being \eqref{eq:da-cup}, which has 
just been verified. Assume \eqref{eq:daa-cup} holds for 
$b\coloneqq \z_{k_1}(a_1)\cup_1 \cdots \cup_1 \z_{k_{n-1}}(a_{n-1})$, 
where not all $k_i$'s are equal to $0$. 
Then, for all $a_n\in Z^1(A)$, 
\begin{align*}
d (b\cup_1 a_n) 
&\underset{\eqref{eq:halfcup1d}}{=}  -b \cup a_n - a_n \cup b + (da)\cup_1 a_n
\\
    &\underset{\eqref{eq:daa-cup}_{n-1}}{\hspace*{-10pt}=} -b \cup a_n - a_n \cup b -
    \Bigg(\sum_{\ell_1, \ldots , \ell_{n-1}}
		\bigl( \z_{\ell_1}(a_1)\cup_1 \cdots \cup_1 \z_{\ell_{n-1}}(a_{n-1})\bigr)\, \cup
\\[-5pt]
&\hspace{2.2in}
		 \bigl( \z_{k_1 -\ell_1}(a_1)\cup_1\cdots\cup_1 \z_{k_{n-1}-
		\ell_{n-1}}(a_{n-1})\bigr)\Bigg)\cup_1 a_n
\\
&\underset{\eqref{eq:hirsch-left}}{=} 	
- \z_{k_1}(a_1)\cup_1\cdots\cup_1 \z_{k_{n-1}}(a_{n-1}) \cup \z_1(a_n) - 
\z_1(a_n) \cup  \z_{k_1}(a_1)\cup_1\cdots \cup_1\z_{k_{n-1}}(a_{n-1}) -
\\
&\hspace{0.3in}
\sum_{\ell_1, \ldots , \ell_{n-1}}\! \bigg(
		\bigl( \z_{\ell_1}(a_1)\cup_1\cdots\cup_1 \z_{\ell_{n-1}}(a_{n-1})\bigr)
		\cup
		\big( \z_{k_1 -\ell_1}(a_1)\cup_1\cdots\cup_1 \z_{k_{n-1}-
		       \ell_{n-1}}(a_{n-1}) \cup_1  \z_1(a_n)\big) +\bigg.
\\
&\hspace{0.4in}
              \bigg.  \big( \z_{\ell_1}(a_1)\cup_1\cdots \cup_1\z_{\ell_{n-1}}(a_{n-1}) \cup_1 \z_1(a_n) \big) \cup 
               \big(  \z_{k_1 -\ell_1}(a_1)\cup_1\cdots\cup_1 \z_{k_{n-1}-\ell_{n-1}}(a_{n-1})\big)  \bigg)\, ,
\end{align*}
where not all $\ell_i$'s are equal to $0$. Now set $\ell=k_n$.
Rewriting this expression as a single sum yields formula \eqref{eq:daa-cup} 
in the case when $\ell=1$. The general case follows by induction on $\ell$. 
Assuming equation \eqref{eq:daa-cup} holds for $\ell=k_n$, we get 
\begin{align*}
d (\z_{k_1}(a_1)\cup_1\cdots\cup_1 \z_{k_{n}+1}(a_{n})) 
&\hspace*{6pt}=\hspace*{6pt} d(b\cup_1  \z_{\ell+1}(a_{n}))\\
&\underset{\eqref{eq:zk1}}{=} 	
d\Big(b\cup_1\big(\tfrac{1}{\ell+1} \z_{\ell}(a_n)\cup_1 a_n - 
\tfrac{\ell}{\ell+1} \z_{\ell}(a_n)\big)\Big)\\
&\hspace*{6pt}=\hspace*{6pt}
\tfrac{1}{\ell+1} d\big( b\cup_1 \z_{\ell}(a_n)\cup_1a_n\big) - 
\tfrac{\ell}{\ell+1} d\big(b\cup_1\z_{\ell}(a_n)\big)\\
&\underset{\eqref{eq:halfcup1d}}{=} 	
\tfrac{1}{\ell+1} \Big(\! - b\cup_1\z_{\ell}(a_n) \cup a_n - 
a_n \cup b\cup_1 \z_{\ell}(a_n)\, +\\[-2pt]
&\hspace*{0.8in} d\big( b\cup_1\z_{\ell}(a_n)\big)\cup_1 a_n\Big) - 
\tfrac{\ell}{\ell+1} d\big(b\cup_1 \z_{\ell}(a_n)\big).
\end{align*}
Using the induction hypothesis to express $d(b\cup_1 \z_{\ell}(a_n))$ as 
a sum according to \eqref{eq:daa-cup}, we obtain (after simplifying as 
before) equation \eqref{eq:daa-cup}, with $k_n$ replaced by $k_n + 1$.
This completes the proof.
\end{proof}

In \cite{Porter-Suciu-2021-1}, we will provide a different proof of this theorem, in 
a more general setting, by dropping the assumption that $da_i=0$ in formula 
\eqref{eq:d-zeta}.
  
\subsection{Binomial operations on cochains and forms}
\label{subsec:binomials-cochains}
We now show that cochain complexes with coefficients in a
binomial ring are binomial $\cup_1$-dgas in a natural way.

\begin{theorem}
\label{thm:cochain-bincup1d}
For any non-empty $\Delta$-complex $X$ and binomial ring $R$, the cochain 
algebra $R \oplus C^{\ge 1} (X;R)$ is a 
binomial $\cup_1$-dga.
\end{theorem}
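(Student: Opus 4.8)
The plan is to verify the two defining conditions of Definition~\ref{def:binomial-cup-one-algebra}: that $R \oplus C^{\ge 1}(X;R)$ is a $\cup_1$-dga, and that it is a binomial graded algebra with cup-one products. The first condition is essentially Theorem~\ref{thm:cochain-cup1d}, which establishes that $C^*(X;R)$ is a $\cup_1$-dga for any commutative unital ring $R$; one checks that passing to the subalgebra $R \oplus C^{\ge 1}(X;R)$ (which truncates $C^0$ down to the scalar copy of $R$) does not disturb the structure, since the $\cup_1$-products that matter live on $C^1$ and $C^2$, the $\circ$ operation lives on $D^2 \subseteq C^2$, and the differential restricts correctly. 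So the real content is condition~\ref{cup1a-2} of Definition~\ref{def:binomial-with-cup-one}: that $R \oplus C^1(X;R)$, with multiplication given by $\cup_1$ on $C^1$, is a binomial ring.

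First I would recall from \S\ref{subsec:steenrod-cup-i}, formula~\eqref{eq:cup1-simplicial}, that for $1$-cochains $u, v$ one has $(u \cup_1 v)(e) = u(e)\cdot v(e)$ evaluated edgewise on each $1$-simplex $e$. This means $C^1(X;R) = \prod_{e \in X_1} R$ as a ring under $\cup_1$ (a product of copies of $R$ indexed by the $1$-simplices, with pointwise operations), and $R \oplus C^1(X;R)$ is then identified with the ring of functions $X_1 \to R$ that are "eventually constant equal to their scalar part" — more precisely, with the subring of $R^{X_1 \sqcup \{*\}}$, or simply with $R \times \prod_{e} R$ as a ring. Then I would invoke the fact that a product (finite or infinite) of binomial rings is a binomial ring: each $\zeta_n$ acts coordinatewise, torsion-freeness is inherited, and the binomial elements $\binom{a}{n}$ are computed coordinatewise and hence land back in the product. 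Since $R$ is a binomial ring by hypothesis, each factor is binomial, so $R \oplus C^1(X;R)$ is binomial. Alternatively, and perhaps more cleanly, I would use Lemma~\ref{lem:extend-ring}: $C^1(X;R)$ is an $R$-module, it is a (non-unital) commutative ring under $\cup_1$, and for each $a \in C^1(X;R)$ the element $\binom{a}{n}$, computed coordinatewise via $\binom{a(e)}{n}$, lies in $C^1(X;R)$ because $R$ is binomial; hence $R \oplus C^1(X;R)$ is a binomial ring with the binomial structure described right after Definition~\ref{def:binomial-with-cup-one}.

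The main obstacle — which is really a bookkeeping point rather than a deep difficulty — is confirming that the binomial ring structure produced by Lemma~\ref{lem:extend-ring} on $R \oplus C^1(X;R)$ is compatible with everything else in the $\cup_1$-dga structure, i.e.\ that the $\zeta_n^A$ maps defined by \eqref{eq:zeta-A} using iterated $\cup_1$-products genuinely agree with the coordinatewise binomials $e \mapsto \binom{a(e)}{n}$. This follows because $\cup_1$ on $C^1$ is associative and commutative (formula~\eqref{eq:cup1-simplicial} again), so the iterated product $a \cup_1 (a-1) \cup_1 \cdots \cup_1 (a-n+1)$ is computed edgewise as $a(e)(a(e)-1)\cdots(a(e)-n+1)$, and dividing by $n!$ in $A \otimes_{\Z} \Q$ gives $\binom{a(e)}{n} \in R$ since $R$ is binomial. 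I would close by noting that Theorem~\ref{thm:cochain-cup1d} (applied to $R \oplus C^{\ge 1}(X;R)$, or to $C^*(X;R)$ with the observation that the relevant operations are unaffected by the $C^0$-truncation) supplies the $\cup_1$-dga half, and the two halves together give a binomial $\cup_1$-dga, as required.
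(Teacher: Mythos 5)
Your proposal is correct and follows essentially the same route as the paper: invoke Theorem~\ref{thm:cochain-cup1d} for the $\cup_1$-dga structure, define the $\zeta_n$ maps edgewise by $\zeta_n(f)(e)=\zeta_n^R(f(e))$ using formula~\eqref{eq:cup1-simplicial}, and apply Lemma~\ref{lem:extend-ring} to conclude that $R\oplus C^1(X;R)$ is a binomial ring. Your added check that the iterated $\cup_1$-product definition~\eqref{eq:zeta-A} agrees with the coordinatewise binomials is a useful piece of bookkeeping the paper leaves implicit, but it does not change the argument.
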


\begin{proof}	
By Theorem \ref{thm:cochain-cup1d}, the cochain algebra 
$C^{\ge 1}(X;R)$ is a $\cup_1$-dga, with cup-one product 
on $A^1$ given by formula \eqref{eq:cup1-simplicial}. 
We define maps 
$\zeta^X_n\colon C^{1}(X;R)\to C^{1}(X;R)$ by setting 
\begin{equation}
\label{eq:zeta-cochain}
\zeta^X_n(f)(e) \coloneqq \zeta^R_n(f(e))
\quad\text{for $n \ge 1$}
\end{equation} 
for each $1$-cochain $f \in C^1(X;R)= \Hom(C_1(X;R),R)$ 
and $1$-simplex $e$ in $X$. 
Define $\z_0$ on $R \oplus C^1(X;R)$ by setting
$\z_0(f) = 1 \in R$ for all $f \in C^1(X;R)$ and 
$\z_0(r) = 1$ for all $r \in R$.
Then by Lemma \ref{lem:extend-ring} it follows that
$R \oplus C^1(X;R)$ is a binomial ring. This completes the proof.
\end{proof}

Note that the evaluation $\z_n(f)(e)$ is equal to $\binom{f(e)}{n}$; in the 
case $R=\Z$, this is simply a binomial coefficient.

\begin{remark}
\label{rem:comm-cup1-ring}
As noted in Example \ref{ex:interval}, the cup-product between $0$-cocycles 
and $1$-cocycles need not be commutative, even when $X$ is a $1$-simplex. 
This explains why in Theorem \ref{thm:cochain-bincup1d} we had 
to replace $C^0(X;R)$ by the ring $R$, so that the multiplication 
in $R\oplus C^1(X;R)$ is commutative.
\end{remark}

\begin{remark}
\label{rem:ryb-zeta}
If $A=C^*(X;\Z)$ is the cochain algebra on a $\Delta$-complex $X$, 
then the $\zeta_2$ map defined in \eqref{eq:zeta-cochain} equals minus 
the $\zeta$ map defined by Rybnikov in \cite{Rybnikov-2}. 
\end{remark}

\begin{example}
\label{ex:cochain-I-bis}
For the cochain algebra $C=C^{\ast}(I;\Z)$ from Example \ref{ex:interval}, 
the $\zeta_n$-maps are given by
$\z_n(mu) = \binom{m}{n} u$. In particular,
$\z_n(u) = 0$ for $n \ge 2$.
\end{example}

It is readily seen that the $\zeta$ maps of cochain algebras 
enjoy the following naturality property: 
If $h\colon X \to Y$ is a map of $\Delta$-complexes, 
then $\zeta^X_n(h^{\sharp}( f))=\zeta^Y_n(f)$, for all $f\in C^1(Y;R)$ 
and all $n\ge 0$.

\subsection{Binomial operations on non-commutative forms}
\label{subsec:binomials-omega}

We conclude this section by showing that non-commutative forms 
on a binomial ring are also binomial $\cup_1$-dgas.
 
\begin{theorem}
\label{thm:bin-omega}
If $A$ is a binomial ring over $R$, then 
$R \oplus \Omega^{\ge 1} (A)$ is a 
binomial $\cup_1$-dga.
\end{theorem}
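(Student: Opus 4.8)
The plan is to follow the template of Theorem \ref{thm:cochain-bincup1d}. By Theorem \ref{thm:omega-otid}, $(\Omega(A),d)$ is a $\cup_1$-dga, and replacing $\Omega^0(A)=A$ by the subring $R$ yields a sub-object $R\oplus\Omega^{\ge 1}(A)$ that is again a $\cup_1$-dga: this is legitimate because $dr=1\otimes r-r\otimes 1=0$ in $\Omega^1(A)$ for $r\in R$, and because $R\oplus\Omega^{\ge 1}(A)$ contains $1$ and is closed under the cup, cup-one and $\circ$ operations. In view of Definitions \ref{def:binomial-with-cup-one} and \ref{def:binomial-cup-one-algebra}, it then remains only to equip $R\oplus\Omega^1(A)$---with multiplication given by the cup-one product $\Omega^1(A)\otimes_R\Omega^1(A)\to\Omega^1(A)$---with the structure of a binomial ring, and for this I would use Lemma \ref{lem:extend-ring}.

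First I would note that, by formula \eqref{eq:cup1-t-alg} together with the commutativity of $A$, the cup-one product on $T^1(A)=A\otimes_R A$ coincides with the multiplication of the tensor-product $R$-algebra $A\otimes_R A$. Restricting to $\Omega^1(A)=\ker(\mu)$, where $\mu\colon A\otimes_R A\to A$, $a_0\otimes a_1\mapsto a_0a_1$, is a ring homomorphism, exhibits $(\Omega^1(A),\cup_1)$ as a (non-unital) ideal of $A\otimes_R A$ and identifies $R\oplus\Omega^1(A)$ with the unital subring $\mu^{-1}(R)\subseteq A\otimes_R A$; moreover $\Omega^1(A)$ is an $A$-module, hence a module over the binomial ring $R$. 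So, by Lemma \ref{lem:extend-ring}, it suffices to check that $\binom{\omega}{n}\in\Omega^1(A)$ for every $\omega\in\Omega^1(A)$ and every $n>0$.

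The hard part is the intermediate claim that $A\otimes_R A$ is itself a binomial ring. It is a homomorphic image of $A\otimes_{\Z}A$ (quotient by the $R$-bilinearity relations), and $A\otimes_{\Z}A$ is binomial by Corollary \ref{cor:bproduct}, hence a homomorphic image of some $\Int(\Z^X)$ by Theorem \ref{thm:Yau}; thus $A\otimes_R A$ is a homomorphic image of $\Int(\Z^X)$, and by Theorem \ref{thm:Yau} the only thing left to verify is that $A\otimes_R A$ is $\Z$-torsion-free. I expect this to be the main obstacle. Since the binomial ring $A$ is $\Z$-torsion-free and hence $\Z$-flat, I would establish torsion-freeness by showing that the canonical map $A\otimes_R A\to(A\otimes_R A)\otimes_{\Z}\Q\cong A_{\Q}\otimes_{R_{\Q}}A_{\Q}$ is injective---via a flatness/$\mathrm{Tor}$ argument exploiting that $R_{\Q}$ is flat over $R$, or directly from the structure theory of binomial rings. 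Granting the claim, the binomial operations of $A\otimes_R A$ restrict correctly: for $\omega\in\Omega^1(A)$ the element $\binom{\omega}{n}$, now legitimately computed in $A\otimes_R A$, satisfies $\mu\bigl(\binom{\omega}{n}\bigr)=\binom{\mu(\omega)}{n}=\binom{0}{n}=0$ for $n\ge 1$, so $\binom{\omega}{n}\in\ker(\mu)=\Omega^1(A)$. Lemma \ref{lem:extend-ring} then gives that $R\oplus\Omega^1(A)$ is a binomial ring, and combined with the first paragraph this shows $R\oplus\Omega^{\ge 1}(A)$ is a binomial $\cup_1$-dga, as required.
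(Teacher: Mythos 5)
Your overall strategy is the same as the paper's: both proofs reduce the statement to Theorem \ref{thm:omega-otid} (for the $\cup_1$-dga structure), identify the cup-one product in degree one with the multiplication of a tensor-product ring via formula \eqref{eq:cup1-t-alg}, and then invoke Lemma \ref{lem:extend-ring} to adjoin the unit $R$ to the non-unital ring $\Omega^1(A)$. Your closing computation $\mu\bigl(\binom{\omega}{n}\bigr)=\binom{\mu(\omega)}{n}=0$, showing that the binomial operations preserve the ideal $\ker\mu=\Omega^1(A)$, is a correct and welcome explicit step that the paper leaves implicit.

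The gap is in your third paragraph. The reduction, via Theorem \ref{thm:Yau}, to the $\Z$-torsion-freeness of $A\otimes_R A$ is the right bookkeeping, but the argument you sketch does not close it: injectivity of $A\otimes_R A\to(A\otimes_R A)\otimes_{\Z}\Q$ is \emph{equivalent} to torsion-freeness, so invoking it is circular, and flatness of $R_{\Q}$ over $R$ does not yield it. Concretely, the $n$-torsion of $A\otimes_R A$ is governed by $\tor_1^R(A,A/nA)$, which has no reason to vanish for an arbitrary binomial ring $R$ and binomial $R$-algebra $A$ (already for $R=\Int(\Z^{\{x\}})$ acting on $A=\Int(\Z^{\{y\}})$ by $x\mapsto 2y$ one expects $2$-torsion). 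The paper sidesteps this point entirely: it applies Corollary \ref{cor:bproduct} to $A\otimes_{\Z} A$, where torsion-freeness is automatic because torsion-free abelian groups are flat over $\Z$, identifies that multiplication with the cup-one product on $T^1(A)$, and then applies Lemma \ref{lem:extend-ring}; in effect the argument is run over $\Z$ (note that the paper's proof concludes that $\Z\oplus\Omega^1(A)$ is a binomial ring). To repair your version for a general base you would need a genuine proof that $A\otimes_R A$ is $\Z$-torsion-free in the relevant setting, or else you should follow the paper and work with the $\otimes_{\Z}$ tensor product from the outset.
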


\begin{proof}
Let $T(A)$ be the tensor algebra from Proposition \ref{prop:tensor-dga} 
and let $\Omega(A)\subset T(A)$ be the subalgebra with graded pieces 
defined in \eqref{eq:nc-Omega}. Recall that $\Omega(A)$ inherits from 
$T(A)$ the structure of a graded algebra with cup-one products.
Moreover, by Theorem \ref{thm:omega-otid}, the algebra 
$\Omega^\ast(A)$ is a cup-one dga (over $\Z$). 

Applying now Corollary \ref{cor:bproduct} with $R_1 = R_2 = A$, 
we obtain in a natural fashion a binomial ring structure on $A \otimes_{\Z} A$. 
Observe now that the multiplication in $A \otimes_{\Z} A$
given in the corollary coincides with the cup-one product 
defined on $T^1(A)$. Hence, the $\Z$-submodule 
$\Z\oplus T^1(A)\subset T^{\le 1}(A)$ acquires the structure  
of a commutative ring. Therefore, by 
Lemma \ref{lem:extend-ring}, 
$\Z\oplus \Omega^1(A)$
is also a binomial ring, and we are done.
\end{proof}

\section{Free cup-one algebras}
\label{sect:free-bin-dga}

In this section we define the notion of a free 
binomial cup-one dga, $\T(X)$, generated by a set, $X$.
We show that maps of binomial graded
algebras from $\T(X)$ to a binomial graded algebra $A$ are
determined by the map restricted to the elements in $X$ and
if $\T(X)$ and $A$ are binomial cup-one dgas then a map
from $\T(X)$ to $A$ commutes with the differential if and only if 
it commutes with the differential on elements in $X$. 
We will work here over the ring $R=\Z$ and abbreviate 
$\otimes=\otimes_{\Z}$. 

\subsection{The free binomial graded algebra}
\label{subsec:free-bin-alg}
Given a set $X$, we let $\T=\T(X)$ denote the free graded algebra
with $\T^1$  equal to the ideal of all polynomials without
constant term in the free 
binomial ring  $\Int(\Z^X)$.  We define a cup-one map, 
$\cup_1 \colon (\T^1 \otimes \T^1) \otimes \T^1 \to \T^2$,
by means of the left Hirsch identity \eqref{eq:hirsch-1c} and
a map $\circ \colon T^2 \otimes T^2 \to T^2$ by means
of equation \eqref{eq:circ-op}.
Then $\T(X)$ is a graded algebra with cup-one products called
the \emph{free binomial graded algebra with cup-one products
generated by $X$}.  

We now make this all more precise; we start with a definition.
Recall from section \ref{subsec:closure} that $\Int(\Z^X)$ denotes 
the free binomial ring of integer-valued polynomials in 
variables from $X$.

\begin{definition}
\label{def:free-binomial-dga}
The {\em free binomial graded algebra}\/ on a set $X$, denoted $\T=\T(X)$, 
is the tensor algebra on $\fm_X$, the ideal of all integer-valued polynomials 
in variables $X$ without constant term,
\begin{equation}
\label{eq:free-binomial-dga}
\T^{*}(X)=T^{*}(\fm_X) \, .
\end{equation} 
\end{definition}

By construction, $\T^0(X)=\Z$ and $\T^1(X)=\fm_X$, and so 
$\T^{\le 1}(X)=\T^0\oplus \T^1$ is isomorphic to the free binomial 
ring $\Int(\Z^X)$. 
By Theorem \ref{thm:basis}, $\T^1(X)$ is a free $\Z$-module, 
with basis consisting of all integer-valued polynomials of the form 
$\zeta_I=\zeta_I(\bx)$, where $I\colon X\to \Z_{\ge 0}$ has finite, non-empty 
support and $\bx=\supp(I)$. 

The $\Z$-module $\T^1$ comes endowed with a 
cup-one product map, $\T^1\otimes \T^1\to \T^1$, $a\otimes b\mapsto ab$. 
By analogy with the classical Hirsch formula for cochain algebras, we 
use this cup-one product to define a map $\T^2 \otimes \T^1\to \T^2$ by 
\begin{equation}
\label{eq:hirsch-b}
(a \otimes b)\otimes c \mapsto ac \otimes b + a \otimes bc \, . 
\end{equation}
For the terms in the $\cupd$ formula to be defined we include
the map $\circ \colon \T^2 \otimes \T^2 \to \T^2$ defined 
on basis elements by
\begin{equation}
\label{eq:circ-2}
(a_1 \otimes a_2) \circ (b_1 \otimes b_2)=
(a_1 b_1) \otimes (a_2 b_2) .
\end{equation}
With this structure,
$\T(X)$ is a graded algebra with cup-one products.

\subsection{A binomial $\cup_1$-dga structure on $\T(X)$}
\label{subsec:tx-cup1d}

The next step is to show that the free binomial graded algebra 
$\T(X)$ admits a natural structure of a binomial $\cup_1$-dga.

\begin{theorem}
\label{thm:cup1-t}
For any set $X$, the algebra $\T=\T(X)$ is a binomial $\cup_1$-dga, 
with differential $d_{\T}$ satisfying $d_{\T}(x) = 0$ for all $x \in X$.
\end{theorem}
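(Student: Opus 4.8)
The plan is to construct the differential $d_{\T}$ on all of $\T(X)$, then verify that $(\T(X), d_{\T})$ satisfies the three axioms of a $\cup_1$-dga (Definition \ref{def:cup-one-d-algebra}) and the extra binomial axiom (Definition \ref{def:binomial-cup-one-algebra}). We already know from the discussion preceding the theorem that $\T(X)$ is a graded algebra with cup-one products, with the maps $\cup_1\colon \T^2\otimes\T^1\to\T^2$ and $\circ\colon \T^2\otimes\T^2\to\T^2$ defined by \eqref{eq:hirsch-b} and \eqref{eq:circ-2}, and that $\T^{\le 1}(X)\cong\Int(\Z^X)$ is a binomial ring; so the binomial condition \ref{cup1a-2} and axioms \ref{cup1-1}--\ref{cup1-2} are essentially in place once one checks \eqref{eq:circ-2} is well defined on $D^2(\T)$ and satisfies \eqref{eq:circ-op}, which is immediate from the definitions. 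The real content is defining $d_{\T}$ so that it is a square-zero degree-one derivation satisfying the Leibniz rule, restricts to $0$ on $\T^1=\fm_X$ — wait, we only need $d_{\T}(x)=0$ for $x\in X$, not on all polynomials — and satisfies the $\cupd$ formula \eqref{eq:c1d}.

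The key point is that $d_{\T}$ is \emph{forced} on $\T^1$. Since $\T^1=\fm_X$ is spanned by the basis elements $\z_I$, and since $\T(X)$ is supposed to be a binomial $\cup_1$-dga with $d_{\T}(x)=0$ for $x\in X$, Theorem \ref{thm:cup-zeta} — which holds in \emph{any} binomial $\cup_1$-dga — dictates that
\begin{equation*}
d_{\T}(\z_I(\bx)) = -\sum_{\substack{I_1+I_2=I\\ I_j\ne\bz}} \z_{I_1}(\bx)\cup \z_{I_2}(\bx) \in D^2(\T).
\end{equation*}
So the strategy is: \emph{define} $d_{\T}$ on the $\Z$-basis $\{\z_I\}$ of $\T^1$ by this formula (and by $0$ on $\T^0=\Z$), then extend to all of $\T(X)=T^*(\fm_X)$ as the unique degree-one derivation with respect to the tensor product — that is, $d_{\T}(\alpha\cup\beta)=d_{\T}\alpha\cup\beta+(-1)^{|\alpha|}\alpha\cup d_{\T}\beta$ — using the universal property of the tensor algebra. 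One then must check: (i) $d_{\T}^2=0$; (ii) the $\cupd$ formula \eqref{eq:c1d} holds for $a,b\in\T^1$; and finally observe that the elements $x\in X$, being among the $\z_I$ with $I$ the indicator of a single variable with multiplicity one, have $d_{\T}(x)=0$ since in that case there are no nontrivial decompositions $I=I_1+I_2$. Well-definedness of the extension is automatic because $\fm_X$ is a free $\Z$-module, so $T^*(\fm_X)$ is free on monomials in the $\z_I$, and a derivation is determined by its values on generators with no relations to check.

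I expect the main obstacle to be verifying $d_{\T}^2=0$ and the $\cupd$ formula \eqref{eq:c1d} simultaneously and consistently — this is where a genuine combinatorial identity among the binomial polynomials $\z_I$ must be extracted. For $d_{\T}^2=0$ it suffices to check it on $\T^1$ (a derivation-squared is again a derivation, hence determined on generators), i.e. that $d_{\T}\big(\sum_{I_1+I_2=I}\z_{I_1}\cup\z_{I_2}\big)=0$; applying the derivation property and the formula for $d_{\T}$ on each $\z_{I_j}$, this becomes an alternating-sum cancellation over three-fold decompositions $I=I_1+I_2+I_3$, with the sign $(-1)^{|\z_{I_1}|}=-1$ producing the needed cancellation of the "inner" term against the symmetric split — essentially the same bookkeeping that appears in the proof of Theorem \ref{thm:cup-zeta}. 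For the $\cupd$ formula, one takes $a=\z_I$, $b=\z_J$ basis elements (both with decomposable differential by construction), and checks
\begin{equation*}
d_{\T}(a\cup_1 b) = -a\cup b - b\cup a + d_{\T}a\cup_1 b + d_{\T}b\cup_1 a - d_{\T}a\circ d_{\T}b,
\end{equation*}
where $a\cup_1 b = ab$ is just the product in $\Int(\Z^X)$, expanded in the basis via \eqref{bin3-z} when variables coincide and via $\z_I\z_J=\z_{I+J}$ when supports are disjoint; the left side is then computed from the defining formula for $d_{\T}$, and the right side from \eqref{eq:hirsch-b}, \eqref{eq:circ-2}. The disjoint-support case reduces to the single-variable additivity already implicit in Theorem \ref{thm:cup-zeta}; the overlapping case requires pushing \eqref{bin3-z} through $d_{\T}$, and this is the one spot where a nontrivial identity about integer-valued polynomials — rather than pure bookkeeping — is needed. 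Since $\T(X)$ is $\Z$-torsion-free I may verify everything after tensoring with $\Q$, where $\z_n(a)=\binom{a}{n}$ literally and the computations of Theorem \ref{thm:cup-zeta} transcribe directly; this avoids re-deriving \eqref{eq:zk1}-style recursions and makes the overlap case a routine (if tedious) consequence of the already-established machinery.
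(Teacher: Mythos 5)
Your skeleton agrees with the paper's up to a point: you define $d_{\T}$ on the basis $\{\zeta_I\}$ of $\T^1$ by the formula $d_{\T}(\zeta_I)=-\sum_{I_1+I_2=I,\, I_j\ne \mathbf{0}}\zeta_{I_1}\otimes\zeta_{I_2}$, extend by the graded Leibniz rule, and verify $d_{\T}\circ d_{\T}=0$ on $\T^1$ by the cancellation over three-fold decompositions $I=J_1+J_2+J_3$; all of this is exactly what the paper does. The divergence, and the gap, is in your treatment of the $\cupd$ formula. You propose to verify it directly on basis elements $\zeta_I\cup_1\zeta_J$, and you justify the hard (overlapping-support) case by saying that ``the computations of Theorem \ref{thm:cup-zeta} transcribe directly'' after tensoring with $\Q$. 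This does not work as stated, for two reasons. First, the proof of Theorem \ref{thm:cup-zeta} takes the $\cupd$ formula (in the form \eqref{eq:halfcup1d}) as a \emph{hypothesis}; invoking it to establish the $\cupd$ formula in $\T(X)$ is circular. Second, even setting circularity aside, that proof only ever uses the special case $db=0$ of the $\cupd$ formula, whereas in $\T(X)$ the generic basis elements $\zeta_I$ with $|I|\ge 2$ have nonzero (decomposable) differentials, so the full identity --- including the terms $d\zeta_I\cup_1\zeta_J+d\zeta_J\cup_1\zeta_I-d\zeta_I\circ d\zeta_J$ --- must be checked in a regime that the ``already-established machinery'' never touches. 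That is precisely the central difficulty of the theorem, and your plan leaves it unaddressed except for the remark that a ``nontrivial identity about integer-valued polynomials'' is needed.

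The paper resolves this by evaluation rather than by direct polynomial manipulation. It builds an auxiliary $2$-dimensional $\Delta$-complex $\Delta(X)$ (one vertex; $1$-simplices are functions $\ba\colon X\to\Z$; a $2$-simplex for each pair, with faces $\ba$, $\ba+\ba'$, $\ba'$) and a degree-preserving map $\varphi\colon \T^{\le 2}(X)\to C^{\bullet}(\Delta(X))$ sending $\zeta_I$ to the cochain $\ba\mapsto\zeta_I(\ba)$ and $\zeta_I\otimes\zeta_J$ to $(\ba,\ba')\mapsto\zeta_I(\ba)\zeta_J(\ba')$. This $\varphi$ commutes with $\cup$, $\cup_1$, $\circ$, and the differentials, and it is injective by the basis theorem for integer-valued polynomials (Theorem \ref{thm:basis}, applied to $\Int(\Z^{X\sqcup X'})$ in degree $2$). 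Since the target is a cochain algebra, where the $\cupd$ formula is already known to hold by Theorem \ref{thm:cochain-cup1d}, the formula pulls back to $\T(X)$. In effect the paper proves your ``nontrivial identity about integer-valued polynomials'' by observing that such polynomials are determined by their values on integer vectors. If you want to salvage a direct computational proof, you would need to carry out the overlapping-support verification honestly --- pushing \eqref{bin3-z} through $d_{\T}$ and matching against \eqref{eq:hirsch-b} and \eqref{eq:circ-2} --- without appealing to Theorem \ref{thm:cup-zeta}; the evaluation argument is the clean way to avoid that.
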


\begin{proof}
Our goal is to define a differential $d_{\T}\colon \T\to \T$ that 
satisfies the $\cupd$ formula \eqref{eq:c1d}. We start by setting 
$d_{\T}(x)= 0$ for all $x \in X$. Next, for each basis element 
$\zeta_I=\zeta_I(\bx)$ of $\T^1(X)$ with $\supp(I) = \bx$, we set 
\begin{equation}
\label{eq:dt-bx}
d_{\T}(\zeta_I(\bx)) = - \sum_{\substack{I_1 + I_2=I \\ I_j\ne \bz}} 
\zeta_{I_1}(\bx) \ot \zeta_{I_2}(\bx) .
\end{equation}
This formula defines a $\Z$-linear map, $d_{\T}\colon \T^1 \to \T^2$. 
It remains to show that $d_{\T}$ can be extended to a 
differential on the whole of $\T$ that satisfies the $\cupd$ formula.

To achieve this goal, we first define a $2$-dimensional $\Delta$-complex, 
$\Delta(X)$, as follows. There is a single vertex; a $1$-simplex is a function 
$\ba\colon X \to \Z$; finally, to each pair of $1$-simplices, $\ba$ and $\ba'$, 
we assign a $2$-simplex, $(\ba,\ba')$, with faces $\partial_0(\ba,\ba')=\ba'$, 
 $\partial_1(\ba,\ba')=\ba+\ba'$, and  $\partial_2(\ba,\ba')=\ba$. 
We let $C(X)\coloneqq (C^{\bullet}( \Delta(X)), d_{\Delta})$ 
denote the simplicial cochain algebra of $\Delta(X)$. 

Next, we define a degree-preserving, $\Z$-linear map 
$\varphi \colon \T^{\le 2}(X) \to C(X)$,  as follows. First 
define $\varphi\colon \T^0(X) \to C^0(X)$ by sending 
$1\in \T^0(X)=\Z$ to the the unit cochain $1\in C^0(X)$. 
For each basis element $\z_I$ of $\T^1(X)$, we set 
$\varphi(\z_I)\in C^1(X)$ equal to the $1$-cochain whose value 
on a $1$-simplex $\ba$ is $\z_I(\ba)$.
Finally, we set $\varphi(\z_I \otimes \z_{J})\in C^2(X)$ equal to the 
$2$-cochain whose value on a $2$-simplex $(\ba, \ba^\prime)$ is 
$\z_{I}(\ba)\cdot \z_{J}(\ba^\prime)$. 
It follows directly from the definitions that $\varphi$ commutes with
the cup and cup-one products. Comparing the expressions for the $\circ$ maps 
given in \eqref{eq:circ-cochains} and \eqref{eq:circ-2}, we 
infer that $\varphi \circ (\circ_{\T}) = (\circ_{\Delta}) \circ (\varphi\otimes \varphi)$.
Moreover, since $d_{\Delta}\varphi(x) = 0$ for all $x\in X$, 
Theorem \ref{thm:cup-zeta} applies; comparing the expressions  
given by equations \eqref{eq:daa-cup} and \eqref{eq:d-zeta}, we 
deduce that $\varphi \circ d_{\T} =   d_{\Delta} \circ \varphi$. 

We claim that $\varphi\colon \T^{\le 2}(X) \to C(X)$
is a monomorphism. To prove the claim, first suppose that 
$\varphi\bigl( \sum \alpha_{I}\z_I \bigr) =0$, for some
constants $\alpha_I$. Then for all $1$-simplices, $\ba$,
it follows that $\sum \alpha_I\z_I(\ba)= 0$, and so 
$\sum \alpha_I\z_I$ is the zero polynomial. Since the 
polynomials $\z_I$ are elements in a basis for $\Int(\Z^X)$, 
it follows that each $\alpha_I$ is equal to $0$. Therefore, 
$\varphi\colon \T^1(X) \to C^1(X)$ is a monomorphism.

Now suppose $\varphi\bigl( \sum \alpha_{I,J} \zeta_{I}\otimes \zeta_{J}\bigr) = 0$,  
for some $\alpha_{I,J}\in \Z$. Then 
$ \sum \alpha_{I,J} \zeta_{I}(\ba)\cdot \zeta_{J}(\ba')=0$, for all functions 
$\ba, \ba'\colon X\to \Z$. Let $X'$ be another (disjoint) copy of $X$, and for each $J$, 
let $J'\colon X'\to \Z_{\ge 0}$ be the corresponding indexing function. 
Viewing each $\zeta_{J'}$ as a polynomial in $\Int(\Z^{X'})$, it follows 
that the polynomial $\sum \alpha_{I,J} \zeta_{I}\cdot\zeta_{J'} \in \Int(\Z^{X\sqcup X'})$ 
is the zero polynomial. Note that, for each pair $I$ and $J$ of indexing functions, 
the functions $I$ and $J'$ have disjoint supports; hence, $\z_{I} \cdot \z_{J'}=\zeta_{K}$, 
where $\left.K\right|_X=I$ and $\left.K\right|_{X'}=J'$. 
Since these polynomials are elements in a basis for $\Int(\Z^{X\sqcup X'})$, 
it follows that each $\alpha_{I,J}$ is equal to $0$, thus showing that 
$\varphi \colon \T^2(X) \to C^2(X)$ is a monomorphism.

This completes the argument that the map 
$\varphi\colon \T^{\le 2}(X) \to C(X)$ is a monomorphism that
commutes with the cup products, the cup-one products, 
the $\circ$ maps, and the respective $d$ maps. 
By Theorem \ref{thm:cochain-bincup1d}, the differential 
$d_{\Delta}$ satisfies the $\cupd$ formula, and so it follows that 
$d_{\T}$ also satisfies the $\cupd$ formula \eqref{eq:c1d}.

Next, we extend $d_{\T}$ to $\T=\T(X)$ using the
graded Leibniz rule. To complete the proof, we need 
to show that $d_{\T} \circ d_{\T}=0$. It suffices to 
show this for the map $d_{\T} \circ d_{\T} \colon \T^1 \to \T^3$.
By applying $d_{\T}$ to the first factors in the sum 
of tensor products from the right-hand side of 
\eqref{eq:dt-bx}, we have that
\[
\sum d_{\T}(\z_{I_1}(\bx)) \ot \z_{I_2}(\bx)
= \sum \z_{J_1}(\bx) \ot \z_{J_2}(\bx) \ot \z_{J_3}(\bx),
\]
where the sum is over all finitely-supported functions 
$J_1,J_2,J_3\colon X\to \Z_{\ge 0}$ with $J_i \ne \bz$ 
and $J_1 + J_2 + J_3=I$. Similarly,
\[
\sum \z_{I_1}(\bx) \ot d_{\T}(\z_{I_2}(\bx))
= \sum \z_{J_1}(\bx) \ot \z_{J_2}(\bx) \ot \z_{J_3}(\bx),
\]
and using the graded Leibniz rule we have
\begin{align*}
d_{\T}\circ d_{\T}(\z_I(\bx))
		& = - \sum d_{\T}(\z_{I_1}(\bx)) \ot \z_{I_2}(\bx)
					+ \sum \z_{I_1}(\bx) \ot d_{\T}(\z_{I_2}(\bx))\\
		& = \sum 	- \z_{J_1}(\bx) \ot \z_{J_2}(\bx) \ot \z_{J_3}(\bx)
					+ \z_{J_1}(\bx) \ot \z_{J_2}(\bx) \ot \z_{J_3}(\bx)\\
		& = 0.	
\end{align*}
This completes the proof.
\end{proof}

\begin{remark}
\label{rem:non-zero-d}
Examples of $\cupd$ dgas, $(\T(X),d)$ for which $d(x)$ is not zero
for all $x\in X$ along with applications will be given in 
\cite{Porter-Suciu-2021-1}.
\end{remark}

\subsection{Maps with domain $\T(X)$}
\label{subsec:tx-maps}

We will show in \cite{Porter-Suciu-2021-1} how to enhance the structure  
of the free binomial graded algebra on a set $X$ so as to construct a 
$1$-minimal model over the integers for an arbitrary binomial $\cup_1$-dga. 

As a stepping stone towards that goal, we show in the remainder of this 
section that maps from $\T(X)$ to a binomial graded algebra with 
cup-one products, $A$, are determined by maps of sets 
from $X$ to $A^1$ and in the case that $\T(X)$ and $A$ 
are binomial $\cup_1$-dgas, then a map of binomial 
$\cup_1$-dgas commutes with the respective differentials 
if and only if the map commutes with the differentials of the 
generators $x \in X$. We start with an extension lemma.

\begin{lemma}
\label{lem:TtoA}
Let $X$ be a set, let $A$ be a binomial graded $\Z$-algebra with 
cup-one products, and let $\phi \colon X \to A^1$ be a map of sets. 
There is then a unique extension of $\phi$ to a map $f \colon \T(X) \to A$ 
of binomial graded algebras.
\end{lemma}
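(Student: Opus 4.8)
The plan is to build the extension $f\colon \T(X)\to A$ in stages that mirror the construction of $\T(X)$ itself. First I would use Corollary \ref{lem:extend-map}: since $A^{\le 1}=A^0\oplus A^1 \supseteq R\oplus A^1$ and the latter is a binomial ring by hypothesis (part \ref{cup1a-2} of Definition \ref{def:binomial-with-cup-one}), the set map $\phi\colon X\to A^1\subseteq R\oplus A^1$ extends uniquely to a map of binomial rings $\tilde\phi\colon \Int(\Z^X)\to R\oplus A^1$. Because $\phi(x)\in A^1$ has no ``constant term'' and $\tilde\phi$ is determined on the $\Z$-basis $\{\zeta_I\}$ of $\Int(\Z^X)$ by $\tilde\phi(\zeta_I(\bx))=\zeta_I(\phi(x_1),\dots,\phi(x_n))$, which lies in $A^1$ whenever $I\ne\bz$, the restriction of $\tilde\phi$ to $\fm_X=\T^1(X)$ lands in $A^1$. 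This gives the degree-$1$ part $f^1\coloneqq \tilde\phi|_{\fm_X}\colon \T^1(X)\to A^1$.

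Next I would extend $f^1$ to all of $\T(X)=T^*(\fm_X)$ using the universal property of the tensor algebra. Since $\T^n(X)=\fm_X^{\otimes n}$, the map $f$ is forced in every degree by multiplicativity: $f(b_1\otimes\cdots\otimes b_n)=f^1(b_1)\cup\cdots\cup f^1(b_n)$, and one checks this is well-defined and an algebra map in the usual way. For $f$ to be a map of binomial graded algebras with cup-one products, it must additionally commute with $\cup_1$ and with $\circ$. On $\T^1\otimes\T^1\to\T^1$ the cup-one product is just the multiplication in $\Int(\Z^X)$, so $f(a\cup_1 b)=\tilde\phi(ab)=\tilde\phi(a)\cup_1\tilde\phi(b)=f(a)\cup_1 f(b)$ because $\tilde\phi$ is a ring map into $R\oplus A^1$; this also forces $f$ to commute with the $\zeta_n$ operations, $f(\zeta_n^{\T}(a))=\zeta_n^A(f(a))$, by Corollary \ref{lem:extend-map}. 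For the map $\cup_1\colon \T^2\otimes\T^1\to\T^2$ defined by \eqref{eq:hirsch-b}, compatibility with $f$ follows from the left Hirsch identity \eqref{eq:hirsch-1c} in $A$: $f((a\otimes b)\cup_1 c)=f(ac\otimes b+a\otimes bc)=(f(a)\cup_1 c')\cup b' + a'\cup(b'\cup_1 c') = (a'\cup b')\cup_1 c'$, writing $a'=f(a)$, etc. Similarly, compatibility with $\circ$ is immediate from \eqref{eq:circ-2} and \eqref{eq:circ-op}.

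Finally I would address uniqueness: any map $g\colon \T(X)\to A$ of binomial graded algebras extending $\phi$ must agree with $\tilde\phi$ on $\T^{\le 1}(X)\cong\Int(\Z^X)$ by the uniqueness clause of Corollary \ref{lem:extend-map}, and then must agree with $f$ on every $\T^n(X)$ since $\T^n(X)$ is spanned by products of elements of $\T^1(X)$ and $g$ is multiplicative. Hence $g=f$.

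The main obstacle is bookkeeping rather than conceptual: one must be careful that the ambient binomial ring into which $\Int(\Z^X)$ maps is $R\oplus A^1$ (with the Lemma \ref{lem:extend-ring} structure), not $A^0\oplus A^1$, since the latter need not be commutative (Example \ref{ex:interval}); and one must verify that $\tilde\phi$ carries the ideal $\fm_X$ of constant-term-free polynomials into $A^1$ and not merely into $R\oplus A^1$, which is where the hypothesis $\phi(X)\subseteq A^1$ is used. Everything else is a routine check that $f$ respects each of the finitely many structure maps ($\cup$, the two instances of $\cup_1$, and $\circ$) on generators, which then propagates by multiplicativity and $\Z$-linearity.
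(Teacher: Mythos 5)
Your proposal is correct and follows essentially the same route as the paper: extend $\phi$ to a binomial ring map $\Int(\Z^X)\cong\T^{\le 1}(X)\to \Z\oplus A^1$ via Corollary \ref{lem:extend-map}, observe that $\fm_X$ lands in $A^1$, and then extend by the freeness of the tensor algebra on $\T^1(X)$, with uniqueness coming from the same two ingredients. The only difference is that you spell out the verifications (that $f$ respects the map $\cup_1\colon\T^2\otimes\T^1\to\T^2$ via the left Hirsch identity, and the $\circ$ map) which the paper leaves implicit in the phrase ``extends uniquely to a map of binomial graded algebras with cup-one products.''
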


\begin{proof}
Recall that $\T^{\le 1}(X)$ is a graded ring, with $\T^0(X)=\Z$ the 
constant polynomials, and $\T^1(X)$ all integer-valued polynomials 
in variables $X$ with zero constant term.  
In view of Definition \ref{def:binomial-with-cup-one}, 
the $\Z$-submodule $\Z\oplus A^1 \subset A^{\le 1}$, with multiplication 
$A^1\otimes A^1\to A^1$ given by the cup-one product, is a binomial ring.

The proof of Corollary \ref{lem:extend-map} shows that the set map 
$\phi \colon X \to A^1$ extends to a map of binomial rings, 
$\tilde\phi\colon \Z\oplus \T^1(X)\to \Z\oplus A^1$ which is the identity 
in degree $0$.  Finally, since $\T(X)$ is the free graded algebra 
generated by $\T^1(X)$, the restriction of $\tilde\phi$ to degree $1$ 
pieces, $f \colon \T^1(X) \to A^1$, extends uniquely to a map 
$f \colon \T(X) \to A$ of binomial graded algebras with cup-one products.
\end{proof}

\begin{theorem}
\label{thm:tx-a}
Let $X$ be a set, let $(\T(X), d_{\T})$ and $(A,d_A)$ be
binomial $\cup_1$-dgas, and let $f\colon \T(X) \to A$ be a map
of graded algebras with cup-one products. Then $f$ 
commutes with the respective differentials if and only if 
$d_A \circ f(x) = f \circ d_{\T}(x)$ for all $x \in X$.
\end{theorem}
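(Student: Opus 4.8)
The plan is to show that the "if" direction---which is the only non-trivial one, since the "only if" direction is immediate---follows by reducing the verification of $d_A \circ f = f \circ d_{\T}$ to generators, using multiplicativity with respect to the cup-product, the cup-one product, and the binomial operations $\zeta_n$. The strategy rests on the observation that $\T(X)$ is generated, as a binomial $\cup_1$-dga, by the set $X$: every element of $\T^1(X)$ is a $\Z$-linear combination of basis polynomials $\zeta_I(\bx)$, each of which is built from the generators $x \in X$ by iterated applications of the binomial operations $\zeta_n$ and the cup-one product (by formula \eqref{eq:zeta-x}), and every element of $\T^{\ge 2}(X)$ is obtained from $\T^1(X)$ by iterated cup-products.

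First I would establish the base case in degree one: I claim that $d_A(f(a)) = f(d_{\T}(a))$ for all $a \in \T^1(X)$, given that this holds for $a = x \in X$. By $\Z$-linearity it suffices to treat $a = \zeta_I(\bx) = \zeta_{i_1}(x_1)\cup_1 \cdots \cup_1 \zeta_{i_n}(x_n)$. Since $f$ is a map of binomial graded algebras with cup-one products, $f(\zeta_I(\bx)) = \zeta_{i_1}(f(x_1)) \cup_1 \cdots \cup_1 \zeta_{i_n}(f(x_n))$, where now the $\zeta$'s and $\cup_1$ are those of $A$. Because $d_A \circ f(x) = f \circ d_{\T}(x) = f(0) = 0$ for all $x \in X$, each $f(x)$ is a cocycle in $A^1$. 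Therefore Theorem \ref{thm:cup-zeta}, specifically formula \eqref{eq:daa-cup} with $a_k = f(x_k) \in Z^1(A)$, computes $d_A\bigl(\zeta_{i_1}(f(x_1)) \cup_1 \cdots \cup_1 \zeta_{i_n}(f(x_n))\bigr)$ as a sum of cup-products of terms of the form $\zeta_{\ell_1}(f(x_1))\cup_1\cdots\cup_1\zeta_{\ell_n}(f(x_n)) = f(\zeta_{(\ell_1,\dots,\ell_n)}(\bx))$. Comparing this against $f(d_{\T}(\zeta_I(\bx)))$, which by the defining formula \eqref{eq:dt-bx} for $d_{\T}$ and the multiplicativity of $f$ is the same sum of cup-products with $f$ applied to each factor, one sees the two agree. (Here I use that \eqref{eq:dt-bx} and \eqref{eq:daa-cup} are literally the same combinatorial sum.)

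Next I would run the induction up the cup-product filtration. Suppose $d_A(f(u)) = f(d_{\T}(u))$ and $d_A(f(v)) = f(d_{\T}(v))$ for homogeneous elements $u, v$; then since $f$ is an algebra map and both $d_A$ and $d_{\T}$ satisfy the graded Leibniz rule,
\[
d_A(f(uv)) = d_A(f(u))f(v) + (-1)^{|u|} f(u) d_A(f(v)) = f(d_{\T}u)f(v) + (-1)^{|u|}f(u)f(d_{\T}v) = f(d_{\T}(uv)).
\]
Since every element of $\T(X)$ is a $\Z$-linear combination of products of elements of $\T^1(X)$, together with the unit in degree zero (on which both sides vanish trivially, $d$ being degree $+1$ and $\T^0 = \Z$ with $d_{\T}|_{\T^0} = 0$, matching $d_A(f(1)) = d_A(1) = 0$), this completes the induction and hence the proof.

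The main obstacle I anticipate is purely bookkeeping: making the identification between the right-hand side of \eqref{eq:dt-bx} (the definition of $d_{\T}$ on a basis element $\zeta_I(\bx)$, a sum over decompositions $I = I_1 + I_2$ with $I_j \ne \bz$) and the output of \eqref{eq:daa-cup} applied to $f(\zeta_I(\bx))$ completely airtight, since the latter is phrased as a sum over $n$-tuples $(\ell_1,\dots,\ell_n)$ with $\ell_i \le k_i$ and the endpoints excluded, and one must check that under the correspondence $I_j \leftrightarrow (\ell_1,\dots,\ell_n)$ these index sets and the signs match term-by-term. This is not conceptually hard---it is precisely the content already exploited in the proof of Theorem \ref{thm:cup1-t}, where the map $\varphi$ was shown to intertwine $d_{\T}$ and $d_{\Delta}$---but it requires care to state cleanly without re-deriving \eqref{eq:daa-cup}.
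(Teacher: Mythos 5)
Your treatment of degrees $\ge 2$ (the Leibniz induction over cup-products) matches the paper's, and the ``only if'' direction is indeed immediate. The gap is in your degree-one step: you write ``$d_A \circ f(x) = f \circ d_{\T}(x) = f(0) = 0$,'' i.e., you assume $d_{\T}(x)=0$ for all $x\in X$. But Theorem \ref{thm:tx-a} does not make that assumption: it is stated for an arbitrary differential $d_{\T}$ making $\T(X)$ a binomial $\cup_1$-dga, and Remark \ref{rem:non-zero-d} explicitly flags that differentials with $d_{\T}(x)\neq 0$ exist and will be used. The hypothesis $d_{\T}(x)=0$ is precisely the extra assumption of Corollary \ref{cor:freebindga}, so your argument proves that corollary but not the theorem. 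Concretely, once $d_{\T}(x)\neq 0$ the elements $f(x)$ need not lie in $Z^1(A)$, so Theorem \ref{thm:cup-zeta} (formula \eqref{eq:daa-cup}), which requires its arguments to be cocycles, does not apply; nor is \eqref{eq:dt-bx} any longer the formula for $d_{\T}$ on the basis elements $\z_I(\bx)$.

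The paper's proof avoids this by running a structural induction on $\T^1(X)$ using only identities valid for arbitrary decomposable differentials: equation \eqref{eq:zk1} expresses $\z_{n+1}(a)$ in terms of $\z_n(a)\cup_1 a$; the $\cupd$ formula \eqref{eq:c1d} expresses $d(a\cup_1 b)$ in terms of $a\cup b$, $b\cup a$, $da\cup_1 b$, $db\cup_1 a$, and $da\circ db$; and the left Hirsch identity \eqref{eq:hirsch-1c} reduces the resulting $\cup_1$ and $\circ$ terms to cup and cup-one products of degree-one elements. Since $f$ preserves all of these operations, and $\T^2=D^2(\T)$ so the decomposability hypotheses are automatic, the identity $d_A\circ f = f\circ d_{\T}$ propagates from the generators $x$ to all of $\T^1(X)$ with no cocycle hypothesis. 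If you add the standing assumption $d_{\T}(x)=0$, your argument is essentially correct, but it should then be presented as a proof of Corollary \ref{cor:freebindga} rather than of Theorem \ref{thm:tx-a}.
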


\begin{proof}
From the $\cupd$ formula \eqref{eq:c1d}, equation \eqref{eq:zk1},
and the left Hirsch identity \eqref{eq:hirsch-1c}, it follows that if
$d_A \circ f(x) = f \circ d_{\T}(x)$ for all $x \in X$, then
$d_A \circ f(a) = f \circ d_{\T}(a)$ for all $a \in \T^1(X)$.
The result now follows, since $\T^n$, for $n \ge 2$,
is generated by products of elements in $\T^1$, and 
both $d_{\T}$ and $d_A$ satisfy the graded Leibniz rule.
\end{proof}

The next corollary follows at once from Theorem \ref{thm:tx-a}.

\begin{corollary}
\label{cor:freebindga}
Let $(\T(X), d)$ be the free binomial $\cup_1$-dga on a set $X$, 
with $d(x) = 0$ for all $x \in X$.  If $A$ is a binomial $\cup_1$-dga, 
then there is a bijection between binomial $\cup_1$-dga maps from 
$(\T(X), d)$ to $(A, d_A)$ and set maps from $X$ to $Z^1(A)$.
\end{corollary}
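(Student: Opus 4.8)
The plan is to build the claimed bijection out of the two preceding results, Lemma~\ref{lem:TtoA} and Theorem~\ref{thm:tx-a}, using only the hypothesis that $d(x)=0$ for all $x\in X$.

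First I would write down the two maps to be compared. In one direction, a binomial $\cup_1$-dga map $f\colon(\T(X),d)\to(A,d_A)$ restricts to the generating set $X\subset\T^1(X)$ (recall that $x$ is the polynomial $\zeta_1(x)\in\fm_X$); since $f$ commutes with the differentials and $d(x)=0$, one has $d_A(f(x))=f(d(x))=0$, so $f(x)\in Z^1(A)$, and $x\mapsto f(x)$ is a set map $X\to Z^1(A)$. In the other direction, given a set map $\phi\colon X\to Z^1(A)\subseteq A^1$, Lemma~\ref{lem:TtoA} furnishes a unique extension to a map $f\colon\T(X)\to A$ of binomial graded algebras with cup-one products; I would then apply Theorem~\ref{thm:tx-a} to conclude that $f$ commutes with the differentials, the hypothesis of that theorem being satisfied because $d_{\T}(x)=0$ and $f(x)=\phi(x)\in Z^1(A)$ force $d_A(f(x))=0=f(d_{\T}(x))$ for every $x\in X$. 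Thus $f$ is a morphism of binomial $\cup_1$-dgas.

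Next I would check that the two assignments are mutually inverse. Going from $\phi$ to its extension $f$ and restricting back to $X$ recovers $\phi$, since the extension supplied by Lemma~\ref{lem:TtoA} agrees with $\phi$ on $X$ by construction. Going from a binomial $\cup_1$-dga map $f$ to its restriction and re-extending recovers $f$: the re-extension is a map of binomial graded algebras with cup-one products out of $\T(X)$ that agrees with $f$ on $X$, so by the uniqueness clause of Lemma~\ref{lem:TtoA} it coincides with $f$. This establishes the bijection.

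I do not anticipate a real obstacle here; the one place that deserves a moment of care is confirming that the extension of $\phi$ actually lands among the $\cup_1$-dga morphisms and not merely among maps of binomial graded algebras with cup-one products, and this is precisely what Theorem~\ref{thm:tx-a} delivers once we observe that both $d_{\T}(x)$ and $d_A(\phi(x))$ vanish.
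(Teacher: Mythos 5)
Your argument is correct and is exactly the route the paper intends: the paper simply states that the corollary ``follows at once from Theorem \ref{thm:tx-a}'', and your write-up supplies the details (restriction lands in $Z^1(A)$ because $d(x)=0$; Lemma \ref{lem:TtoA} gives the unique extension; Theorem \ref{thm:tx-a} upgrades it to a $\cup_1$-dga map; uniqueness gives the bijection). No gaps.
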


\section{$\Z_p$-binomial rings and binomial $\cup_1$-dgas}
\label{sect:zp-binomial}

The purpose of this section is to extend the results in the
previous sections to binomial rings and $\cup_1$-dgas 
over the prime field of characteristic $p>0$.

\subsection{Definition and properties of $\Z_p$-binomial algebras}
\label{subsec:zp-binomial algebras}

Fix a prime $p$, and let $\Z_p=\Z/p\Z$ be the field with $p$ elements.
Let $A$ be a commutative $\Z_p$-algebra; we will assume that the structure 
map $\Z_p\to A$ which sends $1\in \Z_p$ to the identity $1\in A$ is injective. 
Note that the binomial operations $\z_n(a) = a(a-1)\cdots(a-n+1)/n!$ with $a\in A$ 
are defined for $1 \le n \le p-1$, since $n!$ is then a unit in $\Z_p$.

\begin{example}
\label{ex:cochains-zp}
Let $A = C^*(X;\Z_p)$ be the cochain algebra 
of a $\Delta$-complex $X$ over $\Z_p$. For a cochain $a \in A^1$, 
we have that $a(a-1)\cdots(a-n+1)= 0$ for $n \ge p$, where the 
product is the $\cup_1$-product on $A^1$. 
To see this, let $e$ be any $1$-simplex in $X$; then the elements
$a(e), a(e)-1, \ldots , a(e)-p+1$ are distinct elements in
$\Z_p$. Since there are $p$ of these elements, one of the
elements must be $0$ and the property follows.
\end{example}

This motivates the following definition.

\begin{definition}
\label{def:zp-binomial algebra}
Let $A$ be a commutative $\Z_p$-algebra. We say that $A$ is a 
\emph{$\Z_p$-binomial algebra}\/ if 
$a(a-1)\cdots(a-n+1)= 0$ for all integers $n \ge p$ and all $a \in A$.
\end{definition}

The next step is to derive properties of binomials in a 
$\Z_p$-binomial algebra analogous to those for a binomial
ring over $\Z$. We start by defining the analogue of $\Int(\Z^X)$.

Given a set $X$, we will denote by $\Int(\Z_p^X)$ the quotient of
the free binomial algebra $\Int(\Z^X)$ by the ideal generated 
by the elements $\z_n(x)$ for $x \in X$ and $n \ge p$, tensored with
$\Z_p$. We next show that 
$\Int(\Z_p^X)$ has $\Z_p$-basis given by products of the elements
$\z_i(x)$ for $0< i <p$ and $x \in X$. Recall from \eqref{eq:zeta-x} 
that, for a finite subset $\bx=\{x_1,\dots, x_n\}\subset X$ and a 
finitely supported function $I\colon X\to \Z_{\ge 0}$, we write 
$\zeta_I(\bx)=\prod_{k=1}^{n}\zeta_{I(x_k)}(x_k)$.

\begin{lemma}
\label{lem:basis-p} 
The ring $\Int(\Z_p^X)$ is a $\Z_p$-binomial algebra, with $\Z_p$-basis 
given by the $\Z_p$-valued polynomials $\zeta_I(\bx)$ with 
$I\colon X\to \{0,\dots, p-1\}$.
\end{lemma}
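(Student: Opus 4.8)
The plan is to prove the two assertions of Lemma~\ref{lem:basis-p} separately, each reducing to the corresponding fact about $\Int(\Z^X)$ established in Theorem~\ref{thm:basis}. Write $B = \Int(\Z^X)$ and let $\mf{a}\subset B$ be the ideal generated by $\{\z_n(x) : x\in X,\ n\ge p\}$, so that $\Int(\Z_p^X) = (B/\mf{a})\otimes_{\Z}\Z_p = B/(\mf{a} + pB)$. First I would verify that $\Int(\Z_p^X)$ is a $\Z_p$-binomial algebra: one must check that the $\z$-maps on $B$ descend to $B/\mf{a}$ and remain $\Z_p$-linear after tensoring, and that $a(a-1)\cdots(a-n+1)=0$ for $n\ge p$ and every $a$ in the quotient. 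The last identity is the crux of the algebra structure; the clean way to get it is to use Corollary~\ref{lem:extend-map}: any $a\in \Int(\Z_p^X)$ lifts to some $\bar a\in B$, and $\bar a$ is a $\Z$-linear combination of basis elements $\z_I(\bx)$, each of which is a product of terms $\z_i(x)$; using the multiplicativity axiom \eqref{bin2-z} together with \eqref{bin3-z} one expresses $\z_n(\bar a)$ as a $\Z$-linear combination of products of the $\z_m(x)$, and in this expression every term with some $m\ge p$ lies in $\mf{a}$ while --- this is the point requiring a short computation --- the remaining ``small'' terms carry coefficients divisible by $p$ whenever $n\ge p$. Concretely, $\z_p(x) \equiv 0$ in $\Int(\Z_p^X)$ already by construction, and $\z_n$ for $n>p$ is obtained from $\z_{n-1}$ and $\z_1$ via \eqref{bin3-z}, so an induction on $n$ handles all $n\ge p$. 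Hence $a(a-1)\cdots(a-n+1) = n!\,\z_n(a) = 0$ for $n\ge p$ (note $n!$ need not be a unit here, but $n!\z_n$ is still a well-defined integer polynomial and we are showing the class of $\z_n(\bar a)$ itself vanishes).

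Next I would establish the basis claim. Let $\Lambda$ denote the set of functions $I\colon X\to\{0,\dots,p-1\}$ with finite support; the goal is that $\{\z_I(\bx) : I\in\Lambda\}$ is a $\Z_p$-basis for $\Int(\Z_p^X)$. For spanning: by Theorem~\ref{thm:basis} the full family $\{\z_J : J\colon X\to\Z_{\ge 0}\text{ finitely supported}\}$ is a $\Z$-basis of $B$, so it suffices to show each $\z_J$ with some value $\ge p$ becomes, modulo $\mf{a}+pB$, a $\Z_p$-combination of $\z_I$ with $I\in\Lambda$. Since $\z_J = \prod_k \z_{J(x_k)}(x_k)$, it is enough to rewrite a single-variable factor $\z_m(x)$ with $m\ge p$: using \eqref{bin3-z} (or directly the congruence $\binom{x}{m}\equiv \prod \binom{x}{p^{e_i}}\pmod p$ coming from Lucas' theorem, which expands $\binom{x}{p}\equiv \binom{x^p-x}{1}/1 \equiv 0$... more simply, just $\z_m(x)\in\mf{a}$ for $m\ge p$ by definition of $\mf{a}$), one sees $\z_m(x)$ itself lies in $\mf{a}$, so $\z_J$ with any coordinate $\ge p$ is already $0$ in the quotient. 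Thus the $\z_I$, $I\in\Lambda$, span.

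For linear independence, suppose $\sum_{I\in\Lambda}\bar\alpha_I\,\z_I(\bx) = 0$ in $\Int(\Z_p^X)$ with $\bar\alpha_I\in\Z_p$; lift to $\sum_I \alpha_I\z_I \in \mf{a}+pB$ with $\alpha_I\in\Z$. I would argue this is impossible unless all $\alpha_I\equiv 0\pmod p$ by evaluation: for any function $\ba\colon X\to\Z_p$, every element of $\mf{a}$ evaluates to $0$ (since $\z_n$ of anything in $\Z_p$ vanishes for $n\ge p$) and every element of $pB$ evaluates into $p\Z_p=0$, so $\sum_I\bar\alpha_I\z_I(\ba)=0$ in $\Z_p$ for all such $\ba$. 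Restricting $\ba$ to a finite set $\bx$ containing the relevant supports, this says a polynomial of degree $<p$ in each variable over $\Z_p$ vanishes on all of $\Z_p^{\bx}$; by the one-variable fact that a polynomial of degree $<p$ vanishing on all of $\Z_p$ is zero, combined with a routine multivariable induction, all $\bar\alpha_I=0$.

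**Main obstacle.** The routine bookkeeping is the descent of the $\z$-maps and the basis-rewriting in the first two paragraphs, but the genuinely delicate point is the linear-independence argument: one must be careful that ``evaluation'' is being used correctly over $\Z_p$ --- specifically that the $\z_I(\bx)$, viewed as actual functions $\Z_p^{\bx}\to\Z_p$, are linearly independent, which is the assertion that the natural map from $\Int(\Z_p^X)$ to the ring of all functions $\Z_p^X\to\Z_p$ (or rather to $\varinjlim\Map(\Z_p^{\bx},\Z_p)$) is injective on the span of the $\z_I$. This is exactly the mod-$p$ analogue of the argument used in the proof of Theorem~\ref{thm:cup1-t} to show $\varphi$ is a monomorphism, and I expect it to go through with the same flavor of argument (finite-support reduction plus the one-variable vanishing lemma over a field), so the obstacle is one of care rather than of a missing idea.
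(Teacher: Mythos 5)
Your overall structure is sound, but there is a genuine error in the first half. You claim that, after expressing $\z_n(\bar a)$ via \eqref{bin1-z}--\eqref{bin3-z} as a combination of products of the $\z_m(x)$, ``the remaining small terms carry coefficients divisible by $p$ whenever $n\ge p$,'' and you conclude that ``the class of $\z_n(\bar a)$ itself vanishes'' in $\Int(\Z_p^X)$. This is false: for $\bar a = x+y$ one has $\z_p(x+y)=\sum_{i+j=p}\z_i(x)\z_j(y)$, and modulo the ideal $\mf{a}$ this equals $\sum_{i=1}^{p-1}\z_i(x)\z_{p-i}(y)$, a sum of distinct basis elements with coefficient $1$ --- a nonzero element of $\Int(\Z_p^X)$ (for $p=2$ it is just $xy$). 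Fortunately the statement you actually need is only that the falling factorial $a(a-1)\cdots(a-n+1)=n!\,\z_n(a)$ vanishes for $n\ge p$, and this follows at once from the two facts you already have on the page: $\z_n(\bar a)$ lies in the binomial ring $\Int(\Z^X)$ for any lift $\bar a$, and $p\mid n!$ for $n\ge p$, so $n!\,\z_n(\bar a)\in p\,\Int(\Z^X)$ maps to zero. Combined with surjectivity of the projection, this is exactly the paper's (one-line) argument; you should delete the false intermediate claim and argue this way. (The fact that $\z_p$ does \emph{not} descend to zero is not a cosmetic point --- it is what makes the mod~$p$ theory stop at $\z_{p-1}$ and is responsible for the Bockstein appearing in Theorem \ref{thm:mod-p-bock}.)

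For the basis, your spanning argument agrees with the paper's, but your linear-independence argument is genuinely different. The paper uses \eqref{bin3-z} to show that $\z_i(x)\z_j(x)$ is a combination of $\z_\ell(x)$ with $\ell\ge\max\{i,j\}$, hence that the ideal generated by the $\z_i(x)$ with $i\ge p$ is precisely the span of those basis monomials $\z_J$ having some index $\ge p$; independence of the remaining $\z_I$ then falls out of Theorem \ref{thm:basis} with no further work. You instead evaluate at points and invoke the fact that a polynomial of degree $<p$ in each variable over $\Z_p$ vanishing on all of $\Z_p^{\bx}$ is zero. This works, but the well-definedness of evaluation (which you rightly flag) deserves care: an integer-valued polynomial such as $\binom{x}{p}$ does \emph{not} descend to a function on $\Z_p$, so you should fix integer representatives in $\{0,\dots,p-1\}$ once and for all, check that the resulting evaluation homomorphism kills $\mf{a}+pB$ (it does, since $\binom{a}{n}=0$ for $0\le a\le p-1<n$), and only then argue with roots of low-degree polynomials over the field $\Z_p$. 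The paper's route avoids this entirely and is shorter; yours has the mild virtue of exhibiting $\Int(\Z_p^X)$ concretely inside the ring of functions $\{0,\dots,p-1\}^{\bx}\to\Z_p$, in the same spirit as the monomorphism $\varphi$ in the proof of Theorem \ref{thm:cup1-t}.
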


\begin{proof}
To show that $\Int(\Z_p^X)$ is a $\Z_p$-binomial algebra,
note that  $a(a-1)\cdots(a-n+1) = n! \cdot \z_n(a)$ for 
$a \in \Int(\Z^X)$. For $n \ge p$, we have that $n!$ is 
a multiple of $p$. Hence, for $n \ge p$ the image of 
 $a(a-1)\cdots(a-n+1)$ is the zero element in $\Int(\Z_p^X)$.
Since the projection map $\Int(\Z^X)\to \Int(\Z_p^X)$ is an
epimorphism of (graded) rings, it follows that $\Int(\Z_p^X)$
is a $\Z_p$-binomial algebra. 

To find a basis for $\Int(\Z_p^X)$ note 
that from equation \eqref{bin3-z} it follows that
$\z_i(x)\z_j(x)$ is a linear combination of elements of the form 
$\z_\ell(x)$ with $\max\{i,j\} \le \ell \le i+j$.
Hence the ideal of $\Int(\Z^X)$ generated by the elements 
$\z_i(x)$ with $i \ge p$ is the $\Z_p$-subspace generated by 
sums of polynomials of the form 
$\prod_{x_i \in X}\z_{j_i}(x_i)$ with $j_i \ge p$ for at least
one $j_i$. The result now follows from 
the integral basis theorem, Theorem \ref{thm:basis}.
\end{proof}

\begin{theorem}
\label{thm:universal-p}
Let $A$ be a $\Z_p$-binomial algebra. There is then a  
bijection between maps of    
$\Z_p$-binomial algebras from $\Int(\Z_p^X)$ to
$A$ and set maps from $X$ to $A$.
\end{theorem}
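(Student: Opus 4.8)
The plan is to construct an explicit inverse to the restriction map $\Theta\colon \Hom_{\Z_p}(\Int(\Z_p^X),A)\to\Map(X,A)$, $f\mapsto f|_X$. Note first that, since being a $\Z_p$-binomial algebra is a \emph{property} of objects, a map of $\Z_p$-binomial algebras is simply a unital $\Z_p$-algebra homomorphism; moreover, for $1\le n\le p-1$ the operation $\z_n$ is on any $\Z_p$-algebra given by the fixed $\Z_p$-polynomial $t(t-1)\cdots(t-n+1)/n!$ (legitimate since $n!$ is a unit in $\Z_p$), so every $\Z_p$-algebra map automatically commutes with $\z_1,\dots,\z_{p-1}$, and there is no extra structure to preserve. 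For injectivity of $\Theta$, I would use Lemma \ref{lem:basis-p}: the ring $\Int(\Z_p^X)$ is $\Z_p$-spanned by the products $\z_I(\bx)=\prod_k\z_{I(x_k)}(x_k)$, each of which is a product of elements $\z_i(x)$ with $1\le i\le p-1$, and each $\z_i(x)$ is a $\Z_p$-polynomial in $x=\z_1(x)$. Hence $\Int(\Z_p^X)$ is generated as a $\Z_p$-algebra by the image of $X$, so a $\Z_p$-algebra map out of it is determined by its restriction to $X$.

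For surjectivity, given $\phi\colon X\to A$ I would define $\tilde\phi\colon\Int(\Z_p^X)\to A$ on the basis of Lemma \ref{lem:basis-p} by $\tilde\phi\bigl(\z_I(\bx)\bigr)=\prod_k\z_{I(x_k)}(\phi(x_k))$, using the operations $\z_0=1,\z_1,\dots,\z_{p-1}$ of $A$ (this is well-defined and independent of the ambient finite set $\bx\supseteq\supp(I)$ since $\z_0(\phi(x))=1$), and extend $\Z_p$-linearly. Then $\tilde\phi(1)=1$ and $\tilde\phi(x)=\z_1(\phi(x))=\phi(x)$ for $x\in X$, so $\Theta(\tilde\phi)=\phi$; and once $\tilde\phi$ is known to be a $\Z_p$-algebra map, the injectivity above forces it to be the unique lift of $\phi$, so $\Theta$ is a bijection. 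To see that $\tilde\phi$ is multiplicative it suffices to check it on products of basis elements: such a product, after regrouping by variable, is a product over finitely many variables $w$ of factors $\z_i(w)\z_j(w)$ (with $\z_0=1$), and the expansion of each factor $\z_i(w)\z_j(w)$ into the basis of $\Int(\Z_p^X)$ is dictated by relation \eqref{bin3-z} reduced modulo $p$ together with the relations $\z_\ell(w)=0$ for $\ell\ge p$. Applying $\tilde\phi$ termwise and invoking the corresponding identity in $A$ (established below) then shows that $\tilde\phi$ carries the product of basis elements to the product of their images.

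The main obstacle, and the one genuinely new ingredient, will be establishing that truncated form of \eqref{bin3-z} in an arbitrary $\Z_p$-binomial algebra: namely, for $a\in A$ and $1\le m,n\le p-1$,
\begin{equation*}
\z_m(a)\z_n(a)=\sum_{\substack{0\le k\le n\\ m+k\le p-1}}\binom{m+k}{n}\binom{n}{k}\z_{m+k}(a).
\end{equation*}
The definition of a $\Z_p$-binomial algebra supplies only the vanishing $P_\ell(a)\coloneqq a(a-1)\cdots(a-\ell+1)=0$ for $\ell\ge p$, so I would derive the displayed relation by reducing modulo $p$ the classical integral identity $\binom{t}{m}\binom{t}{n}=\sum_{k=0}^{n}\binom{m+k}{n}\binom{n}{k}\binom{t}{m+k}$ in $\Q[t]$ (which is \eqref{bin3-z} read in the binomial ring $\Int(\Z^{\{x\}})$). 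The key bookkeeping point is that the coefficient $\binom{m+k}{n}\binom{n}{k}/(m+k)!$ of $P_{m+k}(t)$ in the $k$-th summand simplifies to $1/\bigl(k!\,(n-k)!\,(m+k-n)!\bigr)$, whose denominator involves only factors $\le p-1$ and is therefore a $p$-adic unit --- even when $m+k\ge p$ (the few terms where $m+k-n$ would be negative have $\binom{m+k}{n}=0$ and are absent). Hence the identity, written with the $P_\ell$'s, has $p$-integral coefficients and descends to an identity in $\Z_p[t]$; substituting $t=a$, the summands with $m+k\ge p$ vanish because $P_{m+k}(a)=0$, and what remains is exactly the displayed relation. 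I expect the only other things to watch are the innocuous compatibility of the regrouping with the basis of Lemma \ref{lem:basis-p} and unitality, both routine.
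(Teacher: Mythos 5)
Your proposal is correct, and it takes a genuinely different route from the paper's. The paper's proof rests on a presentation claim: $\Int(\Z_p^X)$ is identified with the ring of polynomials in the variables $X$ over $\Z_p$ (without constant term) modulo the ideal generated by all products $a(a-1)\cdots(a-p+1)$; since a $\Z_p$-binomial algebra satisfies precisely these relations, a set map $X\to A$ extends uniquely to a multiplicative map of the quotient, and compatibility with the $\z_n$ comes for free because $n!$ is a unit in $\Z_p$ for $0<n<p$. The truncated form of \eqref{bin3-z} in an arbitrary $\Z_p$-binomial algebra is then obtained afterwards, in Corollary \ref{lem:binom-alg-p}, as a \emph{consequence} of the theorem. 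You invert this logical order: you first establish the truncated \eqref{bin3-z} directly from the defining vanishing $P_\ell(a)=0$ for $\ell\ge p$, by rewriting the classical identity $\binom{t}{m}\binom{t}{n}=\sum_k\binom{m+k}{n}\binom{n}{k}\binom{t}{m+k}$ in terms of the $P_\ell$'s and observing that the coefficients $1/\bigl(k!\,(n-k)!\,(m+k-n)!\bigr)$ involve only factorials of integers below $p$ and are therefore $p$-integral (with the terms where $m+k<n$ vanishing outright); you then use this identity as the structure constants to check multiplicativity of the basis-defined extension $\tilde\phi$. What your route buys is self-containedness: the paper's presentation of $\Int(\Z_p^X)$ as a polynomial ring modulo the relations $P_p(a)=0$ is asserted rather than argued, whereas your argument isolates exactly which identity must hold in $A$ and proves it from the definition. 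The cost is the extra Vandermonde-type bookkeeping, but your handling of it --- in particular disposing of the summands with $m+k\ge p$ via $P_{m+k}(a)=0$ rather than via any vanishing of coefficients --- is exactly right.
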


\begin{proof}
From the definition of $\Int(\Z_p^X)$ and Lemma \ref{lem:basis-p}, it follows that
$\Int(\Z_p^X)$ is the ring of finite sums of products of integer powers of the variables 
$x \in X$ with coefficients in $\Z_p$ without constant term, modulo 
the ideal generated by products of the form $a(a-1)\cdots (a- p + 1)$.
Hence, a map of sets $\phi \colon X \to A$ extends uniquely
to a multiplicative map, $\tilde\phi\colon \Int(\Z_p^X) \to A$.
Since $n!$ is a unit in $\Z_p$ for $0 < n <p$, it follows
that $\tilde\phi$ commutes with the zeta maps, and the proof
is complete.
\end{proof}

\begin{lemma}
\label{lem:binom-p}
The equations \eqref{bin1-z} through \eqref{bin5-z} hold in $\Int(\Z_p^X)$, 
where $\z_i(a)$ is defined only for $0< i <p$ and the binomials in \eqref{bin3-z} are 
reduced mod $p$.
\end{lemma}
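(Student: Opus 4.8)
The plan is to transport the five identities from the free binomial ring $\Int(\Z^X)$, where they hold because $\Int(\Z^X)$ is a binomial ring (see \S\ref{subsec:binomial} and \S\ref{subsec:closure}), along the canonical surjection $\pi\colon \Int(\Z^X) \to \Int(\Z_p^X)$, which by construction is a homomorphism of commutative unital rings: it is the composite of the quotient by the ideal generated by the $\z_n(x)$ with $n\ge p$ and the base change $-\otimes_{\Z}\Z_p$. The one preliminary fact I need is that $\pi$ intertwines the zeta maps in the admissible range: for every $c\in\Int(\Z^X)$ and every $n$ with $0\le n\le p-1$,
\[
\pi\big(\z_n(c)\big) = \z_n\big(\pi(c)\big) .
\]
This holds because $n!\,\z_n(c) = c(c-1)\cdots(c-n+1)$ in $\Int(\Z^X)$; applying the ring map $\pi$ and then dividing by $n!$, a unit in $\Z_p$ for $0<n<p$, gives the claim (with the $n=0$ case being $\z_0\equiv 1$).

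With this in hand, \eqref{bin5-z} is immediate, since $\z_0=1$ and $\z_1=\id$ by definition. For \eqref{bin1-z}, \eqref{bin2-z}, and \eqref{bin4-z} I would lift the given elements of $\Int(\Z_p^X)$ along $\pi$ to elements of $\Int(\Z^X)$, apply the corresponding identity there, and push the result forward by $\pi$, invoking the preliminary compatibility term by term. The only point to check is that, for $0\le n\le p-1$, every zeta index occurring in these three identities again lies in $[0,p-1]$: in \eqref{bin1-z} one has $i+j=n$; in \eqref{bin2-z} one has $m\le n$ together with $q_1+\cdots+q_m=n$ and each $q_i\ge 1$; and in \eqref{bin4-z} only $2\le n\le p-1$ is considered. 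Hence no undefined operation is ever invoked, and these identities descend verbatim.

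The substantive case is \eqref{bin3-z}, and this is where the main obstacle lies. Pushing the identity $\z_m(c)\z_n(c) = \sum_{k=0}^{n}\binom{m+k}{n}\binom{n}{k}\z_{m+k}(c)$ forward by $\pi$ produces, on the right-hand side, terms $\z_{m+k}(\pi(c))$ with $m+k$ possibly $\ge p$, for which the zeta operation is not defined. The claim I would prove is that every such term is killed by its coefficient: for $0\le m\le p-1$ and $0\le k\le n\le p-1$ with $m+k\ge p$, one has $\binom{m+k}{n}\equiv 0\pmod{p}$. Indeed, writing $m+k=p+r$ with $r=m+k-p$, the bounds $m\le p-1$ and $k\le n$ force $0\le r<n$, so Lucas' theorem gives $\binom{m+k}{n}\equiv\binom{1}{0}\binom{r}{n}=\binom{r}{n}=0\pmod{p}$. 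Thus, after reduction mod $p$, the right-hand side of \eqref{bin3-z} involves only zeta operations with index in $[0,p-1]$, to which the preliminary compatibility applies, and its remaining coefficients are exactly the mod-$p$ reductions of $\binom{m+k}{n}\binom{n}{k}$. I would record the Lucas-theorem estimate as a short arithmetic observation, since it is the only non-formal ingredient — everything else is bookkeeping of admissible indices and naturality of $\pi$.
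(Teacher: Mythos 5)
Your proposal is correct and follows essentially the same route as the paper, which simply observes that the projection $\Int(\Z^X)\to\Int(\Z_p^X)$ is a ring map commuting with the $\z_i$ for $0<i<p$ and lets the five identities descend. The one substantive point you add beyond the paper's one-line argument is the verification, via Lucas' theorem, that in \eqref{bin3-z} every term $\z_{m+k}$ with $m+k\ge p$ is annihilated by its coefficient $\binom{m+k}{n}\equiv 0\pmod p$; this is exactly the detail needed to make sense of ``the binomials in \eqref{bin3-z} are reduced mod $p$,'' and your index bookkeeping ($r=m+k-p<n$, hence $\binom{r}{n}=0$) is accurate.
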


\begin{proof}
The result follows since the projection $\Int(\Z^X) \to
\Int(\Z_p^X)$ is a map of rings that commutes with the
$\z_i$ maps for $0< i <p$.
\end{proof}

\begin{corollary}
\label{lem:binom-alg-p}
For a $\Z_p$-binomial algebra $A$, equations \eqref{bin1-z} 
through \eqref{bin5-z} hold in $A$, where $\z_i(a)$ is defined 
only for $0< i <p$ and the binomials in \eqref{bin3-z} are 
reduced mod $p$.
\end{corollary}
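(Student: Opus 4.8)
The plan is to reduce the claim to Lemma \ref{lem:binom-p} by using the universal property established in Theorem \ref{thm:universal-p}. The key observation is that each of the identities \eqref{bin1-z}--\eqref{bin5-z} is a \emph{universal} polynomial identity: it asserts that two specific elements, each built by applying the operations $\z_i$ (for $0<i<p$), addition, and multiplication to a fixed finite list of ``generic'' inputs, are equal. Thus it suffices to verify each identity after specializing to the free object on the appropriate number of generators, namely $\Int(\Z_p^X)$ with $X$ a finite set of the right cardinality.

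First I would fix the identity to be checked and let $X=\{x_1,\dots,x_m\}$ be a set with one element for each ``generic'' argument occurring in it (so $m=2$ for \eqref{bin1-z}, $m=1$ for \eqref{bin3-z}, \eqref{bin4-z}, \eqref{bin5-z}, and again $m=2$ for \eqref{bin2-z}). Given arbitrary elements $a_1,\dots,a_m\in A$, define a set map $\phi\colon X\to A$ by $\phi(x_k)=a_k$; by Theorem \ref{thm:universal-p} this extends uniquely to a map of $\Z_p$-binomial algebras $\tilde\phi\colon \Int(\Z_p^X)\to A$. Because $\tilde\phi$ is multiplicative, $\Z_p$-linear, and commutes with all the operations $\z_i$ for $0<i<p$, it carries the left-hand side of the chosen identity, evaluated at the generators $x_k\in\Int(\Z_p^X)$, to the left-hand side evaluated at the $a_k\in A$, and similarly for the right-hand side. (Here one should note that the binomial coefficients $\binom{m+k}{n}$, $\binom{n}{k}$ appearing in \eqref{bin3-z} are elements of $\Z_p$ obtained by reduction mod $p$, and these are preserved by $\tilde\phi$ since $\tilde\phi$ is a $\Z_p$-algebra map.) By Lemma \ref{lem:binom-p} the two sides agree in $\Int(\Z_p^X)$, hence their images agree in $A$; since $a_1,\dots,a_m$ were arbitrary, the identity holds throughout $A$.

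The one point requiring a little care is that the operations $\z_i$ ($0<i<p$) genuinely make sense on $A$ in the first place and that ``commuting with the zeta maps'' is the correct compatibility for a $\Z_p$-binomial algebra map: this is exactly what the hypothesis of Definition \ref{def:zp-binomial algebra} together with the invertibility of $n!$ in $\Z_p$ for $0<n<p$ guarantees, and it is built into the statement of Theorem \ref{thm:universal-p}. I expect the only genuine (and still minor) obstacle to be bookkeeping in \eqref{bin2-z}, where the inner sum ranges over compositions $q_1+\cdots+q_m=n$ with all $q_i\ge 1$; one must observe that, because each such $q_i$ satisfies $1\le q_i\le n\le p-1$, every $\z_{q_i}$ occurring is among the defined operations, so the entire expression is legitimately in the image of $\tilde\phi$. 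With that remark in place, the corollary follows immediately from Lemma \ref{lem:binom-p} and Theorem \ref{thm:universal-p}.
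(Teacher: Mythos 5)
Your argument is correct and is exactly the paper's proof: the paper disposes of this corollary in one line as "an immediate consequence of Theorem \ref{thm:universal-p} and Lemma \ref{lem:binom-p}," which is precisely the reduction you carry out. You have simply spelled out the details (choosing a finite set of generators, extending the set map to a $\Z_p$-binomial algebra map, and transporting each identity from $\Int(\Z_p^X)$ to $A$) that the paper leaves implicit.
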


\begin{proof}
This is an immediate consequence of Theorem 
\ref{thm:universal-p} and Lemma \ref{lem:binom-p}.
\end{proof}

\subsection{$\Z_p$-Binomial $\cup_1$-dgas}
\label{subsec:zp-bin-cup1}

We now adjust the notion introduced in 
Definition \ref{def:binomial-cup-one-algebra} 
to fit this context.

\begin{definition}
\label{def:zp-bin-cup1-alg}
A differential graded algebra, $(A,d)$, over $\Z_p$ is called a
\emph{$\Z_p$-binomial $\cup_1$-dga}\/ if the following conditions 
are satisfied.
\begin{enumerate}
\item \label{zp1} 
$A$ is a graded $\Z_p$-algebra with cup-one products.
\item \label{zp2} 
The differential $d$ satisfies the $\cupd$ formula \eqref{eq:c1d}.
\item \label{zp3} 
The $\Z_p$-vector subspace 
$\Z_p \oplus A^1\subset A^{\le 1}$, with multiplication on $A^1$ given
by the cup-one product, is a $\Z_p$-binomial algebra.
\end{enumerate}
\end{definition}

\begin{example}
\label{ex:cochain-cup1-zp}
Let $X$ be a $\Delta$-complex. 
Using the result in Example \ref{ex:cochains-zp}, 
it is readily verified 
that the cochain algebra $\Z_p \oplus C^{\ge 1}(X;\Z_p)$ 
is a $\Z_p$-binomial $\cup_1$-dga, where
the $\z$ maps,
$\zeta^X_n\colon C^{1}(X;\Z_p)\to C^{1}(X;\Z_p)$, 
are defined by setting 
\begin{equation*}
\zeta^X_n(f)(e) \coloneqq 
\frac{f(e)(f(e)-1)\cdots(f(e)-n+1)}{n!}
\end{equation*} 
for each integer $1 \le n \le p-1$, 
for each $1$-cochain $f \in C^1(X;\Z_p)= \Hom(C_1(X;\Z_p),\Z_p)$, 
and for each $1$-simplex $e$ in $X$. 
\end{example}

\begin{theorem}
\label{thm:cup-zeta-p}
Let $A$ be a $\Z_p$-binomial $\cup_1$-dga.  
Then, for each $a\in Z^1(A)$ and 
each integer $k$ with $1 \le k \le p-1$, we have
\begin{equation}
\label{eq:da-cup-p}
d\z_k (a) = - \sum_{\ell = 1}^{k-1}\z_\ell(a) \cup \z_{k-\ell}(a).
\end{equation}
More generally, if $I=(k_1,\dots, k_n)$ with $1\le k_i\le p-1$ 
and $\ba=(a_1, \ldots , a_n)$ with  $a_i\in Z^1(A)$, then
\begin{equation}
\label{eq:daa-cup-p}
d(\z_I(\ba)) = - \sum_{\substack{I_1 + I_2=I \\ I_j\ne \bz}} 
\z_{I_1}(\ba) \cup \z_{I_2}(\ba) .
\end{equation}
\end{theorem}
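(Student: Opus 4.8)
The plan is to mimic, essentially verbatim, the two nested inductions that prove the integral statement Theorem~\ref{thm:cup-zeta}. The only genuinely new point is bookkeeping: in a $\Z_p$-binomial $\cup_1$-dga the operations $\z_n$ exist only for $1\le n\le p-1$, so one must verify that every $\z_m$ produced in the course of the computation has index in this admissible range, and that every division performed is by an integer that is a unit modulo $p$. No conceptual difficulty beyond this appears, precisely because the hypotheses bound the indices $k$ (resp.\ $k_i$) by $p-1$, unlike in the integral case where no such bound is needed.

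First I would record the $\Z_p$ analogue of Lemma~\ref{lem:3-properties}. The left Hirsch identity \eqref{eq:hirsch-1c} holds because $A$ is a graded $\Z_p$-algebra with cup-one products; the ``half $\cupd$'' formula \eqref{eq:halfcup1d}, valid when $da\in D^2(A)$ and $db=0$, is just the $\cupd$ formula \eqref{eq:c1d} specialized to $db=0$, which holds by condition \ref{zp2} of Definition~\ref{def:zp-bin-cup1-alg}; and the recursion $\z_{n+1}(a)=\tfrac{1}{n+1}\bigl(\z_n(a)\cup_1 a-n\,\z_n(a)\bigr)$ is equation \eqref{bin3-z} (reduced mod $p$) via Corollary~\ref{lem:binom-alg-p}, meaningful exactly for $1\le n\le p-2$ since then $n+1$ is invertible in $\Z_p$.

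Then for \eqref{eq:da-cup-p} I would induct on $k$, $1\le k\le p-1$. The case $k=1$ is vacuous as $da=0$ (so the statement is empty when $p=2$), and the case $k=2$, relevant only for odd $p$ where $2$ is a unit, is the $\Z_p$ version of Lemma~\ref{lem:good}: $\z_2(a)=\tfrac12(a\cup_1 a-a)$ and $d(a\cup_1 a)=-2\,a\cup a$ by \eqref{eq:halfcup1d}. For the inductive step from $k$ to $k+1$ with $k+1\le p-1$, I would reproduce the displayed chain of equalities in the proof of Theorem~\ref{thm:cup-zeta} line by line, using only the three identities above together with the inductive hypothesis; the only indices of $\z$ that ever appear are $\ell$, $k-\ell$, $\ell+1$, $k-\ell+1$, $k$, $k+1$ with $1\le\ell\le k-1$, all lying in $\{1,\dots,p-1\}$, and the only scalar inverted is $k+1$, a unit mod $p$. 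The essential observation is that no $\z_{k+2}$ or higher is ever produced, so the induction never escapes the admissible range; combined with the base cases $k=1,2$ this covers all of $1\le k\le p-1$.

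For the general formula \eqref{eq:daa-cup-p} I would run the same double induction as in the proof of Theorem~\ref{thm:cup-zeta}: an outer induction on the number of factors $n$, with base case $n=1$ being \eqref{eq:da-cup-p} just established, and an inner induction on $\ell=k_n$. The computation establishing \eqref{eq:daa-cup}—including the side remark that $d\bigl(\z_{k_1}(a_1)\cup_1\cdots\cup_1\z_{k_n}(a_n)\bigr)\in D^2(A)$, which is what allows \eqref{eq:halfcup1d} to be applied—carries over without change; one only checks that all the $k_i$ and all auxiliary indices remain in $\{1,\dots,p-1\}$ and that the divisions, always by some $\ell+1\le p-1$, are by units. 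The main (and only) obstacle is this closure-under-the-range verification, which is precisely why the hypothesis demands $k_i\le p-1$. An alternative, available once the free $\Z_p$-binomial $\cup_1$-dga has been constructed, is to deduce the identity from its counterpart there by functoriality; but the direct induction sketched above is self-contained.
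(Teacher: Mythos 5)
Your proposal is correct and matches the paper's proof, which consists of the single sentence that the argument ``follows the same steps as in the proof of Theorem~\ref{thm:cup-zeta} for $1\le k\le p-1$''; you have simply made explicit the range-of-indices and unit-denominator checks that the paper leaves implicit. No further comment is needed.
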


\begin{proof}
The proof follows the same steps as in the proof of 
Theorem \ref{thm:cup-zeta} for $1 \le k \le p-1$.
\end{proof}

\begin{remark}
\label{rem:p=2}
For $p=2$, equation \eqref{eq:da-cup-p} is vacuously true, 
since $d\zeta_1(a)=0$ by assumption. 
On the other hand, equation \eqref{eq:daa-cup-p} is true, 
but not tautologically so. For instance, as a consequence 
of formula \eqref{eq:halfcup1d}, the identity 
$d(\z_1(a_1)\z_1(a_2)) = - a_1 \cup a_2 - a_2 \cup a_1$ 
holds over $\Z_2$.
\end{remark}

\subsection{The free $\Z_p$-binomial graded algebra}
\label{subsec:free-zp}
In this section we define the free $\Z_p$-bino\-mial graded algebra,
$\T(X;\Z_p)$, generated by a set $X$ and show
that $\T(X;\Z_p)$ has properties analogous to those of
$\T(X)$. For the rest of this section, we will abbreviate 
$\otimes_{\Z_p}$ by $\otimes$.

\begin{definition}
\label{def:Tzp}
For a set $X$ and prime $p$ the 
\emph{free $\Z_p$-binomial graded algebra}, denoted 
$\T(X;\Z_p)$, is the free non-commutative algebra 
over $\Z_p$ generated by $\Int(\Z_p^X)$.
\end{definition}

Let $\T=\T(X;\Z_p)$.  The $\Z_p$-vector space $\T^1$ comes endowed 
with a cup-one product map, $\T^1 \otimes \T^1 \to \T^1$,
$a \otimes b \mapsto ab=a \cup_1 b$.
As in section \ref{subsec:free-bin-alg}, 
we use the cup-one product to define 
a map $\T^2 \otimes \T^1 \to \T^2$ by
\begin{equation}
\label{eq:abc-otimes}
(a\otimes b) \otimes c \mapsto ac \otimes b + a \otimes bc.
\end{equation}
For the terms in the $\cupd$ formula to be defined, we define
the map $\circ\colon \T^2 \otimes \T^2 \to \T^2$ by
\begin{equation}
\label{eq:a1a2-cup}
(a_1 \cup a_2) \otimes (b_1 \cup b_2) \mapsto
(a_1 \cup_1 b_1) \cup (a_2 \cup_1 b_2).
\end{equation}
With this structure, $\T(X;\Z_p)$ is a graded $\Z_p$-algebra with
cup-one products.

The following results are analogous to those in 
section \ref{sect:free-bin-dga}. Moreover, in each case the
proof follows the same steps as for the corresponding result
over $\Z$.

\begin{theorem}
\label{thm:cup1-t-zp}
For any set $X$, the algebra $\T=\T(X;\Z_p)$ is a $\Z_p$-binomial $\cup_1$-dga, 
with differential $d_{\T}$ satisfying $d_{\T}(x) = 0$ for all $x \in X$.
\end{theorem}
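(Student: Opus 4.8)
The plan is to mirror almost verbatim the proof of Theorem \ref{thm:cup1-t}, replacing the ring $\Z$ by $\Z_p$ throughout and invoking the $\Z_p$-analogues of the auxiliary results established in section \ref{sect:zp-binomial}. First I would set $d_{\T}(x) = 0$ for all $x \in X$, and then, for each basis element $\zeta_I(\bx)$ of $\T^1(X;\Z_p)$ with $\supp(I) = \bx$ and $I\colon X \to \{0,\dots,p-1\}$, declare
\begin{equation*}
d_{\T}(\zeta_I(\bx)) = - \sum_{\substack{I_1 + I_2 = I \\ I_j \ne \bz}} \zeta_{I_1}(\bx) \ot \zeta_{I_2}(\bx).
\end{equation*}
By Lemma \ref{lem:basis-p} these $\zeta_I(\bx)$ form a $\Z_p$-basis of $\T^1$, so this formula defines a well-defined $\Z_p$-linear map $d_{\T}\colon \T^1 \to \T^2$; here one must note that for $I_1 + I_2 = I$ with each entry of $I$ at most $p-1$, the entries of $I_1$ and $I_2$ are also at most $p-1$, so every term on the right-hand side lies in the span of the prescribed basis and no ``overflow'' into the killed ideal occurs.

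Next I would verify that $d_{\T}$ satisfies the $\cupd$ formula \eqref{eq:c1d} by the same embedding trick. Define the $2$-dimensional $\Delta$-complex $\Delta(X)$ exactly as before, but now with $1$-simplices being functions $\ba\colon X \to \Z_p$ and with the cochain algebra $C(X) \coloneqq (C^\bullet(\Delta(X);\Z_p), d_\Delta)$ taken over $\Z_p$. Construct the degree-preserving $\Z_p$-linear map $\varphi\colon \T^{\le 2}(X;\Z_p) \to C(X)$ sending $\zeta_I$ to the $1$-cochain $\ba \mapsto \zeta_I(\ba)$ and $\zeta_I \otimes \zeta_J$ to the $2$-cochain $(\ba,\ba') \mapsto \zeta_I(\ba)\cdot \zeta_J(\ba')$. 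As before, $\varphi$ commutes with cup, cup-one, and $\circ$; it commutes with the differentials by comparing \eqref{eq:dt-bx} (in its $\Z_p$ form) with \eqref{eq:daa-cup-p}, using Theorem \ref{thm:cup-zeta-p} in place of Theorem \ref{thm:cup-zeta}. The injectivity of $\varphi$ follows from Lemma \ref{lem:basis-p}: if $\sum \alpha_I \zeta_I$ vanishes on all $\ba\colon X\to\Z_p$ it is the zero element of $\Int(\Z_p^X)$, hence each $\alpha_I = 0$; and the degree-$2$ case uses a disjoint second copy $X'$ and the identity $\zeta_I \cdot \zeta_{J'} = \zeta_K$ for indexing functions with disjoint support, again invoking the $\Z_p$-basis theorem. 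Since by Example \ref{ex:cochain-cup1-zp} the cochain differential $d_\Delta$ satisfies the $\cupd$ formula, and $\varphi$ is an injective morphism intertwining all the relevant structure, $d_{\T}$ satisfies \eqref{eq:c1d} as well. Extending $d_{\T}$ to all of $\T$ by the graded Leibniz rule and checking $d_{\T}^2 = 0$ on $\T^1$ is the same telescoping cancellation of the sum over triples $J_1 + J_2 + J_3 = I$ as in the integral case.

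The one point requiring genuine care — the step I expect to be the main obstacle — is making sure the passage to the quotient by the ideal generated by the $\zeta_n(x)$ with $n \ge p$ is compatible with everything: one needs that $d_{\T}$, the cup-one product, and the $\circ$ map all descend from $\T(X)$ to $\T(X;\Z_p)$ (equivalently, that the defining ideal is a differential ideal closed under these operations). This is controlled by the observation in Lemma \ref{lem:basis-p} that $\zeta_i(x)\zeta_j(x)$ is a $\Z_p$-combination of $\zeta_\ell(x)$ with $\max\{i,j\}\le \ell\le i+j$, so the bad ideal is spanned by products with at least one index $\ge p$, and by Theorem \ref{thm:cup-zeta-p} the differential of such a generator is again a sum of such products. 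Once this closure is in place, the rest of the argument is a routine transcription, and one can simply remark, as the paper does for Theorems \ref{thm:cup-zeta-p}, that ``the proof follows the same steps as in the proof of Theorem \ref{thm:cup1-t}.''
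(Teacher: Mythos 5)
Your proposal is correct and follows the same route as the paper, which disposes of Theorem \ref{thm:cup1-t-zp} by simply asserting that "the proof follows the same steps as for the corresponding result over $\Z$"; your transcription, substituting Lemma \ref{lem:basis-p}, Theorem \ref{thm:cup-zeta-p}, and Example \ref{ex:cochain-cup1-zp} for their integral counterparts, is exactly what is intended. Two minor remarks: the closing paragraph about the ideal being a differential ideal is not actually needed (and its justification via Theorem \ref{thm:cup-zeta-p} is shaky, since that theorem only covers indices $k\le p-1$), because $\T(X;\Z_p)$ is built directly on the quotient ring $\Int(\Z_p^X)$ and $d_{\T}$ is defined directly on the $\Z_p$-basis of Lemma \ref{lem:basis-p}; and the one genuinely new check in the injectivity step is that the $\zeta_I$ with indices in $\{0,\dots,p-1\}$ remain linearly independent as \emph{functions} on $\Z_p^X$ (which holds by a triangularity argument on the matrix $\binom{a}{n}$), since over $\Z_p$, unlike over $\Z$, a nonzero polynomial may vanish identically as a function.
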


\begin{lemma}
\label{lem:TtoA-p}
Let $X$ be a set, let $A$ be a $\Z_p$-binomial graded algebra, 
and let $\phi \colon X \to A^1$ be a map of sets. 
There is then a unique extension of $\phi$ to a map 
$f \colon \T(X;\Z_p) \to A$ of $\Z_p$-binomial graded algebras.
\end{lemma}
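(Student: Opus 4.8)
\textbf{Proof plan for Lemma~\ref{lem:TtoA-p}.}
The plan is to mimic the proof of the integral analogue, Lemma~\ref{lem:TtoA}, replacing $\Int(\Z^X)$ by $\Int(\Z_p^X)$ and the universality of free binomial rings (Corollary~\ref{lem:extend-map}) by its characteristic-$p$ counterpart, Theorem~\ref{thm:universal-p}. First I would recall that, by Definition~\ref{def:Tzp}, $\T(X;\Z_p)$ is the free non-commutative $\Z_p$-algebra on $\Int(\Z_p^X)$, so that $\T^0(X;\Z_p)=\Z_p$ and $\T^1(X;\Z_p)$ is the ideal of polynomials in $\Int(\Z_p^X)$ without constant term; consequently $\T^{\le 1}(X;\Z_p)=\Z_p\oplus\T^1(X;\Z_p)$ is, as a ring under the cup-one product, isomorphic to $\Int(\Z_p^X)$. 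On the target side, condition~\ref{zp3} of Definition~\ref{def:zp-bin-cup1-alg} (or rather the definition of a $\Z_p$-binomial graded algebra) gives that $\Z_p\oplus A^1$, with multiplication on $A^1$ given by the cup-one product, is a $\Z_p$-binomial algebra.

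The key steps, in order, are: (1) given the set map $\phi\colon X\to A^1$, apply Theorem~\ref{thm:universal-p} to the $\Z_p$-binomial algebra $\Z_p\oplus A^1$ to obtain a unique map of $\Z_p$-binomial algebras $\tilde\phi\colon \Int(\Z_p^X)\to \Z_p\oplus A^1$ extending $\phi$ (this map is the identity in degree $0$, since it sends $1$ to $1$); (2) identify $\Int(\Z_p^X)$ with $\Z_p\oplus\T^1(X;\Z_p)$ and restrict $\tilde\phi$ to degree $1$, yielding a $\Z_p$-linear map $f\colon \T^1(X;\Z_p)\to A^1$ compatible with the cup-one products and the $\z_i$ maps for $0<i<p$; (3) since $\T(X;\Z_p)$ is the free graded $\Z_p$-algebra generated by $\T^1(X;\Z_p)$, extend $f$ uniquely multiplicatively to all of $\T(X;\Z_p)\to A$; (4) verify that this extension respects the cup-one products $\T^1\otimes\T^1\to\T^1$, the derived cup-one map $\T^2\otimes\T^1\to\T^2$ of \eqref{eq:abc-otimes}, and the $\circ$ map of \eqref{eq:a1a2-cup} — each of these follows formally because the structure maps on $\T$ are defined by exactly the formulas that these maps on $A$ satisfy; (5) conclude uniqueness from the uniqueness in Theorem~\ref{thm:universal-p} together with the fact that any map of graded algebras out of a free graded algebra is determined by its restriction to the generating submodule $\T^1$.

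The main obstacle is really just step~(1): one must be slightly careful that Theorem~\ref{thm:universal-p} applies to $\Z_p\oplus A^1$ rather than to $A^0\oplus A^1$, i.e.\ that replacing $A^0$ by the subring $\Z_p$ is what makes the cup-one multiplication on $\Z_p\oplus A^1$ commutative and hence $\Z_p$-binomial — exactly the phenomenon flagged in Remark~\ref{rem:comm-cup1-ring} for cochain algebras. Once that identification is in place, everything else is the same formal bookkeeping as in Lemma~\ref{lem:TtoA}, and no genuinely new computation is required; as the paragraph preceding Theorem~\ref{thm:cup1-t-zp} already notes, ``the proof follows the same steps as for the corresponding result over $\Z$.''
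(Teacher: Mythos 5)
Your proposal is correct and follows exactly the route the paper intends: the paper gives no separate proof of Lemma~\ref{lem:TtoA-p}, stating only that it ``follows the same steps as for the corresponding result over $\Z$,'' i.e.\ the proof of Lemma~\ref{lem:TtoA}, with Theorem~\ref{thm:universal-p} playing the role of Corollary~\ref{lem:extend-map}. Your identification of $\Z_p\oplus\T^1(X;\Z_p)$ with $\Int(\Z_p^X)$ and your use of freeness of the graded algebra on $\T^1$ for the multiplicative extension and uniqueness are precisely the intended argument.
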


\begin{theorem}
\label{thm:tx-a-p}
Let $(\T(X;\Z_p), d_{\T})$ be the free $\Z_p$-binomial 
$\cup_1$-dga on a set $X$, let $(A,d_A)$ 
be a $\Z_p$-binomial $\cup_1$-dga, 
and let $f\colon \T(X;\Z_p) \to A$ be a map
of graded algebras over $\Z_p$ with cup-one products. 
Then $f$ commutes with the respective differentials if and only if 
$d_A \circ f(x) = f \circ d_{\T}(x)$ for all $x \in X$.
\end{theorem}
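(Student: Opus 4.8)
The plan is to reduce the statement to its $\Z$-analogue, Theorem~\ref{thm:tx-a}, whose proof carries over verbatim once one checks that the ingredients it uses are available over $\Z_p$. First I would recall that by Theorem~\ref{thm:cup1-t-zp} the algebra $(\T(X;\Z_p),d_{\T})$ is a $\Z_p$-binomial $\cup_1$-dga with $d_{\T}(x)=0$ for all $x\in X$, so in particular $d_{\T}$ satisfies the $\cupd$ formula \eqref{eq:c1d} and the left Hirsch identity \eqref{eq:hirsch-1c} holds; likewise $(A,d_A)$ satisfies these by hypothesis. The ``only if'' direction is trivial: if $f$ commutes with $d_{\T}$ and $d_A$ then in particular $d_A\circ f(x)=f\circ d_{\T}(x)$ for each generator $x\in X$.

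For the ``if'' direction, assume $d_A\circ f(x)=f\circ d_{\T}(x)$ for all $x\in X$. The key step is to upgrade this from generators $x\in X$ to all of $\T^1(X;\Z_p)$. Here I would use that $\T^{\le 1}(X;\Z_p)=\Z_p\oplus\Int(\Z_p^X)$, which by Lemma~\ref{lem:basis-p} has $\Z_p$-basis the polynomials $\zeta_I(\bx)$ with $I\colon X\to\{0,\dots,p-1\}$; each such basis element is built from the generators $x$ by iterated cup-one products via formula \eqref{eq:zk1} (now valid for $1\le n\le p-1$, which suffices since all exponents are $<p$), so it is enough to know that $d_A\circ f$ and $f\circ d_{\T}$ agree on a product $\zeta_n(x)\cup_1 z$ whenever they agree on $z$ and on $x$. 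This in turn follows because both differentials satisfy \eqref{eq:halfcup1d}, the special case $db=0$ of the $\cupd$ formula, together with the left Hirsch identity \eqref{eq:hirsch-1c} to rewrite the terms $da\cup_1 b$ and the $\circ$-terms in terms of quantities on which induction already gives agreement; $f$ being a map of graded algebras with cup-one products intertwines all the operations $\cup$, $\cup_1$, $\circ$, and the $\zeta_n$. Concretely, one runs the same induction on $n$ in $\zeta_I(\bx)=\zeta_{i_1}(x_1)\cup_1\cdots\cup_1\zeta_{i_n}(x_n)$ as in the proof of Theorem~\ref{thm:cup-zeta-p}, but tracking the map $f$ throughout.

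Once $d_A\circ f(a)=f\circ d_{\T}(a)$ is known for all $a\in\T^1(X;\Z_p)$, the extension to all of $\T=\T(X;\Z_p)$ is immediate: $\T^n$ for $n\ge 2$ is spanned by products of elements of $\T^1$, and both $d_{\T}$ and $d_A$ obey the graded Leibniz rule \eqref{eq:derivation}, while $f$ is multiplicative; so agreement on $\T^1$ propagates to all degrees by a straightforward induction on degree. This completes the argument. The only real point requiring care---the main obstacle---is the passage from generators to all of $\T^1$: one must be sure that every operation invoked (the $\cupd$ formula, \eqref{eq:zk1}, the left Hirsch identity) is valid in the $\Z_p$-setting with the binomial operations only defined for $1\le n\le p-1$, which is exactly what Lemma~\ref{lem:basis-p}, Corollary~\ref{lem:binom-alg-p}, and Definition~\ref{def:zp-bin-cup1-alg} guarantee, so no denominators $n!$ with $n\ge p$ ever appear.
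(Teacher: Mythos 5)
Your proposal is correct and matches the paper's approach: the paper proves the $\Z_p$ version by noting that the proof of Theorem~\ref{thm:tx-a} carries over verbatim, namely by using the $\cupd$ formula, equation~\eqref{eq:zk1}, and the left Hirsch identity to pass from generators to all of $\T^1$, then the graded Leibniz rule and multiplicativity of $f$ to reach higher degrees. Your added verification that all the needed identities remain valid over $\Z_p$ with binomial operations restricted to $1\le n\le p-1$ is exactly the (implicit) content of the paper's remark that the proof "follows the same steps" as over $\Z$.
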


\begin{corollary}
\label{cor:freebindga-p}
Let $(\T(X;\Z_p), d)$ be the free $\Z_p$-binomial 
$\cup_1$-dga on a set $X$, 
with $d(x) = 0$ for all $x \in X$.  
If $A$ is a $\Z_p$-binomial $\cup_1$-dga, 
then there is a bijection between 
$\Z_p$-binomial $\cup_1$-dga maps from 
$(\T(X;\Z_p), d)$ to $(A, d_A)$ and set maps from 
$X$ to $Z^1(A)$.
\end{corollary}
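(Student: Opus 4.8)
The plan is to deduce this exactly as Corollary~\ref{cor:freebindga} was deduced from Theorem~\ref{thm:tx-a}, now using the $\Z_p$-versions of those results. First I would note that the statement is well-posed: by Theorem~\ref{thm:cup1-t-zp}, $(\T(X;\Z_p),d)$ is a $\Z_p$-binomial $\cup_1$-dga with $d(x)=0$ for all $x\in X$. I would then construct two maps between the set $\Hom_1\bigl((\T(X;\Z_p),d),(A,d_A)\bigr)$ of $\Z_p$-binomial $\cup_1$-dga morphisms and the set $\Map(X,Z^1(A))$, and check that they are mutually inverse.

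For one direction, given $\phi\colon X\to Z^1(A)\subseteq A^1$, Lemma~\ref{lem:TtoA-p} supplies a unique extension to a morphism $f\colon \T(X;\Z_p)\to A$ of $\Z_p$-binomial graded algebras with cup-one products. Since each $\phi(x)$ is a cocycle and $d(x)=0$, one has $d_A(f(x))=d_A(\phi(x))=0=f(d(x))$ for all $x\in X$, so Theorem~\ref{thm:tx-a-p} applies and $f$ commutes with the differentials; thus $f$ is a morphism of $\Z_p$-binomial $\cup_1$-dgas. For the other direction, a morphism $f\colon(\T(X;\Z_p),d)\to(A,d_A)$ restricts on the generators to a set map $X\to A^1$, and $d_A(f(x))=f(d(x))=0$ shows this restriction lands in $Z^1(A)$.

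It remains to see that the two assignments are inverse. Going from $\phi$ to $f$ and restricting back to $X$ returns $\phi$ by construction. Going from $f$ to $f|_X$ and extending again produces a morphism that agrees with $f$ on $X$; since $\Z_p\oplus\T^1(X;\Z_p)$ is the free $\Z_p$-binomial ring $\Int(\Z_p^X)$, which is generated by $X$, and $\T(X;\Z_p)$ is generated as a graded algebra by its degree-one part, the uniqueness in Lemma~\ref{lem:TtoA-p} forces this morphism to equal $f$. I do not expect any genuine obstacle here: all the real content has already been packaged into Theorem~\ref{thm:cup1-t-zp}, Lemma~\ref{lem:TtoA-p}, and Theorem~\ref{thm:tx-a-p}, and the only point worth stating explicitly is this last uniqueness step that closes the bijection.
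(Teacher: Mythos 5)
Your argument is correct and is essentially the route the paper takes: the paper states that the corollary follows at once from Theorem~\ref{thm:tx-a-p} (together with Lemma~\ref{lem:TtoA-p}), by the same steps as Corollary~\ref{cor:freebindga} over $\Z$. You have merely made explicit the two mutually inverse assignments and the uniqueness step that the paper leaves implicit, and all of those details check out.
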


\section{Massey  products in binomial $\cup_1$-dgas}
\label{sect:massey}

In this section we outline a relationship between the binomial operations 
and Massey products in a binomial $\cup_1$-dga. 

\subsection{Relating the $\z_i$ maps to Massey products}
\label{subsec:mtp-zeta}
We start with Massey products of the form $\langle u, \dots , u\rangle$, 
where $u$ is an (integral) cohomology class in degree $1$.   
We will develop this idea, in a more general context, 
in \cite{Porter-Suciu-2021-3}.

\begin{proposition}
\label{prop:mtp}
Let $A$ be a binomial $\cup_1$-dga over $\Z$ and let $u$ be any
element in $H^1(A)$. For each  integer $n\ge 3$, the $n$-tuple Massey 
product   $\langle u, \ldots , u \rangle$ is defined and contains $0$. 
\end{proposition}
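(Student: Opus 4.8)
The plan is to exhibit explicit cochains witnessing that $0$ lies in the $n$-fold Massey product $\langle u,\dots,u\rangle$, using the binomial operations $\z_i$ as the off-diagonal entries of the defining system. First I would fix a cocycle $a\in Z^1(A)$ with $[a]=u$, and propose the defining system given on the diagonal by the $a_i = a$ and on the $(i,j)$-slot (for $i<j$, $(i,j)\neq(1,n)$) by $a_{i,j} = -\z_{j-i+1}(a)$. The point of the normalization is that, by the higher-order Massey product recipe recalled in \S\ref{subsec:massey-prod}, one needs $d a_{i,j} = a_i a_{i+1,j} + a_{i,i+1}a_{i+2,j} + \cdots + a_{i,j-1}a_j$; substituting the candidate entries, the right-hand side becomes a telescoping sum of cup products of the form $\z_\ell(a)\cup\z_{m}(a)$ with $\ell+m = j-i+1$ and $\ell,m\ge 1$, which is exactly $-\,d\,\z_{j-i+1}(a)$ by Theorem \ref{thm:cup-zeta}, equation \eqref{eq:da-cup}. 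So the defining system is consistent.

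Next I would compute the representing cocycle for the product. The $n$-fold product is represented by $a_1 a_{2,n} + a_{1,2}a_{3,n} + \cdots + a_{1,n-1}a_n$. With the entries above this is
\begin{equation*}
a\cdot(-\z_{n-1}(a)) + (-\z_2(a))(-\z_{n-2}(a)) + \cdots + (-\z_{n-1}(a))\cdot a
= -\sum_{\ell=1}^{n-1}\z_\ell(a)\cup\z_{n-\ell}(a),
\end{equation*}
which is precisely $d\,\z_n(a)$ by \eqref{eq:da-cup}. Hence the representing cocycle is a coboundary, so its cohomology class is $0\in H^2(A)$. Since the definedness of the product is witnessed by the very same consistent system (the conditions $u_1u_2 = u_2u_3 = 0$, i.e. $u\cup u = 0$, hold because $H^*(A)$ is graded-commutative by Lemma \ref{lem:gr-comm}, giving $u\cup u = -u\cup u$ hence $2u\cup u=0$; but more directly $d\z_2(a) = -a\cup a$ shows $u\cup u = 0$ already), we conclude $\langle u,\dots,u\rangle$ is defined and contains $0$.

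The main obstacle is bookkeeping: one must be careful that the sign conventions in the definition of higher Massey products (all degree-$1$ classes, so the signs in \S\ref{subsec:massey-prod} are trivial) match the signs in \eqref{eq:da-cup}, and that the telescoping identity $\sum_{k}\z_k(a)\cup\z_{m}(a)$ over the appropriate index set really does reproduce $d a_{i,j}$ for \emph{every} admissible pair $(i,j)$, not just the extreme ones. This is a direct consequence of \eqref{eq:da-cup} applied with $k = j-i+1$, but verifying that the index ranges in the Massey recursion and in \eqref{eq:da-cup} align exactly is the one place where a slip is easy. Everything else is a substitution.
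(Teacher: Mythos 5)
Your strategy is exactly the paper's: use the binomial operations $\z_k(a)$ as the entries of a defining system, so that the representing cocycle becomes $\pm\, d\z_n(a)$ and hence a coboundary. However, the specific normalization you propose, $a_{i,j}=-\z_{j-i+1}(a)$ for all $i<j$, does not satisfy the Massey recursion once $j-i\ge 2$, so for $n\ge 4$ your system is not actually a defining system. The trouble is that the right-hand side $a_i a_{i+1,j}+a_{i,i+1}a_{i+2,j}+\cdots+a_{i,j-1}a_j$ is not homogeneous in your signs: the two extreme terms each involve one diagonal entry $a_i=a$ (no minus sign) and one off-diagonal entry (one minus sign), while every interior term is a product of two negated entries and so comes with a plus sign. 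Writing $m=j-i+1$, the right-hand side equals $-\z_1(a)\cup\z_{m-1}(a)-\z_{m-1}(a)\cup\z_1(a)+\sum_{\ell=2}^{m-2}\z_\ell(a)\cup\z_{m-\ell}(a)$, whereas $d\bigl(-\z_m(a)\bigr)=+\sum_{\ell=1}^{m-1}\z_\ell(a)\cup\z_{m-\ell}(a)$ by \eqref{eq:da-cup}; these differ by $2\bigl(\z_1(a)\cup\z_{m-1}(a)+\z_{m-1}(a)\cup\z_1(a)\bigr)$, which need not vanish. (For $n=3$ only adjacent slots occur, so your version is correct there, which is why the slip is easy to miss.)

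The fix is the alternating normalization $a_{i,j}=(-1)^{j-i}\z_{j-i+1}(a)$, i.e.\ coefficient $c_m=(-1)^{m-1}$ on $\z_m(a)$. Then $c_\ell c_{m-\ell}=(-1)^{m}=-c_m$ for all $1\le\ell\le m-1$, so every term of the Massey recursion acquires the same sign and the right-hand side is exactly $-c_m\sum_{\ell=1}^{m-1}\z_\ell(a)\cup\z_{m-\ell}(a)=c_m\, d\z_m(a)=d a_{i,j}$, as required. The representing cocycle then becomes $(-1)^{n}\sum_{k=1}^{n-1}\z_k(a)\cup\z_{n-k}(a)=(-1)^{n+1}d\z_n(a)$, a coboundary, and the rest of your argument (including the observation that $u\cup u=0$ because $d\z_2(a)=-a\cup a$) goes through and coincides with the proof in the paper.
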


\begin{proof}
Let $a \in A^1$ be a cocycle with $[a] = u$. 
We first treat the case $n=3$. Since 
$d\z_2(a) = - \z_1(a) \cup \z_1(a)$, it follows that
$\langle u, u, u\rangle$ is defined and contains 
$[-\z_1(a)\cup \z_2(a) - \z_2(a) \cup \z_1(a)]$.
Since $d\z_3(a) = -\z_1(a)\cup \z_2(a) - \z_2(a) \cup \z_1(a)$,
it follows that $\langle u,u,u\rangle$ contains $0$.
 
In the general case, it follows from equation \eqref{eq:da-cup} with 
$1 \le k < n$ that $(-1)^{n}\sum_{k=1}^{n}\z_k(a) \cup \z_{k-n}(a)$ is a 
cocycle with cohomology class in the $n$-tuple Massey product
$\langle [a], \ldots , [a] \rangle$. Then from equation
\eqref{eq:da-cup} with $k=n$ it follows that this cohomology
class is zero. Thus, for any element $u \in H^1(A)$, 
the $\z_k$ maps show that $\langle u, \ldots , u \rangle$ 
contains $0$.
\end{proof}
 
If the notion of binomial $\cup_1$-dga is replaced by an arbitrary 
$\cup_1$-dga, then these $n$-tuple Massey products are not 
necessarily defined, and if defined, may not contain $0$.  
 
 \begin{example}
 \label{ex:Massey-undefined-bis}
 Let $A$ be the subalgebra of $\T(\{x\})$ generated in degree
 one by powers of $x$. Following the steps in the proof of
 equation \eqref{eq:da-cup} with 
 $\binom{n}{k-1} + \binom{n}{k} = \binom{n+1}{k}$ in place of
 equation \eqref{eq:zk1}, it follows that
$dx^n = - \sum_{k=1}^{n-1} \binom{n}{k}x^k \cup x^{n-k}$.  
Therefore, $A$ is a $\cup_1$-dga. Moreover, 
$[x] \cup [x] \ne 0$ in $H^2(A)$, and so the Massey triple 
product $\langle [x], [x], [x] \rangle$ is not defined.
 \end{example}
 
Thus, over the integers the Massey triple product
$\langle u,u,u \rangle$ for $u\in H^1(A)$ may not be defined
for $A$ a $\cup_1$-dga, but is always defined and contains 
zero if $A$ is a binomial $\cup_1$-dga.

\subsection{Massey triple products in characteristic $3$}
\label{subsec:Massey-3}
As we saw in Proposition \ref{prop:mtp}, if $A$ is a binomial $\cup_1$-dga 
(over $\Z$) and if $u$ is any element in $H^1(A)$,  
then the Massey product $\langle u, u, u \rangle$ is defined 
and contains $0$.  If $A$ is defined over $\Z_3$, though, such 
a Massey product need not vanish anymore, 
due to the lack of a $\zeta_3$ map in this context; 
see Remark \ref{rem:mtp-3-ne0}.

In this section, we use a graphical approach (based on Figure \ref{fig:mtp-3}) 
to analyze this phenomenon in more detail in the case when 
$A= C^\ast(X;\Z_3)$ is the cochain algebra of a $\Delta$-complex $X$, 
with coefficients in $\Z_3$. In the next section, we will use a different 
approach to study $p$-fold Massey products in $C^\ast(X;\Z_p)$ 
and generalize the next result.

Let $\delta \colon A^i\to A^{i+1}$ be 
the Bockstein operator associated to the coefficient sequence 
$0\to \Z_3\to\Z_9\to\Z_3\to 0$.  The following proposition shows that 
the element $[- a\cup \z_2(a) - \z_2(a) \cup a] \in 
\langle [a], [a], [a] \rangle$ is the negative of the mod $3$ Bockstein
applied to $[a]$, and hence in general, is nonzero. 

\begin{figure}
\begin{tikzpicture}[scale=0.95]
\draw (-4,0) -- (4,0) -- (0,6.9282) -- (-4,0);
\draw (4,0) -- (-2, 3.4641);
\draw (-4,0) -- (2, 3.4641);
\draw (0,0) -- (0,6.9282);
\draw [red, thick] (-1, 5.196152) -- (-.5, 4.6188);
\draw [<-, red, thick] (-1, 4.6188) -- (-.5, 4.943);
\node [red] at (-1.05, 4.4188) {\footnotesize $a$};
\node[red] at (-.5, 5.125) {\footnotesize$1$};
\draw[->,red,thick] (.3,4.943) -- (.3,4.143);
\node [red]  at (.3,5.15) {\footnotesize $1$};
\node [red] at (.3, 4) {\footnotesize $a$};
\node at (-4.25,0) {\footnotesize$0$};
\node at (4.25,0) {\footnotesize$1$};
\node at (0, 7.1) {\footnotesize$2$};
\node at (-2.25, 3.4641) {\footnotesize$5$};
\node at (2.25, 3.4641) {\footnotesize$4$};
\node at (0, -.25) {\footnotesize$3$};
\node at (-.15,2.55094) {\footnotesize$6$};
\draw [red, thick] (-0.5, 4.6188) -- (0.5,4.6188); 
\draw [red, thick]  (0.5, 4.6188) --  (2.165, 1.7349);
%
\draw [red, thick] (2.165, 1.7349) -- (3, 1.73205);
\draw [->, red, thick] (2.65, 1.5) -- (2.3, 2.1062);
\node [red, thick] at (2.75,1.4) {\footnotesize$1$};
\node [red,thick] at (2.5, 2.2) {\footnotesize$a$};
%
\draw [->, red, thick]  (1.9, 2.799) -- (1.3,2.4526);
\node [red, thick] at (2, 2.8) {\footnotesize$1$};
\node [red, thick] at (1.15, 2.45) {\footnotesize$a$};
%
\draw [->, red, thick] (2.3,1.4) -- (1.8, 1.6887);
\node [red, thick] at (2.3,1.23) {\footnotesize$2$};
\node [red, thick] at (1.65,1.65) {\footnotesize$a$};
%
\draw [red, thick] (2.165, 1.7349) -- (1.5,.6);
\draw [red, thick] (-1.5,.6) -- (-1.5,0);
%
%
\draw [red, thick] (-1.5, .6) --  (1.5,.6);
\draw[->,red,thick] (.8, .4) -- (.8, .8);
\node [red] at (.8,.2) {\footnotesize $2$};
\node [red] at (.8,.9) {\footnotesize $a$};
%
\draw [->, red, thick] (-1.8,.3) -- (-1.2, .3);
\node [red, thick] at (-1.89,.3) {\footnotesize$1$};
\node [red, thick] at (-1,.3) {\footnotesize$a$};
%
\draw [blue, thick] (-1.5, 1.2) -- (1,1.2) -- (1.5,1.825);
\draw[->, blue, thick] (-.8,1) -- (-.8, 1.4);
\node [blue,thick] at (-.8,1.55) {\footnotesize$\z_2(a)$};
\node [blue,thick] at (-.71,.9) {\footnotesize$1$};
%
\draw [->] (-4,-1) -- (0,-1);
\draw (0,-1) -- (4,-1);
\node at (-.23, -.8) {\footnotesize$x$};
\draw [->] (5,0) -- (3,3.464);
\draw (3, 3.464) -- (1, 6.9282);
\node at (3.3, 3.3) {\footnotesize$x$};
\draw [->] (-1, 6.9282) -- (-3, 3.464);
\draw (-3, 3.464) -- (-5,0);
\node at (-3.3,3.3) {\footnotesize$x$};
\end{tikzpicture}
\caption{Cochains for the Massey triple product 
$\langle [a], [a], [a] \rangle$ in $K(\Z_3,1)$}
\label{fig:mtp-3}
\end{figure}

\begin{proposition}
\label{prop:mtp-3}
Let $X$ be a $\Delta$-complex, and let 
$u \in H^1(X;\Z_3)$; then the Massey triple product
$\la u,u,u\ra$ contains the element $-\delta(u)$.
\end{proposition}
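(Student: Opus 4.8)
The plan is to realize $u$ by an explicit cocycle in the $\Z_3$-binomial $\cup_1$-dga $A=C^\ast(X;\Z_3)$ (which carries this structure by Example \ref{ex:cochain-cup1-zp}), write down a convenient representative of $\la u,u,u\ra$ using the binomial operation $\z_2$, and then identify the resulting cohomology class with $-\delta(u)$ by a direct computation on $2$-simplices. First I would pick a cocycle $a\in Z^1(X;\Z_3)$ with $[a]=u$. By Theorem \ref{thm:cup-zeta-p} with $p=3$ and $k=2$ we have $d\z_2(a)=-\z_1(a)\cup\z_1(a)=-a\cup a$, so $u\cup u=0$ and the triple product $\la u,u,u\ra$ is defined. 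Following the definition in Section~\ref{subsec:massey-prod}, I would take $a_1=a_2=a_3=a$ and $a_{1,2}=a_{2,3}=-\z_2(a)$ (legitimate, since $d(-\z_2(a))=a\cup a=a_i\cup a_j$); then $\la u,u,u\ra$ contains the class of the cocycle $a_1a_{2,3}+a_{1,2}a_3=-\bigl(a\cup\z_2(a)+\z_2(a)\cup a\bigr)$. Thus the statement reduces to the identity $[a\cup\z_2(a)+\z_2(a)\cup a]=\delta(u)$ in $H^2(X;\Z_3)$.

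To compute $\delta(u)$, I would lift $a$ to an integral cochain $\hat a\in C^1(X;\Z)$ whose value on each $1$-simplex lies in $\{0,1,2\}$. Since $a$ is a cocycle mod $3$, the cochain $\delta\hat a$ takes values in $3\Z$; write $\delta\hat a=3b$ with $b\in C^2(X;\Z)$. By the standard description of the Bockstein of $0\to\Z_3\to\Z_9\to\Z_3\to 0$ (equivalently, of $0\to\Z\xrightarrow{3}\Z\to\Z_3\to 0$), one has $\delta(u)=[\bar b]$, where $\bar b\in C^2(X;\Z_3)$ is the mod-$3$ reduction of $b$. On a $2$-simplex $s=[v_0,v_1,v_2]$ with front edge $e_1$, back edge $e_2$ and long edge $e_3$, this gives $b(s)=\bigl(\hat a(e_1)+\hat a(e_2)-\hat a(e_3)\bigr)/3$.

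The remaining step is to verify, on each such $s$, that $\bigl(a\cup\z_2(a)+\z_2(a)\cup a\bigr)(s)\equiv b(s)\pmod 3$. By \eqref{eq:cup-simplicial} the left-hand side equals $a(e_1)\z_2(a)(e_2)+\z_2(a)(e_1)a(e_2)$, where $\z_2(a)(e)=\binom{a(e)}{2}$ equals $1$ if $a(e)=2$ and $0$ otherwise. Writing $\alpha=\hat a(e_1)$, $\beta=\hat a(e_2)\in\{0,1,2\}$, the cocycle relation forces $\hat a(e_3)$ to be the representative of $\alpha+\beta$ in $\{0,1,2\}$, so $b(s)\in\{0,1\}$ and equals $1$ precisely when $\alpha+\beta\in\{3,4\}$, i.e.\ for $(\alpha,\beta)\in\{(1,2),(2,1),(2,2)\}$; a one-line comparison shows $\alpha\binom{\beta}{2}+\binom{\alpha}{2}\beta\bmod 3$ takes the same value in every case. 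Hence the two cocycles agree and $[a\cup\z_2(a)+\z_2(a)\cup a]=\delta(u)$, so $-\delta(u)\in\la u,u,u\ra$. (Alternatively, since $\delta$ and the triple product are natural and $u=f^\ast u_0$ for the classifying map $f\colon X\to K(\Z_3,1)$ and the generator $u_0$, it would suffice to check the identity for $u_0$ on the explicit $\Delta$-complex model of the $2$-skeleton of $K(\Z_3,1)$ depicted in Figure \ref{fig:mtp-3}, using the functoriality \eqref{eq:massey-func}.)

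The main obstacle is bookkeeping rather than conceptual: one must fix the sign conventions for $\delta$ and for the Massey representative, and reconcile the binomial expression $\alpha\binom{\beta}{2}+\binom{\alpha}{2}\beta$ with the "carry" $\bigl(\alpha+\beta-\overline{\alpha+\beta}\bigr)/3$ produced by the Bockstein; the rest is formal and handed to us by Theorem \ref{thm:cup-zeta-p}.
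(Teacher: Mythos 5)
Your proposal is correct, and after the common first step it takes a genuinely different route from the paper. Both arguments begin identically: use $d\z_2(a)=-a\cup a$ (Theorem \ref{thm:cup-zeta-p} with $p=3$, $k=2$) to form the defining system $a_{1,2}=a_{2,3}=-\z_2(a)$ and obtain the representative $-(a\cup\z_2(a)+\z_2(a)\cup a)$ of $\la u,u,u\ra$. From there the paper follows what you relegate to your parenthetical remark: it reduces by naturality of Massey products and of the Bockstein to a generator of $H^1(K(\Z_3,1);\Z_3)$, and then verifies the identity on the explicit $\Delta$-complex model of the $2$-skeleton in Figure \ref{fig:mtp-3}, where both the Massey representative and $d\hat{a}$ are supported on the single $2$-simplex $[0,3,6]$, with values $-1$ and $3$ respectively. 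You instead work on an arbitrary $\Delta$-complex and verify, simplex by simplex, that $a\cup\z_2(a)+\z_2(a)\cup a$ equals the Bockstein representative $\bar b$ on the nose for the standard lift $\hat{a}$ with values in $\{0,1,2\}$, by comparing $\alpha\binom{\beta}{2}+\binom{\alpha}{2}\beta \bmod 3$ with the carry $(\alpha+\beta-\overline{\alpha+\beta})/3$ over the nine pairs $(\alpha,\beta)\in\{0,1,2\}^2$; I checked all nine cases and they agree (both sides are $1$ exactly for $(1,2)$, $(2,1)$, $(2,2)$, and $0$ otherwise). Your route buys a slightly stronger, purely local statement (equality of cocycles, not just of classes) and avoids needing the explicit model of $K(\Z_3,1)$; the paper's reduction buys brevity and, more importantly, scalability, since the same $K(\Z_p,1)$ argument is reused in Theorem \ref{thm:mod-p-bock} for all odd primes $p$, where the analogous case-by-case comparison of $\sum_{i}\z_i(a)\cup\z_{p-i}(a)$ against the mod-$p$ carry would be considerably more laborious. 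The one point you should keep explicit, as you do, is the identification of the Bockstein of $0\to\Z_3\to\Z_9\to\Z_3\to 0$ with the mod-$3$ reduction of the connecting map of $0\to\Z\xrightarrow{3}\Z\to\Z_3\to 0$, which justifies computing $\delta(u)$ from $\delta\hat{a}=3b$.
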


\begin{proof}
Since every element in $H^1(X;\Z_3)$ is represented by a 
map from $X$ to an Eilenberg--MacLane space $K(\Z_3,1)$, 
it follows from equation \eqref{eq:massey-func} and the
naturality of the Bockstein, that it is sufficient to prove the result for 
$u$ a generator of $H^1(K(\Z_3,1);\Z_3)\cong \Z_3$.
For this it is enough to show that the result holds for the $2$-skeleton,
$Y$, of a CW-complex structure for $K(\Z_3,1)$.

Such a space is pictured in Figure \ref{fig:mtp-3}, which gives a 
$\Delta$-complex, $Y$, for the $2$-dimensional CW-complex obtained by
attaching a $2$-cell to a circle by an attaching map of degree $3$. 
As in Example \ref{ex:torus-cup}, line segments transverse to the $1$-cells in a 
$2$-dimensional $\Delta$-complex are used to picture $1$-cochains.
The numbers on the arrows give the values of the cochain on the
$1$-cells in the $\Delta$-complex.  Note that with 
$\Z_3$-coefficients, the $1$-cochain $a$ indicated 
in the figure is a cocycle and $[a]$ is a generator
of $H^1(Y;\Z_3)$.

The final step is to show that the 
cohomology class of the cocycle 
$-a \cup \z_2(a) - \z_2(a) \cup a$
in Figure \ref{fig:mtp-3} is $-\delta([a])$.
Note that the support of $-a \cup \z_2(a) - \z_2(a) \cup a$
is the $2$-simplex $[0,3,6]$, and the value of this cochain on
$[0,3,6]$ is $-1$. Then viewing the arrows with numbers as 
cochains with integer coefficients, it follows that the support
of $d\hat{a}$ is also $[0,3,6]$ and 
the value of $d\hat{a}$ on $[0,3,6]$
is $3$, where $\hat{a}$ denotes the integer
cochain indicated by the arrows labelled $a$.
This completes the proof.
\end{proof}

\begin{remark}
\label{rem:mtp-3-ne0}
Note that in the proof of Proposition \ref{prop:mtp-3}, the
element $u = [a]$ is a generator for 
$H^1(K(\Z_3,1);\Z_3) \cong \Z_3$ and $u\cup u = 0$, since
$d\z_2(a) = - a \cup a$. 
Recall from \S\ref{subsec:massey-prod}
that the indeterminacy of a Massey triple
product $\la u_1, u_2, u_3 \ra$ of cohomology classes
in $H^1(A)$, where $A$ is a dga, is the set of elements
in $u_1 \cup H^1(A) + H^1(A) \cup u_3$.
In this example, where $A=C^{\ast}(K(\Z_3,1),\Z_3)$, the cup product map
$H^1 \otimes H^1\to H^2$ is the zero map, so the indeterminacy of
$\la u,u,u\ra$ is zero. It then follows from 
Proposition \ref{prop:mtp-3} that $-\delta(u)$ is the only 
element in $\la u,u,u\ra$. Since $-\delta(u) \ne 0$ in
$H^2(K(\Z_3,1);\Z_3)$, 
this gives an example of a 
Massey product of the form $\la u,u,u\ra$ that does not vanish.
\end{remark}

\subsection{$p$-fold Massey products in characteristic $p$}
\label{subsec:Massey-p}
The next result generalizes Proposition \ref{prop:mtp-3}, from triple 
Massey products in characteristic $3$ to  $p$-fold Massey products 
in characteristic an odd prime $p$.  For a $\Delta$-complex $X$, we 
let $\delta_p\colon H^1(X;\Z_p) \to H^2(X;\Z_p)$ be the Bockstein 
operator associated to the coefficient sequence 
$0\to \Z_p\to\Z_{p^2}\to\Z_p\to 0$. 

\begin{theorem}
\label{thm:mod-p-bock}
For each $u \in H^1(X;\Z_p)$ with $p$ 
an odd prime, the $p$-fold Massey product 
$\la u,\dots ,u\ra$ is defined and contains the 
element $-\delta_p(u)$.
\end{theorem}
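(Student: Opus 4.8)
The plan is to build an explicit defining system for $\langle u,\dots,u\rangle$ out of the binomial operations $\z_k$ and then to identify the resulting representative cocycle with $-\delta_p(u)$ by a computation on $2$-simplices, in the spirit of (but not reducing to) the bookkeeping in the proof of Proposition \ref{prop:mtp-3}. First I would fix a cocycle $a\in C^1(X;\Z_p)$ with $[a]=u$. By Example \ref{ex:cochain-cup1-zp}, $\Z_p\oplus C^{\ge 1}(X;\Z_p)$ is a $\Z_p$-binomial $\cup_1$-dga, so the cochains $\z_k(a)\in C^1(X;\Z_p)$ are defined for $1\le k\le p-1$, and since $da=0$ Theorem \ref{thm:cup-zeta-p} gives $d\z_k(a) = -\sum_{\ell=1}^{k-1}\z_\ell(a)\cup\z_{k-\ell}(a)$ for $2\le k\le p-1$.

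Next I would set $a_i\coloneqq a$ for $1\le i\le p$ and $a_{i,j}\coloneqq (-1)^{j-i}\,\z_{j-i+1}(a)$ for $1\le i<j\le p$ with $(i,j)\ne(1,p)$. The highest binomial that appears is $\z_{p-1}(a)$, which \emph{is} available over $\Z_p$, whereas $\z_p(a)$ — the element that would force the product to contain $0$ — is not, which is exactly why the conclusion can be nonzero. Writing $m=j-i+1$, a short sign count using the formula of Theorem \ref{thm:cup-zeta-p} shows that the defining equations $d a_{i,j} = \sum_{k=i}^{j-1} a_{i,k}\cup a_{k+1,j}$ (with the convention $a_{i,i}=a_i$) from \S\ref{subsec:massey-prod} are satisfied, so $\langle u,\dots,u\rangle$ is defined. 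Applying the same sign count to the representative $\sum_{k=1}^{p-1} a_{1,k}\cup a_{k+1,p}$ shows that $\langle u,\dots,u\rangle$ contains the cohomology class of $(-1)^p\sum_{\ell=1}^{p-1}\z_\ell(a)\cup\z_{p-\ell}(a)$, which equals $-\sum_{\ell=1}^{p-1}\z_\ell(a)\cup\z_{p-\ell}(a)$ because $p$ is odd.

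It then remains to show that $\bigl[-\sum_{\ell=1}^{p-1}\z_\ell(a)\cup\z_{p-\ell}(a)\bigr] = -\delta_p(u)$, which is the heart of the argument. Let $\hat a\in C^1(X;\Z)$ be the integral lift of $a$ with $\hat a(e)\in\{0,1,\dots,p-1\}$ on every $1$-simplex $e$. Since $a$ is a $\Z_p$-cocycle, $d\hat a = p\,\hat c$ for a unique integral $2$-cochain $\hat c$, and, with the Bockstein convention used in Proposition \ref{prop:mtp-3} for $0\to\Z_p\to\Z_{p^2}\to\Z_p\to 0$, one has $\delta_p(u) = [\hat c\bmod p]$. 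On a $2$-simplex $s$ with front face $e_1$ and back face $e_2$, formula \eqref{eq:cup-simplicial} together with the description of the $\z$-maps on cochains from Example \ref{ex:cochain-cup1-zp} gives $\bigl(\z_\ell(a)\cup\z_{p-\ell}(a)\bigr)(s)\equiv\binom{\hat a(e_1)}{\ell}\binom{\hat a(e_2)}{p-\ell}\pmod p$. Summing over $\ell$ and using Vandermonde's identity $\sum_{\ell=0}^{p}\binom{m}{\ell}\binom{n}{p-\ell}=\binom{m+n}{p}$ together with $\binom{m}{p}=\binom{n}{p}=0$ for $0\le m,n\le p-1$, one gets $\sum_{\ell=1}^{p-1}\bigl(\z_\ell(a)\cup\z_{p-\ell}(a)\bigr)(s)\equiv\binom{\hat a(e_1)+\hat a(e_2)}{p}\pmod p$. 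Since $\binom{N}{p}\bmod p$ equals the carry $\lfloor N/p\rfloor\in\{0,1\}$ for $0\le N\le 2p-2$ (by Lucas's theorem, or a direct check), and since $0\le\hat a(e_3)\le p-1$ forces $\hat c(s)=\bigl(\hat a(e_1)+\hat a(e_2)-\hat a(e_3)\bigr)/p=\lfloor(\hat a(e_1)+\hat a(e_2))/p\rfloor$, we conclude that $-\sum_{\ell=1}^{p-1}\z_\ell(a)\cup\z_{p-\ell}(a)$ and $-\hat c\bmod p$ agree as $2$-cochains, so the class in question is $-\delta_p(u)$.

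The main obstacle I expect is precisely this last step: translating the cup products of binomial cochains into the Bockstein at the cochain level, which relies on the particular integral lift (digits in $\{0,\dots,p-1\}$) and on the Vandermonde--Lucas reduction that replaces the ad hoc ``carry'' computation used for $p=3$ in Proposition \ref{prop:mtp-3}; the construction and verification of the defining system, by contrast, is routine sign-chasing once Theorem \ref{thm:cup-zeta-p} is in hand. One could instead first reduce, via \eqref{eq:massey-func} and naturality of $\delta_p$, to the case of a generator of $H^1(K(\Z_p,1);\Z_p)$ and the $2$-complex obtained from $S^1$ by attaching a $2$-cell along a degree-$p$ map, exactly as in Proposition \ref{prop:mtp-3}, but this reformulation does not bypass the combinatorial core of the proof.
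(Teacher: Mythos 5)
Your proposal is correct, but it takes a genuinely different route from the paper's proof. The paper reduces, via \eqref{eq:massey-func} and naturality of the Bockstein, to the universal case of a generator $v\in H^1(Y;\Z_p)$ where $Y$ is the circle with a $2$-cell attached by a degree-$p$ map (the $2$-skeleton of $K(\Z_p,1)$), and then quotes Theorem~2 of \cite{Porter}, which computes $p$-fold Massey products of degree-one classes from the coefficients of the Magnus expansion of the relator; the coefficient of $x^p$ in $(1+x)^p$ mod $p$ identifies the product with $-\delta_p(v)$. You instead stay on an arbitrary $\Delta$-complex and build an explicit defining system $a_{i,j}=(-1)^{j-i}\z_{j-i+1}(a)$ from the binomial operations, using Theorem \ref{thm:cup-zeta-p} to verify the defining equations (your sign bookkeeping checks out, and the resulting representative $-\sum_{\ell=1}^{p-1}\z_\ell(a)\cup\z_{p-\ell}(a)$ agrees with Remark \ref{rem:mod-p-zeta-massey}); you then identify this representative with $-\delta_p(u)$ at the cochain level via the digit lift $\hat a$, Vandermonde, and the carry interpretation of $\binom{N}{p}\bmod p$, all of which is correct. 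Your argument buys a stronger, purely internal statement --- the canonical $\zeta$-representative literally equals $-\hat c\bmod p$ as a $2$-cochain, not merely up to coboundary, and no external reference or reduction to $K(\Z_p,1)$ is needed --- at the cost of the Vandermonde--Lucas computation; the paper's argument is shorter given the citation and also pins down the exact value of the product (not just containment) in the universal case, where the indeterminacy vanishes. One small remark: your closing sentence suggests the reduction to $K(\Z_p,1)$ would not bypass the combinatorial core, but in the paper it does, precisely because \cite[Theorem 2]{Porter} absorbs that computation.
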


\begin{proof}
Theorem 2 in \cite{Porter} gives a general
formula for Massey products of $1$-dimensional
cohomology classes in a finite CW-complex whose 
$1$-skeleton is a wedge of circles. The formula is
in terms of the coefficients in the Magnus expansions
of the words corresponding to the attaching maps of the
$2$-cells. 

First let $Y$ be the $2$-dimensional complex
obtained by attaching a $2$-cell to a circle by an attaching map
of degree $p$, and let $v\in H^1(Y;\Z_p)$ be the cohomology class 
represented a $1$-cocycle $a\in Z^1(Y;\Z_p)$ with
$a(z)=1$, where $z$ denotes the $1$-chain corresponding
to the circle in the $1$-skeleton. 
It follows from \cite[Theorem 2]{Porter} 
that the $p$-fold Massey product $\la v, \ldots , v \ra$ is defined 
and is equal to $- c_p(x)[b]$,  
where $c_p(x)$ is the coefficient of $x^p$ mod $p$ in
$(1 + x)^p = 1 + x^p$, and $b\in Z^2(Y;\Z_p)$ is the reduction 
mod $p$ of an integer $2$-cocycle $\beta$ determined by 
$d \alpha = p \beta$, where $\alpha \in C^1(Y;\Z)$ is an 
integer cochain with mod $p$ reduction equal to $a$.  
Note that in our case $c_p(x) = 1$, and from the equation
$d\alpha = p \beta$, it follows from the definition of
the Bockstein operator
that $[b] = \delta_p([a])$. So in $Y$, we have that 
$\la v, \ldots , v \ra = - \delta_p(v)$.

Now, since $Y$ is the $2$-skeleton of a $K(\Z_p,1)$, and since every 
element $u\in H^1(X;\Z_p)$ is equal to the pullback $f^*(v)$ of the class 
$v\in H^1(K(\Z_p,1);\Z_p)=H^1(Y;\Z_p)$ for a (cellular) map
$f\colon X \to  K(\Z_p,1)$, it follows from the naturality
of Massey products and the Bockstein operators that 
the $p$-fold Massey product $\la u, \ldots , u \ra$ is 
defined and contains the element $- \delta_p (u)\in H^2(X;\Z_p)$. 
\end{proof}

\begin{remark}
\label{rem:mod-p-bockstein}
Since the Bockstein $\delta_p(v)$ is nonzero in 
$H^2(K(\Z_p,1);\Z_p)$, the above proof provides 
examples where $p$-fold Massey products of  
elements $v\in H^1(K(\Z_p,1);\Z_p)$ do not vanish.
\end{remark}

\begin{remark}
\label{rem:mod-p-zeta-massey}
Let $u\in H^1(X;\Z_p)$ be a cohomology class represented 
by a cocycle $a\in Z^1(X;\Z_p)$. By equation \eqref{eq:da-cup}, 
we have that $b=- \sum_{i=1}^{p-1} \z_i(a) \cup \z_{p-i}(a)$ is a cocycle 
with cohomology class an element in the $p$-fold Massey product 
$\langle [a], \dots , [a] \rangle$.
\end{remark}

\subsection{Massey triple products with restricted indeterminacy}
\label{subsec:restricted}

In this section we define the restricted Massey triple
product, $\langle u_1, u_1, u_2 \rangle_r$.
Restricted Massey triple products are
invariants of $1$-equivalence of binomial $\cup_1$-dgas
where the coefficient ring $R$ is either $\Z$ or $\Z_p$, with
$p$ a prime and $p \ge 3$. In section \ref{subsec:types} we 
will use these invariants to give an example of a family 
of $2$-dimensional CW-complexes that have isomorphic 
cohomology rings yet are not $1$-equivalent.

Let $A$ be a binomial $\cup_1$-dga over a ring $R$ as above.  
Given elements $u_1, u_2 \in H^1(A)$ with $u_1 \cup u_2 = 0$, 
let $a_i$ be cocycle representatives for $u_i$, and let $a_{12}$ 
be a $1$-cochain such that $da_{12} = a_1 \cup a_2$.
By Lemma \ref{lem:good}, the cochain 
\begin{equation}
\label{eq:gamma-c}
\gamma(a_1, a_2, a_{1,2})\coloneqq a_1\cup a_{1,2} - \z_2(a_1)\cup a_2
\end{equation}
is a $2$-cocycle. 

\begin{definition}
\label{lem:restricted-mtp}
With notation as above, we define the \emph{restricted Massey 
triple product}\/ $\langle u_1, u_1, u_2 \rangle_r$ to be the 
subset of $H^2(A)$ consisting of all elements of the form
$[\gamma(a_1, a_2, a_{1,2})]$. 
The cochains $a_1, a_2, a_{1,2}$ are called a 
\emph{defining system}\/ for $\langle u_1,u_1, u_2\rangle_r$.
\end{definition}

The next lemma computes the indeterminacy of these restricted 
Massey triple products.  We restrict our attention to the case when 
$A=C^*(X;R)$ is the cochain algebra of a $\Delta$-complex $X$, 
with coefficients in the ring $R=\Z$ or $R=\Z_p$ with $p\ge 3$. 
We sketch a proof in this case, following a suggestion 
by the referee. The indeterminacy of restricted Massey products
will covered in more generality in \cite{Porter-Suciu-2021-3}.

\begin{lemma}
\label{lem:ind-res}
The indeterminacy of $\langle u_1, u_1, u_2 \rangle_r$ is 
the set $u_1 \cup H^1(A)$.
\end{lemma}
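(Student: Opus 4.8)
The plan is to compute explicitly how the cohomology class $[\gamma(a_1,a_2,a_{1,2})]$ changes when one varies the defining system $(a_1,a_2,a_{1,2})$, and to show the set of all differences so obtained is exactly $u_1\cup H^1(A)$. First I would fix one defining system $(a_1,a_2,a_{1,2})$ and consider another, $(a_1',a_2',a_{1,2}')$, with $[a_i']=u_i$ and $da_{1,2}'=a_1'\cup a_2'$. Write $a_1'=a_1+d\mu_1$, $a_2'=a_2+d\mu_2$ with $\mu_i\in A^0$ (here using that $R\oplus C^1(X;R)$ is what carries the binomial structure, so the $\zeta$-maps behave well; in the cochain setting $d\mu_i$ is a coboundary of a $0$-cochain). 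The difference $\gamma(a_1',a_2',a_{1,2}')-\gamma(a_1,a_2,a_{1,2})$ must then be expanded using bilinearity of $\cup$, the Leibniz rule, and the formula $d\zeta_2(a)=-a\cup a$ from Lemma \ref{lem:good}, together with the additive behavior of $\zeta_2$, namely $\zeta_2(a+b)=\zeta_2(a)+a\cup_1 b+\zeta_2(b)$ coming from \eqref{bin1-z} applied in $R\oplus A^1$.

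The key computation is to show that, modulo coboundaries, $\gamma(a_1',a_2',a_{1,2}')-\gamma(a_1,a_2,a_{1,2})$ reduces to $a_1\cup(a_{1,2}'-a_{1,2}) + (\text{terms that are cocycles of the form }a_1\cup z)$, and that conversely every class $u_1\cup[z]$ with $[z]\in H^1(A)$ is realized. For the "$\subseteq$" direction: once $a_1$ and $a_2$ are fixed, the only freedom in $a_{1,2}$ is to add a cocycle $z\in Z^1(A)$ (since $d$ is injective modulo $Z^1$), which changes $\gamma$ by $a_1\cup z$, contributing precisely $u_1\cup H^1(A)$; the more delicate part is checking that changing the cocycle representatives $a_i$ within their cohomology classes changes $\gamma$ only by something already in $u_1\cup H^1(A)$ plus a coboundary. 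Here the asymmetry of the restricted product (both first slots are $u_1$, only the third is $u_2$) is what makes the indeterminacy land in $u_1\cup H^1(A)$ rather than $u_1\cup H^1(A)+H^1(A)\cup u_2$: the term $\zeta_2(a_1)\cup a_2$ absorbs the variation in $a_2$ because $d\zeta_2(a_1)=-a_1\cup a_1$ and varying $a_2$ by $d\mu_2$ gives $\zeta_2(a_1)\cup d\mu_2$, which combines with $a_1\cup a_{1,2}$-variation into a coboundary plus an $a_1\cup(\cdots)$ term. For the "$\supseteq$" direction: given $z\in Z^1(A)$, replace $a_{1,2}$ by $a_{1,2}+z$ and keep everything else; this yields a valid defining system and shifts $[\gamma]$ by $u_1\cup[z]$, so every element of $u_1\cup H^1(A)$ occurs.

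I expect the main obstacle to be the bookkeeping in the variation-of-$a_i$ computation: one must carefully track the cross terms arising from $\zeta_2(a_1+d\mu_1)$ and from the need to choose a new $a_{1,2}'$ compatible with the new product $a_1'\cup a_2'$ (which differs from $a_1\cup a_2$ by a specific coboundary, so $a_{1,2}'$ differs from $a_{1,2}$ by that primitive plus a cocycle). Writing $a_1'\cup a_2' - a_1\cup a_2 = d(\mu_1\cup a_2 + a_1\cup\mu_2 + d\mu_1\cup\mu_2)$ (using Leibniz and $da_i=0$) pins down the allowed $a_{1,2}'$, and then the residual terms in $\gamma$, after cancellation of an explicit coboundary, should all be of the form $a_1\cup(\text{cocycle})$. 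Since the referee's suggested proof is only sketched in the stated context, I would present this as: (1) exhibit an explicit coboundary witnessing that the $a_{1,2}$-and-$a_2$ variations cancel up to an $a_1\cup H^1$ term, (2) note the remaining freedom is exactly adding a $1$-cocycle to $a_{1,2}$, giving $u_1\cup H^1(A)$, and (3) observe this set is visibly achieved, completing the identification.
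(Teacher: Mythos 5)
Your plan follows the paper's proof almost exactly: fix the cocycle representatives, observe that the residual freedom in $a_{1,2}$ is addition of a $1$-cocycle $z$, which shifts $\gamma$ by $a_1\cup z$ and realizes all of $u_1\cup H^1(A)$, and then check that changing $a_1$ or $a_2$ by a coboundary (with the compensating change of $a_{1,2}$ that you correctly identify) alters $\gamma$ only by an exact term. The $a_2$-variation works just as you say: with $\tilde a_2=a_2+db$ and $\tilde a_{1,2}=a_{1,2}-a_1\cup b$, the cocycle $\gamma$ changes by the coboundary $d(\z_2(a_1)\cup b)$.

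The one place where your ``bookkeeping'' hides a genuine idea is the $a_1$-variation. Expanding $\z_2(a_1+db)=\z_2(a_1)+a_1\cup_1 db+\z_2(db)$ and taking $\tilde a_{1,2}=a_{1,2}+b\cup a_2$, the change in $\gamma$ reduces---after using $a_1\cup_1 db=a_1\cup b-b\cup a_1$ and $d(b\cup a_{1,2})=db\cup a_{1,2}+b\cup a_1\cup a_2$---to the leftover term $\bigl(db\cup b-\z_2(db)\bigr)\cup a_2$. This is \emph{not} of the form $a_1\cup(\text{cocycle})$, contrary to what your plan asserts, and it is not visibly a coboundary. The paper disposes of it by producing an explicit $0$-cochain $c(b)$ with $c(b)(v)=\z_2(-b(v))$ and verifying the identity $d(c(b))=db\cup b-\z_2(db)$, after which the entire variation becomes $d\bigl(b\cup a_{1,2}+c(b)\cup a_2\bigr)$. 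Without this primitive (or some substitute for it) the argument does not close; note also that this step uses the cochain-algebra setting in an essential way, since $c(b)$ is defined by evaluating $\z_2$ vertexwise, which is consistent with the lemma being stated only for $A=C^*(X;R)$.
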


\begin{proof}  
It suffices to show that the subset of 
$\langle u_1, u_1, u_2\rangle_r$ obtained from any fixed choice
of cocycles representing $u_1$ and $u_2$ is the entire set
$\langle u_1, u_1, u_2\rangle_r$.  Let $a_1, a_2, a_{1,2}$ 
be a defining system for $\langle u_1,u_1, u_2\rangle_r$. 
If one changes $a_2$ to $\tilde{a}_2= a_2 + db$ for some 
$b \in C^0(X)$ and $a_{1,2}$ to 
$\tilde{a}_{1.2} = a_{1,2} -a_1\cup b$, then 
$\gamma(a_1, a_2, a_{1,2})$ changes by 
$d(\z_2(a_1) \cup b)$. On the other hand, 
if one changes $a_1$ to $\tilde{a}_1 = a_1 + db$
and $a_{1,2}$ to $\tilde{a}_{1,2} = a_{1,2} + b \cup a_2$,
then by using the formulas
\begin{align}
\z_2(a_1 + db) & = \z_2(a_1) + a_1 \cup_1 db + \z_2(db),\notag \\
 a_1 \cup_1 db & = a_1 \cup b- b \cup a_1 ,\\
d(c(b)) & = db \cup b - \z_2(db), \notag
\end{align}
where $c(b) \in C^0(X)$ is defined by
$c(b)(v) = \z_2(-b(v))$ for all vertices $v$ in $X$,
it follows that $\gamma(a_1, a_2, a_{1,2})$ changes by
$d(b \cup a_{1,2} + c(b)\cup a_2)$, and the proof is complete.
\end{proof}

\begin{remark}
\label{rem:additivity}
It follows straight from the definition that restricted 
Massey triple products satisfy the following additivity formula.
\begin{equation}
\label{eq:massey-add}
\langle u, u, w_1 \rangle_r + \langle u, u, w_2 \rangle_r 
\subseteq \langle u, u, w_1 + w_2 \rangle_r  \, .
\end{equation}
\end{remark}

\begin{lemma}
\label{lem:restricted-massey}
Let $\varphi\colon A\to B$ be a morphism between two binomial 
cup-one $R$-algebras, and let $\varphi^*\colon H^*(A)\to H^*(B)$ be the 
induced morphism in cohomology.  If $u_1, u_2 \in H^1(A)$ are such that 
$u_1 \cup u_2 = 0$, then $\varphi^*(\langle u_1, u_1, u_2\rangle_r) 
\subseteq \langle \varphi^*(u_1), \varphi^*(u_1), \varphi^*(u_2)\rangle_r$. 
Moreover, if $\varphi$ is a $1$-quasi-isomorphism, then the inclusion 
holds as equality.
\end{lemma}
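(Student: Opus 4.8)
The plan is to handle the two assertions in turn, and the main work is in the second (the equality under a $1$-quasi-isomorphism); the first is essentially formal.

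For the inclusion, the key point is that $\varphi$, being a morphism of binomial $\cup_1$-dgas, commutes with the cup product, the cup-one product and the differential, hence with the operation $\z_2$ (as recorded just after Definition~\ref{def:binomial-cup-one-algebra}) and therefore with the cochain $\gamma$ of \eqref{eq:gamma-c}, i.e. $\varphi(\gamma(a_1,a_2,a_{1,2})) = \gamma(\varphi(a_1),\varphi(a_2),\varphi(a_{1,2}))$. First I would check that $\varphi$ sends a defining system in $A$ to one in $B$: if $(a_1,a_2,a_{1,2})$ is a defining system for $\langle u_1,u_1,u_2\rangle_r$, then $\varphi(a_1),\varphi(a_2)$ are cocycles representing $\varphi^*(u_1),\varphi^*(u_2)$, we have $d\varphi(a_{1,2}) = \varphi(a_1\cup a_2) = \varphi(a_1)\cup\varphi(a_2)$, and $\varphi^*(u_1)\cup\varphi^*(u_2) = \varphi^*(u_1\cup u_2) = 0$, so that $\langle\varphi^*(u_1),\varphi^*(u_1),\varphi^*(u_2)\rangle_r$ is defined. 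Since then $\varphi^*[\gamma(a_1,a_2,a_{1,2})] = [\gamma(\varphi(a_1),\varphi(a_2),\varphi(a_{1,2}))]$, the inclusion $\varphi^*(\langle u_1,u_1,u_2\rangle_r)\subseteq \langle\varphi^*(u_1),\varphi^*(u_1),\varphi^*(u_2)\rangle_r$ follows.

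For the equality, suppose $\varphi$ is a $1$-quasi-isomorphism, so that $\varphi^*\colon H^1(A)\to H^1(B)$ is onto; in fact only this surjectivity will be used. Fix once and for all cocycles $a_1,a_2\in Z^1(A)$ representing $u_1,u_2$ and a cochain $a_{1,2}\in A^1$ with $da_{1,2} = a_1\cup a_2$ (which exists because $[a_1\cup a_2] = u_1\cup u_2 = 0$). By the proof of Lemma~\ref{lem:ind-res}, altering the cocycle representatives of $u_1$ and $u_2$ (with a compensating change of $a_{1,2}$) does not change the cohomology class $[\gamma]$, so that $\langle u_1,u_1,u_2\rangle_r$ is already obtained by varying $a_{1,2}$ alone; thus $\langle u_1,u_1,u_2\rangle_r = \{\,[\gamma(a_1,a_2,a_{1,2})] + u_1\cup\mu : \mu\in H^1(A)\,\}$, and the analogous description holds in $B$ for the defining system $(\varphi(a_1),\varphi(a_2),\varphi(a_{1,2}))$. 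Now let $\eta\in\langle\varphi^*(u_1),\varphi^*(u_1),\varphi^*(u_2)\rangle_r$; write $\eta = [\gamma(\varphi(a_1),\varphi(a_2),\varphi(a_{1,2}))] + \varphi^*(u_1)\cup\nu$ with $\nu\in H^1(B)$, pick $\mu\in H^1(A)$ with $\varphi^*(\mu) = \nu$, and choose a cocycle $m\in Z^1(A)$ representing $\mu$. Then $(a_1,a_2,a_{1,2}+m)$ is a defining system for $\langle u_1,u_1,u_2\rangle_r$, and since $\gamma(a_1,a_2,a_{1,2}+m) = \gamma(a_1,a_2,a_{1,2}) + a_1\cup m$ one gets $\varphi^*[\gamma(a_1,a_2,a_{1,2}+m)] = [\gamma(\varphi(a_1),\varphi(a_2),\varphi(a_{1,2}))] + \varphi^*(u_1)\cup\varphi^*(\mu) = \eta$, so $\eta\in\varphi^*(\langle u_1,u_1,u_2\rangle_r)$. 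Together with the inclusion already proved, this yields the equality.

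The step I expect to carry the weight is this last paragraph, specifically the appeal to Lemma~\ref{lem:ind-res}: what makes the reduction work is the observation that the only genuine freedom in a defining system (up to cohomology) is the freedom to add a cocycle to $a_{1,2}$, which is exactly what surjectivity of $\varphi^*$ on $H^1$ controls — and this is also why injectivity of $\varphi^*$ on $H^2$ plays no role. I should also note the scope: Lemma~\ref{lem:ind-res} was established for cochain algebras, so the equality is being asserted in that generality, and for an arbitrary binomial $\cup_1$-dga one would first need the corresponding indeterminacy computation.
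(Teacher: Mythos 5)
Your proposal is correct and follows essentially the same route as the paper: the inclusion by naturality of cup products and the $\zeta$-maps, and the equality by combining the inclusion with the indeterminacy computation of Lemma \ref{lem:ind-res} together with surjectivity of $\varphi^*$ on $H^1$. Your closing caveat about the scope of Lemma \ref{lem:ind-res} (proved in the paper only for cochain algebras) is a fair observation that applies equally to the paper's own two-line proof.
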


\begin{proof}
The first claim follows from the naturality of cup-products and 
$\zeta$-maps. The second claim follows from the first one and 
Lemma \ref{lem:ind-res}.
\end{proof}

The next result shows that restricted Massey triple products are 
an invariant of $1$-equiva\-lence for binomial cup-one algebras.
Recall that for a graded algebra $H$, we denote by
$D^2H$ 
the image of the cup product map $H^1 \otimes H^1 \to H^2$.

\begin{proposition}
\label{prop:res-mas}
Let $A$ and $B$ be two $1$-equivalent binomial $\cup_1$-dgas over $R$. 
Then there is an isomorphism $\Phi\colon H^1(A) \oplus D^2(H(A)) \to 
H^1(B) \oplus D^2(H(B))$ which preserves degrees, commutes 
with cup products, and satisfies 
$ \Phi \left(\langle u_1, u_1, u_2\rangle_r\right)=
\langle \Phi(u_1),  \Phi(u_1),  \Phi(u_2)\rangle_r$ 
for all $u_1, u_2\in H^1(A)$ with $u_1 \cup u_2 = 0$.
\end{proposition}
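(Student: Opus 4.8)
The plan is to reduce everything to the case of a single $1$-quasi-isomorphism and then invoke Lemma~\ref{lem:restricted-massey}. By the definition of $1$-equivalence there is a finite zig-zag of $1$-quasi-isomorphisms of binomial $\cup_1$-dgas connecting $A$ to $B$, say $A = A_0 \xleftarrow{\varphi_1} A_1 \xrightarrow{\varphi_2} A_2 \leftarrow \cdots \to A_m = B$. The map $\Phi$ will be obtained by composing the maps induced in cohomology along this zig-zag (inverting those that point the ``wrong'' way) after restricting each of them to the subspace $H^1 \oplus D^2(H(-))$ of $H^{\le 2}$. Thus the crux is to prove that every $1$-quasi-isomorphism $\varphi\colon C \to D$ of binomial $\cup_1$-dgas restricts to an isomorphism $\Phi_\varphi\colon H^1(C) \oplus D^2(H(C)) \xrightarrow{\,\sim\,} H^1(D) \oplus D^2(H(D))$ enjoying the three required properties, and that these properties are stable under composition and inversion.

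For a fixed $1$-quasi-isomorphism $\varphi\colon C \to D$ the map $\varphi^*$ is an isomorphism on $H^1$, a monomorphism on $H^2$, and it commutes with cup products. First I would check that $\varphi^*$ carries $D^2(H(C))$ \emph{onto} $D^2(H(D))$: an arbitrary element $v_1 \cup v_2 \in D^2(H(D))$ can, by surjectivity of $\varphi^*$ in degree $1$, be written as $\varphi^*(u_1) \cup \varphi^*(u_2) = \varphi^*(u_1 \cup u_2)$. Combined with injectivity of $\varphi^*$ on $H^2$, this shows $\varphi^*$ restricts to an isomorphism $D^2(H(C)) \cong D^2(H(D))$, so $\Phi_\varphi := \varphi^*|_{H^1(C) \oplus D^2(H(C))}$ is a degree-preserving, cup-product-preserving isomorphism.

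Next I would handle the restricted Massey products. If $u_1, u_2 \in H^1(C)$ with $u_1 \cup u_2 = 0$, then injectivity of $\varphi^*$ on $H^2$ gives $\varphi^*(u_1) \cup \varphi^*(u_2) = 0$, so $\langle \varphi^*(u_1), \varphi^*(u_1), \varphi^*(u_2)\rangle_r$ is defined; and since $\varphi$ is a $1$-quasi-isomorphism, Lemma~\ref{lem:restricted-massey} yields the equality $\varphi^*(\langle u_1,u_1,u_2\rangle_r) = \langle \varphi^*(u_1), \varphi^*(u_1), \varphi^*(u_2)\rangle_r$, i.e.\ $\Phi_\varphi(\langle u_1,u_1,u_2\rangle_r) = \langle \Phi_\varphi(u_1), \Phi_\varphi(u_1), \Phi_\varphi(u_2)\rangle_r$. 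Applying $\Phi_\varphi^{-1}$ and relabeling shows the inverse isomorphism has the same intertwining property. Finally, composing the isomorphisms $\Phi_{\varphi_i}^{\pm 1}$ along the zig-zag produces $\Phi$; each factor preserves degrees, commutes with cup products, and intertwines the restricted triple products (the last point using that each factor sends a relation $u_1 \cup u_2 = 0$ to $\Phi(u_1) \cup \Phi(u_2) = 0$), hence so does the composite.

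The genuine content—in particular the computation of the indeterminacy $u_1 \cup H^1(A)$ of $\langle u_1,u_1,u_2\rangle_r$—has already been absorbed into Lemma~\ref{lem:ind-res} and Lemma~\ref{lem:restricted-massey}, so I do not expect a serious obstacle. The only point that needs a little care is the identification of the \emph{decomposable} subspaces $D^2(H(-))$ under $\varphi^*$ (where $\varphi^*$ is a priori only injective in degree $2$), together with the routine bookkeeping required to carry the three properties through an arbitrary zig-zag.
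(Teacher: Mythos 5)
Your proposal is correct and follows essentially the same route as the paper: reduce to a single $1$-quasi-isomorphism, observe that $\varphi^*$ restricts to an isomorphism on $H^1 \oplus D^2(H(-))$ (using surjectivity in degree $1$, injectivity in degree $2$, and naturality of cup products), and invoke Lemma \ref{lem:restricted-massey} for the equality of restricted triple products. The paper's proof is just a terser version of the same argument, leaving the zig-zag bookkeeping implicit.
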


\begin{proof}
Recall from \S\ref{subsec:dga} that a $1$-quasi-isomorphism 
$\varphi\colon A\to B$ 
induces an isomorphism $\varphi^*\colon H^{1}(A)\to H^{1}(B)$ 
and a monomorphim $\varphi^*\colon H^{2}(A)\to H^{2}(B)$. 
By naturality of cup-products, the latter map restricts to an 
isomorphism $\varphi^*\colon D^2(H(A))\to D^2(H(B))$. 
The claim now follows from the definition of $1$-equivalence 
and the previous lemma.
\end{proof}

Given an element $u \in H^1$, consider the following subset of $H^2$,
\begin{equation}
\label{eq:u-res}
\langle u \rangle_r = \bigcup_{\{w\in H^1\colon uw=0\}} \langle u, u, w \rangle_r \, .
\end{equation}

\begin{corollary}
\label{cor:ur}
Let $A$ and $B$ be two $1$-equivalent binomial $\cup_1$-dgas over $R$, 
and let $\Phi$ be the isomorphism from Proposition \ref{prop:res-mas}.
Then $\Phi(\langle u\rangle_r) =\langle \Phi(u)\rangle_r$ for all $u\in H^1$. 
\end{corollary}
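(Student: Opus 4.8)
The plan is to obtain Corollary~\ref{cor:ur} as a purely formal consequence of Proposition~\ref{prop:res-mas}, the only real work being to reindex the union defining $\langle u\rangle_r$ along the bijection induced by $\Phi$ on degree~$1$. Recall from \eqref{eq:u-res} that $\langle u\rangle_r=\bigcup_{\{w\,:\,u\cup w=0\}}\langle u,u,w\rangle_r$, a subset of $H^2$; so the target is to show $\Phi$ carries this union (term by term, via Proposition~\ref{prop:res-mas}) onto the corresponding union for $\Phi(u)$.

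First I would record the two properties of $\Phi$ supplied by Proposition~\ref{prop:res-mas}: it restricts to an isomorphism $H^1(A)\to H^1(B)$, and it commutes with cup products (in particular its restriction to $D^2(H(A))\to D^2(H(B))$ is an isomorphism, hence injective). From these I would deduce the key equivalence: for $u,w\in H^1(A)$,
\[
u\cup w=0 \iff \Phi(u)\cup\Phi(w)=0,
\]
the forward direction because $\Phi(u)\cup\Phi(w)=\Phi(u\cup w)=\Phi(0)=0$, and the backward direction because $\Phi$ is injective on $D^2(H(A))$, which contains $u\cup w$. Since $\Phi$ is a bijection on $H^1$, it follows that $w\mapsto\Phi(w)$ restricts to a \emph{bijection} from $\{w\in H^1(A):u\cup w=0\}$ onto $\{w'\in H^1(B):\Phi(u)\cup w'=0\}$.

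Then I would simply compute
\[
\Phi\bigl(\langle u\rangle_r\bigr)
=\bigcup_{\{w\,:\,u\cup w=0\}}\Phi\bigl(\langle u,u,w\rangle_r\bigr)
=\bigcup_{\{w\,:\,u\cup w=0\}}\langle\Phi(u),\Phi(u),\Phi(w)\rangle_r
=\bigcup_{\{w'\,:\,\Phi(u)\cup w'=0\}}\langle\Phi(u),\Phi(u),w'\rangle_r
=\langle\Phi(u)\rangle_r,
\]
where the first equality uses that $\Phi$ (being injective on the relevant part of $H^2$, as it is built from the monomorphisms $\varphi^\ast$ on $H^2$ and their inverses) commutes with unions, the second is Proposition~\ref{prop:res-mas} applied to each pair $(u,w)$ with $u\cup w=0$, and the third is the reindexing along the bijection established above; the last is again the definition \eqref{eq:u-res}, now for $\Phi(u)$.

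The substance of the statement is already contained in Proposition~\ref{prop:res-mas}, so there is no genuine obstacle here; the one point deserving care is the reindexing of the union, i.e.\ checking that the third slots admissible for $u$ are carried bijectively onto those admissible for $\Phi(u)$, which is exactly what the equivalence $u\cup w=0\Leftrightarrow\Phi(u)\cup\Phi(w)=0$ provides. I would make sure to phrase this last point so that it does not implicitly require $\Phi$ to be defined or injective beyond where it is needed.
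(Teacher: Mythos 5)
Your proposal is correct, and it is precisely the formal unwinding that the paper leaves implicit: the corollary is stated without proof as an immediate consequence of Proposition~\ref{prop:res-mas} and the definition~\eqref{eq:u-res}, and your argument (termwise application of the proposition, plus the reindexing of the union via the equivalence $u\cup w=0\Leftrightarrow\Phi(u)\cup\Phi(w)=0$, which uses injectivity of $\Phi$ on $D^2(H(A))$) supplies exactly the missing details. The only superfluous remark is the appeal to injectivity of $\Phi$ to justify commuting with unions; images under any map commute with unions.
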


\subsection{Distinguishing homotopy types}
\label{subsec:types}

Suppose now that $X$ and $Y$ are two finite $\Delta$-complexes 
such that their restricted triple Massey products cannot be matched 
as subsets of $H^2$ in the manner prescribed by 
Proposition \ref{prop:res-mas} or Corollary \ref{cor:ur}.  
Those results then imply that $X$ and $Y$ are not homotopy equivalent. 
We illustrate this method of distinguishing homotopy types 
with an example.

\begin{figure}
\begin{equation*}
\begin{gathered}
\begin{tikzpicture}[xscale=1.25, yscale=0.875]
\draw[black] (0,0) -- (2,0) -- (2,3) -- (4,3) -- (4,0) -- (10,0) 
											-- (10,4) -- (0,4) -- (0,0);
\draw [->] (0,4) -- (0,2.2);
\node at (-.3,2) {\scriptsize$x_3$};
\draw [->] (2,0) -- (1.2,0);
\node at (1,-.3) {\scriptsize$x_2$};
\node at (2.3,2) {\scriptsize$x_3$};
\draw [->] (2,3) -- (2,2.2);
\draw (3,2.9)--(3,3.1);
\draw (3,3.9)--(3,4.1);
\draw [->] (4,3) -- (4,2.2);
\node at (3.8,2) {\scriptsize$x_1$};
\node at (5,-.3) {\scriptsize$x_2$};
\draw [->] (4,0) -- (4.8,0);
\draw (6,-.1) -- (6,.1);
\node at (7,-.3) {\scriptsize$x_1^k$};
\draw [->] (6,0) -- (6.8,0);
\draw (8,-.1) -- (8,.1);
\node at (9,-.3) {\scriptsize$x_2$};
\draw [->] (10,0) -- (9.2,0);
\node at (5,4.3) {\scriptsize$x_2$};
\draw [->] (4,4) -- (4.8,4);
\draw (6,3.9) -- (6,4.1);
\node at (7,4.3) {\scriptsize$x_1^k$};
\draw [->] (6,4) -- (6.8,4);
\draw (8,3.9) -- (8,4.1);
\node at (9,4.3) {\scriptsize$x_2$};
\draw [->] (10,4) -- (9.2,4);
\draw [->] (10,4) -- (10,2.2);
\node at (10.2,2) {\scriptsize$x_1$};
\draw [->] (2,4) -- (1.2,4);
\node at (1,4.3) {\scriptsize$x_2$};
\draw [blue, line width=1pt] (0,2) -- (2,2);
\draw [blue, line width=1pt, ->] (.5,1.7) -- (.5,2.3);
\node [blue,thick] at (.5,1.5) {\scriptsize$a_3$};
\node [blue,thick] at (.5, 2.5) {\scriptsize $1$};
\draw [blue, line width=1pt] (4,2) -- (10,2);
\draw [blue, line width=1pt, ->] (4.5,2.3) -- (4.5,1.7);
\node [blue,thick] at (4.5,2.4) {\scriptsize$a_1$};
\node [blue,thick] at (4.5,1.5) {\scriptsize $1$};
\draw [blue, line width=1pt] (5,0) -- (5,4);
\draw [blue, line width=1pt, ->] (4.8,1) -- (5.2,1);
\node [blue,thick] at (4.6,1) {\scriptsize$a_2$};
\node [blue,thick] at (5.3,1) {\scriptsize $1$};

\draw [blue, line width=1pt] (7,0) -- (7,4);
\draw [blue, line width=1pt, ->] (6.8,.5) -- (7.2,.5);
\node [blue,thick] at (6.6,.5) {\scriptsize$a_1$};
\node [blue,thick] at (7.3,.5) {\scriptsize$k$};
\draw [blue, line width=1pt] (9,0) -- (9,4);
\draw [blue, line width=1pt, <-] (8.8,1) -- (9.2,1);
\node [blue,thick] at (9.4,1) {\scriptsize$a_2$};
\node [blue,thick] at (8.7,1) {\scriptsize $1$};
\draw [blue, line width=1pt] (1,0) -- (1,4);
\draw [blue, line width=1pt, <-] (.8,1) -- (1.2,1);
\node [blue,thick] at (.7,1) {\scriptsize $1$};
\node [blue,thick] at (1.4,1) {\scriptsize$a_2$};
\draw [green, line width=1pt] (5,2) [out=300,in=240] to (9,2);
\draw [green, line width=1pt, ->] (8,.9) -- (7.8,1.5);
\node [green, thick] at (8,.7) {\scriptsize{$a_{1,2}$}};
\node [green,thick] at (7.8, 1.7) {\scriptsize $1$};
\draw [line width=1pt] (9.3,3) [out=270,in=270] to (9.8,3);
\draw [line width=1pt,->] (9.5,2.87) -- (9.55,2.87);
\draw [fill] (9.55,3.15) circle [radius=0.05];;
\node at (9.55,3.4) {\scriptsize$e$};
\end{tikzpicture}
\end{gathered}
\end{equation*}
\caption{The relator $[x_2,x_3][x_1,x_2x_1^kx_2^{-1}]$}
\label{fig:distinguish}
\end{figure}
\begin{example}
\label{ex:mtp-r} 
Given a non-negative integer $k$, set $X_k$ equal to the presentation
$2$-complex for the group $G_k$ with generators $x_1, x_2, x_3$ 
and relator $[x_2,x_3][x_1,x_2 x_1^k x_2^{-1}]$ as 
pictured in Figure \ref{fig:distinguish}. Choose a subdivision 
of the $2$-cell in Figure \ref{fig:distinguish} into a $\Delta$-complex, $Y_k$,
whose cells are transverse to the line segments labelled $a_1, a_2, a_3$, 
and $a_{1,2}$. 

Then, as in Figures \ref{fig:torus} and \ref{fig:mtp-3}, the line segments 
with arrows and numbers determine elements in $C^1(Y_k;\Z)$.
The cochains determined by $a_1, a_2$, and $a_3$ are cocycles, $c_i$,
whose cohomology classes are denoted by $u_i = u_{i,k}$. 
The cocycles $c_i$ evaluated on the $1$-cycles $x_j$, are given by 
$c_i(x_j) = \delta_{i,j}$. The cochain $c_{1,2}$ determined by $a_{1,2}$ 
satisfies $dc_{1,2} = c_1 \cup c_2$. Using the computation of the cup 
product in Figure \ref{fig:torus} as a guide, it follows that
$u_1\cup u_2=u_1\cup u_3=0$, while $u_2\cup u_3$ generates 
$H^2(Y_k;\Z)=\Z$. Consequently, all the cohomology rings 
$H^*(X_k;\Z)\cong H^*(Y_k;\Z)$  are pairwise isomorphic. Furthermore, 
the triple Massey products  $\langle u_1,u_1,u_2 \rangle$  and 
$\langle u_1,u_1,u_3 \rangle$ have indeterminacy equal to 
the whole of $H^2(X_k;\Z)$; thus, these invariants do not 
distinguish the homotopy types of the spaces $X_k$, either. 

On the other hand, we can use restricted triple Massey products
to show that $X_k$ and $X_\ell$ are not of the same $2$-type 
if $k \ne \ell$.  To prove this claim, let  $g_k$  be 
a generator of $H^2(X_k;\Z) = \Z$.  We first show that 
\begin{equation}
\label{eq:u1k}
\langle u_{1,k} \rangle_r = \{ nkg_k \colon n \in \Z\}\, .
\end{equation}
Indeed, by Lemma \ref{lem:ind-res} 
the indeterminacy of $\langle u_{1,k}, u_{1,k}, w \rangle_r$ 
is zero for all $w\in H^1(X_k;\Z)$. 
Using Figure \ref{fig:distinguish} it can be shown that 
\begin{equation}
\label{eq:u1k-res}
\langle u_{1,k}, u_{1,k}, u_{2,k} \rangle_r = \pm k g_k
\end{equation}
(with sign depending on the choice of $g_k$) and 
$\langle u_{1,k}, u_{1,k}, u_{3,k} \rangle_r = 0$.
We have already seen that in general
$\langle u_{1,k}, u_{1,k}, u_{1.k}\rangle$ contains zero.
The claim now follows from formula \eqref{eq:massey-add}. 

Now suppose $X_k$ and $X_\ell$ have the same $2$-type, i.e., 
$\pi_1(X_k)\cong \pi_1(X_{\ell})$. Then, by Theorem \ref{thm:quasi-iso}, 
the cochain algebras of $X_k$ and $X_\ell$ are $1$-quasi-equivalent. 
It then follows from Corollary \ref{cor:ur} that there is an 
isomorphism 
\begin{equation}
\label{eq:iso-e}
\begin{tikzcd}[column sep=20pt]
\Phi\colon H^1(X_k;\Z) \oplus D^2H(X_k;\Z) \ar[r, "\cong"]& 
H^1(X_\ell;\Z) \oplus D^2H(X_\ell;\Z)
\end{tikzcd}
\end{equation}
such that $\Phi\left( \langle u_{1,k} \rangle_r \right) = 
\langle \Phi(u_{1,k} )\rangle_r$ where $D^2H(X_k;\Z)$
denotes the submodule of $H^2(X_k;\Z)$ whose elements
are sums of cup products of elements in $H^1(X_k;\Z)$.

Note that
the set of all elements $u \in H^1(X_k;\Z)$ with
$u w=0$ for all $w \in H^1(X_k;\Z)$ is the subspace of 
$H^1(X_k;\Z)$ generated by $u_{1,k}$.  
Since $\Phi$ commutes with cup products and is an
isomorphism on $H^2$, it follows that
$\Phi(u_{1,k}) = \pm u_{1,\ell}$. Hence,
$\Phi(\langle u_{1,k} \rangle_r)$ is either
$\langle u_{1,\ell} \rangle_r$ or
$\langle - u_{1,\ell} \rangle_r$.
Since $\z_2(-a)-\z_2(a) = a$, we see that
$\langle u_{1,\ell} \rangle_r = \langle - u_{1,\ell} \rangle_r$, 
and thus $\Phi\left( \langle u_{1,k} \rangle_r \right) = 
\langle u_{1,\ell} \rangle_r$. In view of \eqref{eq:u1k}, we 
conclude that $k=\ell$.
\end{example}

\begin{ack}
We wish to thank the referee for many valuable comments and suggestions 
that helped us to improve both the substance and the exposition of the paper.
\end{ack}

\end{document}